\documentclass[9pt, a5paper]{article}

\usepackage[english]{babel}
\usepackage[latin1]{inputenc}


\usepackage{ellipsis}		
\usepackage{xspace}			
\usepackage{exscale,relsize}		
\usepackage{ulem}			
\usepackage[hyphens]{url}
\usepackage{paralist}		
\usepackage{color,fancyvrb}		
\usepackage[force,full,safe]{textcomp} 	
\usepackage{float} 
\usepackage{fancyhdr}         
\usepackage{setspace}         


\usepackage{lmodern}
\usepackage{mathrsfs}
\usepackage{lipsum}
\usepackage{titlesec}



\usepackage[fleqn]{amsmath}		
\usepackage{amsthm}			
\usepackage{amsfonts}
\usepackage{amssymb}
\usepackage{bm,amscd} 


\usepackage{calc}
\usepackage{makeidx}
\usepackage{picinpar,graphicx}
\usepackage{tikz}
\usetikzlibrary{matrix,arrows,decorations.pathmorphing,positioning}
\usepackage[a4paper]{geometry}
\usepackage{makeidx}


\usepackage[colorlinks=true, linkcolor=blue, citecolor=brown]{hyperref}		





\newcounter{cntr}

\theoremstyle{plain}\newtheorem{thm}[cntr]{Theorem}
\theoremstyle{definition}\newtheorem{defn}[cntr]{Definition}
\theoremstyle{plain}\newtheorem{propn}[cntr]{Proposition}
\theoremstyle{plain}\newtheorem{cor}[cntr]{Corollary}
\theoremstyle{plain}\newtheorem{lem}[cntr]{Lemma}       
\theoremstyle{plain}
\theoremstyle{definition}
\theoremstyle{remark}\newtheorem{rem}[cntr]{Remark}
\theoremstyle{remark}

\newcommand\rarrow{\xrightarrow{\hspace*{0.7cm}}}

\title{On some properties of the stable cohomology of sheaves on the projective plane}
\author{Sayanta Mandal}

\titleformat{\section}{\normalfont\filcenter}{\thesection.}{12pt}{}
\setcounter{secnumdepth}{1}

\newcommand\F{\mathbb{F}_1}
\renewcommand\P{\mathbb{P}^{2}}
\newcommand\MP{\mathcal{M}_{\mathbb{P}^2, H}}
\newcommand\MF{\mathcal{M}_{\mathbb{F}_1, E+F}}
\newcommand\MFF{\mathcal{M}_{\mathbb{F}_1, F}}
\renewcommand\L{\mathbb{L}}
\newcommand\Smu{S^{\mu} (\gamma_1, \cdots, \gamma_l ; F, E+F)}
\newcommand\mb{\bar{m}}

\fancypagestyle{abcd}{
\fancyhf{}

\fancyfoot[L]{
\footnotesize
2010 \textit{Mathematics Subject Classification.} Primary: 14D20, 14J60. Secondary: 14J26, 14F45.\\
\textit{Key words and phrases.} Moduli space of sheaves, Betti numbers, stable cohomology, rational surfaces.
}}

\begin{document}
\thispagestyle{abcd}

\begin{center}
\begin{Large}
On the stabilization of the Betti numbers of the moduli space of sheaves on $\P$\\
\end{Large}
$\qquad$\\
\begin{large}
Sayanta Mandal
\end{large}
\end{center}

\paragraph{\normalfont\textsc{Abstract.}}
{\linespread{0.1}\footnotesize
Let $r \geq 2$ be an integer, and let $a$ be an integer coprime to $r$. We show that if $c_2 \geq n + \left\lfloor \frac{r-1}{2r}a^2 + \frac{1}{2}(r^2 + 1) \right\rfloor$, 
then the $2n$th Betti number of the moduli space $M_{\P,H}(r,aH,c_2)$ stabilizes, where $H = c_1(\mathcal{O}_{\P}(1))$.
}
\section{\textsc{Introduction}}
Let $X$ be a smooth projective surface over an algebraically closed field $\mathbb{K}$, and let $H$ be an ample divisor on $X$. We denote the Chern character of a torsion-free coherent  sheaf on $X$ by $\gamma = (r,c,\Delta)$, where $r$ is the rank, $c$ is the first Chern class, and $\Delta = \frac{ch_1^2 - 2\cdot r\cdot ch_2}{2r^2}$ is the discriminant. We denote by  $M_{X,H}(\gamma)$, the moduli-space parameterizing slope-$H$-semistable sheaves with Chern character $\gamma$. These spaces were constructed by Gieseker \cite{gie} and Maruyama \cite{mar}, and play a central role in many areas of mathematics including algebraic geometry, topology, representation theory, etc. For example, they are used to study linear systems on curves and in the Donaldson theory of $4$-manifolds. 

A crucial step to understand the geometry of these moduli spaces is by scrutinizing the cohomology groups associated with them. Consequently, determining the Betti numbers of these spaces are of utmost importance. In this paper, we look at the special case when $X = \P$ and $H = c_1(\mathcal{O}_{\P}(1))$. We show that \\
{}\\
\textbf{Theorem} (Theorem \ref{theorem26})\textbf{.} \textit{Assume that the rank $r$ and the first Chern class $aH$ are coprime. If $c_2 \geq N + \left\lfloor \frac{r-1}{2r}a^2 + \frac{1}{2}(r^2 + 1) \right\rfloor $, then the $2N$th Betti numbers of the moduli space $M_{\P,H}(r,aH,c_2)$ stabilizes.}\\

The general philosophy of Donaldson, Gieseker and Li is that the geometry of the moduli space $M_{X,H}(\gamma)$ behaves better as $\Delta$ tends to infinity. O'Grady \cite{ogr} showed that $M_{X,H}(\gamma)$ is irreducible and generically smooth if $\Delta$ is sufficiently large. Li \cite{li} showed the stabilization of the first and the second Betti numbers of $M_{X,H}(\gamma)$ when the rank is two.  When the rank is one, the moduli space $M_{X,H}(1,c,\Delta)$ is isomorphic to $Pic^c(X) \times X^{[\Delta]}$, where $X^{[n]}$ denotes the Hilbert scheme of $n$ points in $X$. The Betti numbers of $X^{[n]}$ were computed by G{\"{o}}ttsche \cite{got90}. Using the K{\"{u}}nneth formula, Coskun and Woolf \cite{cos}[Proposition 3.3] showed that the Betti numbers of $M_{X,H}(1,c,\Delta)$ stabilizes as $\Delta $ tends to infinity. In general, we don't know much about the Betti numbers of $M_{X,H}(\gamma)$. 

Yoshioka \cite{yos95}, \cite{yos} and G{\"{o}}ttsche \cite{got96} computed the Betti and Hodge numbers of $M_{X,H}(\gamma)$ when $X$ is a ruled surface and the rank is two. Yoshioka \cite{yos95}, \cite{yos96a} observed the stabilization of the Betti numbers for rank two bundles on ruled surfaces. G{\"{o}}ttsche \cite{got99} extended his results to rank two bundles on rational surfaces with polarizations which are $K_X$-negative. The stabilization of the Betti numbers is known for smooth moduli space of sheaves on $K3$ surfaces. By works of Mukai \cite{muk}, Huybrechts \cite{huy03}, and Yoshioka \cite{yos99}, smooth moduli spaces of sheaves on a  $K3$ surface $X$ are deformations of the Hilbert scheme of points on $X$ of the same dimension. In particular, they are diffeomorphic to the Hilbert scheme of points, and hence, their Betti numbers stabilizes. Yoshioka \cite{yos01} obtained similar results for moduli spaces of sheaves on abelian surfaces. A smooth moduli space of sheaves $M_{X,H}(\gamma)$ on an abelian surface $X$ is deformation equivalent to the product of the dual abelian surface of $X$ and a Hilbert scheme of points on $X$. Consequently, the Betti numbers stabilizes.

In the special case when $X = \P$ and $H = c_1(\mathcal{O}_{\P}(1))$, Yoshioka \cite{yos94},\cite{yos} computed the Betti numbers of $M_{\P,H}(2, -H,\Delta)$ and showed that they stabilizes as $\Delta$ tends to infinity. Manschot \cite{man11},\cite{man} computed the Betti numbers of $M_{\P,H}(3, -H, \Delta)$, and later, building on the work of Mozgovoy \cite{moz}, produced a formula to determine the Betti numbers for any rank and computed them in case of rank four. By looking at the tables present in the papers of Yoshioka and Manschot, one would expect the Betti numbers to stabilize as $\Delta$ tends to infinity, for any given rank and first Chern class. Coskun and Woolf \cite{cos} showed that this is indeed the case, and furthermore, they determined the generating function for the stable Betti numbers. Our goal in this paper is to produce lower bounds for the Betti numbers to become stable, and since we know the generating function for the stable Betti numbers, we can determine the Betti numbers for a large collection of moduli spaces.




\paragraph{Organization of the paper.} In section \ref{section2}, we set-up the notation and  review some basic facts on slope-semistable sheaves and their moduli space. In section \ref{section3}, we look at the Betti numbers of the moduli space of rank one sheaves on $\P$. In section \ref{section4}, we determine lower bounds for vanishing of the coefficient of $\L^{-N}q^{\Delta}$ in the generating function $\tilde{G}_{r,\tilde{c}}(q)$ (see equation \ref{generatingfndefn}) defined over the ring $A^-$ (see equation \ref{ringdefn}). In section \ref{section5}, we determine lower bounds for vanishing of the coefficient of $\L^{-N}q^{\Delta}$ in the generating function $G_{r,c}(q)$ (see equation \ref{generatingfndefn}). In section \ref{section6}, we determine lower bounds for $c_2$ for the stabilization of the Betti numbers of the moduli space $M_{\P,H}(r,aH,c_2)$.

\paragraph{Acknowledgements.} I am extremely grateful to my advisor Prof. Izzet Coskun for invaluable mathematical discussions, correspondences, and several helpful suggestions.

\section{\textsc{Preliminaries}}\label{section2}
Let $X$ be a smooth projective surface over an algebraically closed field $\mathbb{K}$ of characteristic zero, and let $H$ be an ample divisor on $X$. Throughout this paper, we are going to assume that all sheaves are coherent and torsion free. Given a sheaf $\mathcal{F}$, we define the $H$\textit{-slope} of $\mathcal{F}$ as 
\[ \mu_H(\mathcal{F}) = \frac{ch_1(\mathcal{F})\cdot H}{ch_0(\mathcal{F}) \cdot H^2} \]
Additionally, we define the \textit{Chern character} of $\mathcal{F}$ as $\gamma = (r,c,\Delta)$ where $r$ is the rank, $c$ is the first Chern class, and $\Delta$ is the discriminant defined as 
\begin{equation}
\label{deltadefn}
\Delta(\mathcal{F}) = \frac{ch_1(\mathcal{F})^2 - 2 ch_0(\mathcal{F}) ch_2(\mathcal{F})}{2 ch_0(\mathcal{F})^2}
\end{equation}
We define a sheaf $\mathcal{F}$ to be $\mu_H$\textit{-semistable} if for every proper subsheaf $\mathcal{E}$, we have $\mu_H(\mathcal{E}) \leq \mu_H(\mathcal{F})$. Likewise, we define a sheaf $\mathcal{F}$ to be $\mu_H$\textit{-stable} if the inequality is strict. Every $\mu_H$-semistable sheaf has a Jordan H{\"{o}}lder filtration with the sub-quotients being $\mu_H$-stable \cite{huy}[Proposition 1.5.2]. We say two $\mu_H$-semistable sheaves are \textit{S-equivalent} if the corresponding direct sum of subquotients appearing in the Jordan H{\"{o}}lder filtration are isomorphic.

Given a Chern character $\gamma = (r,c,\Delta)$, we denote by $M_{X,H}(\gamma)$ the moduli space of S-equivalence classes of $\mu_H$-semistable sheaves with Chern character $\gamma$. We denote by $\mathcal{M}_{X,H}(\gamma)$ the moduli stack of $\mu_H$-semistable sheaves with Chern character $\gamma$. When $X$ is smooth projective surface and $H$ is ample divisor with $K_X \cdot H <0$, the moduli space $M_{X,H}(\gamma)$ is smooth at every stable sheaf $\mathcal{F}$ because $ext^2(\mathcal{F},\mathcal{F}) = hom(\mathcal{F},\mathcal{F} \otimes K_X) = 0$. Consequently, if all $\mu_H$-semistable sheaves with Chern character $\gamma$ are $\mu_H$-stable, then $M_{X,H}(\gamma)$ is a smooth projective variety of dimension $ext^1(\gamma,\gamma) = 1 - \chi(\gamma,\gamma)$.

Assume that $M_{X,H}(\gamma)$ is smooth, e.g. when $r \cdot H^2$ and $c \cdot H$ are coprime. To understand the Betti numbers of $M_{X,H}(\gamma)$, we look at the polynomial 
\[ P_{M_{X,H}(\gamma)}(t) = \sum_{i=0}^{2(1 - \chi(\gamma,\gamma))} b_i(M_{X,H}(\gamma)) t^i \]

In general, consider a collection of polynomials $P_d(t) = \sum_{i=0}^{s_d} a_{i,d}t^i$ indexed by integers $d \geq N$, for some integer $N$. We look at the corresponding collection of shifted polynomials $\tilde{P}_d(t) = \sum_{j=-s_d}^0 b_{j,d}t^j$, where $b_{j,d} = a_{j+s_d,d}$. 
\begin{defn}\label{definition1}
We say that the collection of polynomials $P_d(t)$ \textit{stabilize} if for each $j$ there exists an integer $d_0(j)$ such that for all $d \geq d_0(j)$ we have $b_{j,d} = b_{j,d+1}$. In this case, we define the \textit{stable limit} to be $\tilde{P}_\infty (t) = \sum_{j=-\infty}^0 \beta_j t^j$, where $\beta_j = b_{j,d}$ for any $d \geq d_0(j)$. 
\end{defn}
In our case, we fix $r$ and $c$ and  look at the collection of polynomials $P_{M_{X,H}(r,c,\Delta)}$ for $\Delta \geq 0$. If this collection of polynomials stabilize, we say that the Betti numbers of $M_{X,H}(r,c,\Delta)$ \textit{stabilize}.

Consider the generating function 
\begin{equation}\label{defnFtilde}
\tilde{F}(q,t) = \sum_{d=N}^\infty \tilde{P}_d(t) q^d 
\end{equation}
We have 
\begin{propn}[\cite{cos}, Proposition 3.1]\label{proposition1}
The polynomials $P_d(t)$ stabilize iff the coefficient of $t^i$ in $(1-q)\tilde{F}(q,t)$ is a Laurent polynomial in $q$. Moreover, if the polynomials stabilize, the stable limit is obtained by evaluating $(1-q)\tilde{F}(q,t)$ at $q=1$.
\end{propn}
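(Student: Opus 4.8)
The plan is to prove the statement coefficient-by-coefficient in the variable $t$, which reduces it to an elementary fact about power series in the single variable $q$. Fix an integer $j$. If $j>0$ then the coefficient of $t^j$ in every $\tilde{P}_d(t)$ is zero, so the coefficient of $t^j$ in $(1-q)\tilde{F}(q,t)$ is the zero Laurent polynomial and the corresponding coefficient of the stable limit is also zero; thus the whole content lies in the range $j\le 0$. For $j\le 0$, I adopt the convention $b_{j,d}=0$ whenever $j<-s_d$, so that
\[ \phi_j(q) \;=\; \sum_{d=N}^{\infty} b_{j,d}\,q^{d} \]
is well defined for each fixed $j$ and equals the coefficient of $t^j$ in $\tilde{F}(q,t)$ (this is just interchanging the two finite/infinite sums in the definition \eqref{defnFtilde} of $\tilde{F}$).

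Next I would record the telescoping identity
\[ (1-q)\,\phi_j(q) \;=\; b_{j,N}\,q^{N} \;+\; \sum_{d=N+1}^{\infty}\bigl(b_{j,d}-b_{j,d-1}\bigr)\,q^{d}. \]
From this the first assertion is immediate: $(1-q)\phi_j(q)$ is a Laurent polynomial in $q$ if and only if all but finitely many of the differences $b_{j,d}-b_{j,d-1}$ vanish, i.e. if and only if the sequence $(b_{j,d})_{d\ge N}$ is eventually constant, i.e. (by Definition \ref{definition1}) if and only if there is an integer $d_0(j)$ with $b_{j,d}=b_{j,d+1}$ for all $d\ge d_0(j)$. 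Taking the conjunction over all $j\le 0$ (together with the trivial case $j>0$) gives precisely: the coefficient of $t^i$ in $(1-q)\tilde{F}(q,t)$ is a Laurent polynomial in $q$ for every $i$ if and only if the polynomials $P_d(t)$ stabilize.

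For the statement about the stable limit, suppose the $P_d(t)$ stabilize. Then for each $j\le 0$ the series above has only finitely many nonzero terms, so $(1-q)\phi_j(q)$ is a genuine (Laurent) polynomial and may legitimately be evaluated at $q=1$, where the finite telescoping sum collapses:
\[ \bigl[(1-q)\phi_j(q)\bigr]_{q=1} \;=\; b_{j,N} + \sum_{d=N+1}^{\infty}\bigl(b_{j,d}-b_{j,d-1}\bigr) \;=\; \lim_{d\to\infty} b_{j,d} \;=\; b_{j,d_0(j)} \;=\; \beta_j, \]
which is exactly the coefficient of $t^j$ in $\tilde{P}_\infty(t)$. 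Since this holds for every $j$, evaluating $(1-q)\tilde{F}(q,t)$ at $q=1$ yields $\tilde{P}_\infty(t)$, as claimed.

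I do not expect a genuine obstacle here; the argument is bookkeeping with formal power series. The only points deserving care are (i) handling the a priori varying top degree $s_d$ via the convention $b_{j,d}=0$ for $j<-s_d$, so that $\phi_j(q)$ makes sense for each fixed $j$; (ii) noting that the two-variable statement is literally the conjunction over all $j$ of the one-variable statements; and (iii) observing that the substitution $q=1$ is licensed precisely because, under the stabilization hypothesis, $(1-q)\phi_j(q)$ has been shown to be a polynomial and not merely a formal power series.
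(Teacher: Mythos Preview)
Your proof is correct and follows essentially the same approach as the paper: the paper reduces the proposition to Lemma~\ref{lemma3}, whose proof sets $b_{j,d}=0$ for $j<-s_d$ and computes $(1-q)\tilde{F}(q,t)=\sum_{d\ge N,\,j\le 0}(b_{j,d}-b_{j,d-1})t^jq^d$, which is exactly your telescoping identity read coefficient-by-coefficient in $t$. Your write-up is in fact more complete than the paper's, since you also spell out the evaluation at $q=1$ via the collapsing telescope, which the paper leaves implicit.
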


The proof of Proposition \ref{proposition1} due to Coskun and Woolf \cite{cos}  essentially follows from the following Lemma.
\begin{lem}\label{lemma3}
For any $j\geq 0$, the coefficient of $t^{-j}q^d$ in $(1-q)\tilde{F}(q,t)$ is zero for $d \geq d_0(j)$ iff $b_{j,d} = b_{j,d+1}$ for all $d \geq d_0(j) -1$.
\end{lem}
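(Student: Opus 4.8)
The plan is to compute $(1-q)\tilde{F}(q,t)$ by a single telescoping manipulation and then simply read off the coefficient of the monomial $t^{-j}q^d$. From the definition \eqref{defnFtilde},
\[
(1-q)\tilde{F}(q,t) \;=\; \sum_{d=N}^{\infty}\tilde{P}_d(t)\,q^d \;-\; \sum_{d=N}^{\infty}\tilde{P}_d(t)\,q^{d+1};
\]
reindexing the second sum by $d\mapsto d-1$ and peeling off the lowest-order term yields
\[
(1-q)\tilde{F}(q,t) \;=\; \tilde{P}_N(t)\,q^N \;+\; \sum_{d=N+1}^{\infty}\bigl(\tilde{P}_d(t)-\tilde{P}_{d-1}(t)\bigr)q^d .
\]
Writing $b_{-j,d}$ for the coefficient of $t^{-j}$ in $\tilde{P}_d(t)$ as in Definition \ref{definition1}, with the convention $b_{-j,d}=0$ whenever $j>s_d$ so that the various $\tilde{P}_d$ can be compared coefficientwise, one reads off from this expression that for every $d\ge N+1$ the coefficient of $t^{-j}q^d$ in $(1-q)\tilde{F}(q,t)$ equals $b_{-j,d}-b_{-j,d-1}$.

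With this identity in hand, both directions are immediate. Replacing $d_0(j)$ by $\max\{d_0(j),\,N+1\}$ if necessary — which alters neither the hypothesis nor the conclusion of the statement — we may assume $d_0(j)\ge N+1$. Then the coefficient of $t^{-j}q^d$ in $(1-q)\tilde{F}(q,t)$ vanishes for every $d\ge d_0(j)$ if and only if $b_{-j,d}=b_{-j,d-1}$ for every $d\ge d_0(j)$, which is the same as saying $b_{-j,d}=b_{-j,d+1}$ for every $d\ge d_0(j)-1$; this is exactly the asserted stabilization of the coefficient.

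There is no genuine obstacle here: the argument is pure bookkeeping. The only two points that warrant a word of care are the boundary contribution $\tilde{P}_N(t)\,q^N$, which is why one normalizes $d_0(j)$ to exceed $N$, and the fact that the degrees $s_d$ — and hence the ranges of exponents occurring in $\tilde{P}_d$ — vary with $d$; the latter is dispatched once and for all by the convention that $b_{-j,d}=0$ for exponents not appearing in $\tilde{P}_d$, after which each $\tilde{P}_d$ is a Laurent polynomial supported in non-positive exponents and the differences $\tilde{P}_d-\tilde{P}_{d-1}$ make sense coefficientwise.
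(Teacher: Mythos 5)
Your proof is correct and is essentially the paper's own argument: expand $(1-q)\tilde{F}(q,t)$, compare coefficients to see that the coefficient of $t^{-j}q^d$ is the difference of consecutive $b$'s (the paper does this with the convention $b_{j,d}=0$ for $j<-s_d$, exactly as you do), and read off the equivalence. Your extra care about the boundary term $\tilde{P}_N(t)q^N$ is a harmless refinement of the same bookkeeping.
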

\begin{proof}
Let us define $b_{j,d}=0$ for $j < -s_d$. It follows from equation \ref{defnFtilde} that \[ \tilde{F}(q,t) = \sum_{d \geq N, \, j \leq 0} b_{j,d}t^j q^d \] whence, 
\[ (1-q)\tilde{F}(q,t) = \sum_{d \geq N, \, j \leq 0} (b_{j,d} - b_{j,d-1})t^jq^d\]
\end{proof}

Additionally, let 
\[ F(q,t) = \sum_{d=N}^\infty P_d(t) q^d \]
and assume that the polynomials $P_d(t)$ satisfy Poincar{\'{e}} duality i.e. $t^{s_d}P_d(t^{-1}) = P_d(t)$ for $d \gg 0$, then we have 
\begin{cor}[\cite{cos}, Corollary 3.2]
The polynomials $P_d(t)$ stabilize iff the coefficient of $t^i$ in $(1-q)F(q,t)$ is a Laurent polynomial in $q$, and in this case, we get the generating function for the stable coefficients by evaluating $(1-q)F(q,t)$ at $q=1$.
\end{cor}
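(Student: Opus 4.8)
The plan is to deduce the Corollary from Proposition~\ref{proposition1} by relating $F(q,t)$ to $\tilde F(q,t)$ through Poincar\'e duality. First I would unwind the definitions: since $b_{j,d}=a_{j+s_d,d}$, reindexing gives $\tilde P_d(t)=\sum_{j=-s_d}^{0}a_{j+s_d,d}t^j=t^{-s_d}P_d(t)$. Combining this with the hypothesis $t^{s_d}P_d(t^{-1})=P_d(t)$, which holds for all $d\gg 0$, yields $\tilde P_d(t)=P_d(t^{-1})$ for every sufficiently large $d$. Summing over $d$ then gives
\[ \tilde F(q,t)=F(q,t^{-1})+R(q,t), \]
where $R(q,t)=\sum_{d}\big(\tilde P_d(t)-P_d(t^{-1})\big)q^d$ is supported on the finitely many $d$ for which Poincar\'e duality may fail, and hence is a polynomial in $q$ with Laurent-polynomial coefficients in $t$.

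Next I would multiply by $(1-q)$ and compare coefficients of powers of $t$. For each integer $i$, the coefficient of $t^i$ in $(1-q)F(q,t^{-1})$ equals the coefficient of $t^{-i}$ in $(1-q)F(q,t)$; since $i\mapsto -i$ is a bijection of $\mathbb{Z}$, the assertion that the $t^i$-coefficient of $(1-q)F(q,t)$ is a Laurent polynomial in $q$ for every $i$ is equivalent to the analogous assertion for $(1-q)F(q,t^{-1})$. The term $(1-q)R(q,t)$ has every $t^i$-coefficient an honest polynomial in $q$, so it cannot affect Laurent-polynomiality. Therefore the $t^i$-coefficient of $(1-q)F(q,t)$ is a Laurent polynomial in $q$ for all $i$ if and only if the $t^j$-coefficient of $(1-q)\tilde F(q,t)$ is a Laurent polynomial in $q$ for all $j$, and by Proposition~\ref{proposition1} (ultimately by Lemma~\ref{lemma3}) this is equivalent to the stabilization of the polynomials $P_d(t)$.

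For the last assertion, Proposition~\ref{proposition1} identifies the stable limit as $\tilde P_\infty(t)=\big[(1-q)\tilde F(q,t)\big]_{q=1}$. Because $(1-q)R(q,t)$ vanishes at $q=1$, evaluating the identity above at $q=1$ gives $\tilde P_\infty(t)=\big[(1-q)F(q,t^{-1})\big]_{q=1}$, equivalently $\big[(1-q)F(q,t)\big]_{q=1}=\tilde P_\infty(t^{-1})=\sum_{j\le 0}\beta_j t^{-j}$, which is the generating function recording the stable coefficients in ascending degree, as claimed.

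All steps are routine manipulations of generating functions. The only point that needs a moment's attention is the contribution of the finitely many small $d$ at which Poincar\'e duality is not assumed; this is precisely what the correction term $R(q,t)$ absorbs, and it is harmless both because it is polynomial in $q$ (so preserves Laurent-polynomiality of coefficients) and because it is annihilated by the factor $(1-q)$ upon specializing $q=1$. I do not foresee any genuine obstacle.
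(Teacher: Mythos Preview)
The paper does not supply its own proof of this corollary; it is quoted from Coskun and Woolf \cite{cos} without argument. Your deduction is correct and is exactly the natural one: Poincar\'e duality gives $\tilde P_d(t)=t^{-s_d}P_d(t)=P_d(t^{-1})$ for $d\gg 0$, so $\tilde F(q,t)$ and $F(q,t^{-1})$ differ by a term that is polynomial in $q$, which neither affects Laurent-polynomiality of the $t^i$-coefficients nor survives evaluation at $q=1$ after multiplication by $(1-q)$. There is nothing to compare against, and nothing to fix.
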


Let $K_0(var_k)$ denote the Grothendieck ring of varieties over the field $k$ of characteristic zero. The Poincar{\'{e}} polynomials for smooth varieties induces \cite{joy07} the \textit{virtual Poincar{\'{e}} polynomial} map 
\[ P(t) : K_0(var_k) \rarrow \mathbb{Z}[t] \]
Let $\L$ denote the class $[\mathbb{A}^1]$ in $K_0(var_k)$. Consider the ring $R = K_0(var_k)[\L^{-1}]$. We have a $\mathbb{Z}$-graded filtration $\mathfrak{F}$ on $R$, where for any given variety $Y$, we have 
\[ [Y]\L^{a} \in \mathfrak{F}^i \quad\text{ iff } \quad dim(Y) + a \leq -i \]
We define the ring $A^{-}$ to be the inverse limit 
\begin{equation}\label{ringdefn}
A^- := \varprojlim_{i \geq 0} R/(\mathfrak{F}^i \otimes_{\mathfrak{F}^0} R) 
\end{equation}
Our notion of dimension extends from $K_0(var_k)$ to $A^-$. Similarly, the virtual Poincar{\'{e}} polynomial extends to $R$ and $A^-$ where it takes values in $\mathbb{Z}[t,t^{-1}]$ and $\mathbb{Z}((t^{-1}))$ respectively. 
\begin{defn}
We say that a sequence of elements $a_i$ in $A^-$ for $i \geq 0$ \textit{stabilize} to $a$ iff the sequence $a_i \L^{-dim(a_i)}$ converges to $a$. 
\end{defn}

Given any smooth projective variety $Y$ of dimension $d$, it follows from Poincar{\'{e}} duality that we have 
\begin{equation}\label{poincaredualeqn}
P_{[Y]}(t) = t^{2d}P_{[Y]}(t^{-1}) = P_{[Y]\L^{-d}}(t^{-1}) 
\end{equation}
Therefore, we have 
\begin{lem}\label{lemma6} Given a collection of smooth projective varieties $[X_i]$ of dimension $d_i$, if they stabilize in $A^-$ then their respective  Poincar{\'{e}} polynomials also stabilize.
\end{lem}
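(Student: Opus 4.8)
The plan is to push everything through the virtual Poincar\'e polynomial map $P$, and to exploit its continuity together with the Poincar\'e duality identity (\ref{poincaredualeqn}).

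First I would unwind the hypothesis. To say that the smooth projective varieties $[X_i]$ of dimension $d_i$ stabilize in $A^-$ is exactly to say that the sequence $[X_i]\L^{-d_i}$ converges in $A^-$, say to a class $a$. Since $X_i$ is smooth, projective and of dimension $d_i$, its Poincar\'e polynomial $P_{[X_i]}(t) = \sum_{k=0}^{2d_i} a_{k,i}t^k$ has degree exactly $s_i := 2d_i$ (its leading coefficient is the number of top-dimensional components, which is nonzero), so the associated shifted polynomial of Definition \ref{definition1} is $\tilde P_{X_i}(t) = \sum_{j=-2d_i}^{0} b_{j,i}t^j$ with $b_{j,i} = a_{j+2d_i,i}$. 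By (\ref{poincaredualeqn}) this shifted polynomial is nothing but the virtual Poincar\'e polynomial of the class $[X_i]\L^{-d_i}$, that is, $\tilde P_{X_i}(t) = P_{[X_i]\L^{-d_i}}(t)$. Thus the lemma reduces to the following assertion: if $[X_i]\L^{-d_i} \to a$ in $A^-$, then the Laurent polynomials $P_{[X_i]\L^{-d_i}}(t)$ converge coefficientwise in $\mathbb{Z}((t^{-1}))$; for this is precisely the statement that the $\tilde P_{X_i}$, hence the $P_{X_i}$, stabilize in the sense of Definition \ref{definition1}, with stable limit $\tilde P_\infty(t) = P_a(t)$.

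The heart of the matter is then the continuity of $P \colon A^- \to \mathbb{Z}((t^{-1}))$, where $\mathbb{Z}((t^{-1}))$ is given the topology with fundamental system of neighbourhoods of $0$ the subgroups $t^{-m}\mathbb{Z}[[t^{-1}]]$. To see this I would record the elementary degree estimate on the filtration: if $[Y]\L^a$ is one of the generators of $\mathfrak{F}^i$, so that $\dim Y + a \leq -i$, then $P_{[Y]\L^a}(t) = t^{2a}P_{[Y]}(t)$ has degree $2a + \deg P_{[Y]}(t) \leq 2a + 2\dim Y \leq -2i$; summing over generators gives $P(\mathfrak{F}^i) \subseteq t^{-2i}\mathbb{Z}[t,t^{-1}] \subseteq t^{-2i}\mathbb{Z}[[t^{-1}]]$, and this estimate is inherited by the quotients $R/(\mathfrak{F}^i \otimes_{\mathfrak{F}^0} R)$ and by the inverse limit, so that $P$ carries the $i$-th filtration level of $A^-$ into the $(2i)$-th neighbourhood of $0$. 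Concretely: for each $N$, convergence in $A^-$ gives $[X_i]\L^{-d_i} \equiv a$ modulo the $N$-th filtration level for all $i$ large, whence $P_{[X_i]\L^{-d_i}}(t)$ and $P_a(t)$ agree in every degree $> -2N$. Letting $N \to \infty$, for each fixed $j \leq 0$ the coefficient of $t^j$ in $P_{[X_i]\L^{-d_i}}(t) = \tilde P_{X_i}(t)$, namely $b_{j,i}$, is independent of $i$ once $i$ is large; in particular $b_{j,i} = b_{j,i+1}$ for all $i \geq i_0(j)$, which is the defining property of stabilization.

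I expect the only genuine subtlety --- and so the step to carry out with care --- to be the bookkeeping that matches the intrinsic notion of convergence in $A^-$ (rescaling by $\L^{-\dim}$, and the filtration $\mathfrak{F}$) with the combinatorial shift by $s_d$ of Definition \ref{definition1}. This is exactly where smoothness and projectivity of the $X_i$ are needed --- to pin down $s_i = 2d_i$ and to supply the duality symmetry (\ref{poincaredualeqn}) --- and where one must check that $P$ respects the two filtrations as above. Once these identifications are in place the lemma is immediate; beyond the elementary bound $\deg P_{[Y]}(t) \leq 2\dim Y$ no estimate is required.
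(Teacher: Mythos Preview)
Your proposal is correct and takes essentially the same approach as the paper: the paper does not give a separate proof of this lemma but simply writes ``Therefore, we have'' after noting that the virtual Poincar\'e polynomial extends to $A^-$ with values in $\mathbb{Z}((t^{-1}))$ and recording the Poincar\'e duality identity~(\ref{poincaredualeqn}). You have filled in exactly the details this ``Therefore'' leaves implicit---the identification $\tilde P_{X_i}(t)=P_{[X_i]\L^{-d_i}}(t)$ and the filtration-compatibility of $P$---so your argument is a correct expansion of the paper's intended one.
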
 
Moreover, we know 
\begin{propn}[\cite{cos}, Proposition 3.6]\label{proposition6}
A sequence of elements $a_i \in A^-$ for $i \geq 0$ converges to $a$ iff the generating function $(1-q) \sum_{i\geq 0} a_i q^i$ is convergent at $q=1$, and in this case, evaluating the generating function $(1-q)\sum_{i\geq 0} a_i q^i$ at $q=1$ yields $a$. 
\end{propn}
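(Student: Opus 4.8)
The plan is to unwind the definition of the generating function and observe that, after collecting powers of $q$ and formally specializing $q=1$, the partial sums of $(1-q)\sum_{i\geq 0} a_i q^i$ are exactly the terms $a_n$. Setting $a_{-1}=0$, one has the telescoping identity
\[ (1-q)\sum_{i\geq 0} a_i q^i \;=\; \sum_{n\geq 0}(a_n - a_{n-1})\,q^n \]
of formal power series with coefficients in $A^-$. By ``convergent at $q=1$'' we mean that the series $\sum_{n\geq 0}(a_n-a_{n-1})$ converges in $A^-$, and its $N$-th partial sum is $\sum_{n=0}^N(a_n-a_{n-1}) = a_N$. So ``convergence of $(1-q)\sum_i a_i q^i$ at $q=1$'' is literally the statement that the sequence $(a_N)_{N\geq 0}$ converges in $A^-$, with value $\lim_N a_N$; both directions of the proposition, and the identification of the limit with $a$, then fall out once the topology on $A^-$ is made explicit.

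First I would record that topology. Since $A^- = \varprojlim_{i\geq 0} R/(\mathfrak{F}^i\otimes_{\mathfrak{F}^0} R)$ is an inverse limit of discrete rings, it is complete and Hausdorff, with a base of neighborhoods of $0$ furnished by the images of the filtration pieces $\mathfrak{F}^i$; in particular a sequence in $A^-$ converges iff it is Cauchy, and limits are unique. Concretely $b_n\to b$ means: for every $i$ there is $n_0$ such that $b_n - b$ lies in the $i$-th filtration subgroup of $A^-$ for all $n\geq n_0$.

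Next I would push the two implications through this description, using the identity $\sum_{n=N'+1}^{N}(a_n-a_{n-1}) = a_N - a_{N'}$ for $N>N'$. For the forward direction, if $a_N\to a$ then $(a_N)$ is Cauchy, so the partial sums of $\sum_n(a_n-a_{n-1})$ are Cauchy (their differences being the $a_N - a_{N'}$), hence by completeness the series converges, and its limit is $\lim_N a_N = a$. For the converse, if $(1-q)\sum_i a_i q^i$ converges at $q=1$ then the partial sums of $\sum_n(a_n-a_{n-1})$ converge; but that sequence of partial sums is exactly $(a_N)$, so $(a_N)$ converges, and by uniqueness its limit equals the value at $q=1$, which we name $a$.

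I do not anticipate a genuine obstacle: the argument is pure bookkeeping. The only points deserving care are fixing the precise meaning of ``convergent at $q=1$'' for a power series over the completed ring $A^-$ --- namely, convergence of the associated series of $q$-coefficients --- and invoking completeness and the Hausdorff property of $A^-$ so that Cauchy sequences converge and limits are unique. With those in hand the telescoping computation does everything; combined with the $\L^{-\dim}$ normalization built into the definition of stabilization, this is exactly what is needed to pass from convergence in $A^-$ to stabilization of Poincar{\'{e}} polynomials as in Lemma~\ref{lemma6}.
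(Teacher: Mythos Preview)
The paper does not supply its own proof of this proposition; it is quoted without argument from \cite{cos}, so there is nothing in the text to compare your attempt against directly.

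That said, your argument is correct and is the natural one. The telescoping identity $(1-q)\sum_{i\geq 0} a_i q^i = \sum_{n\geq 0}(a_n - a_{n-1})q^n$ with $a_{-1}=0$ is valid as an identity of formal power series over $A^-$, and the $N$-th partial sum at $q=1$ is indeed $a_N$; completeness and the Hausdorff property of the inverse-limit topology on $A^-$ then give both implications and the identification of the limit. One small remark: your final sentence about the $\L^{-\dim}$ normalization and Lemma~\ref{lemma6} is extraneous to the proposition as stated, which concerns plain convergence in $A^-$ rather than stabilization; it does no harm, but you could drop it without loss.
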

In particular, we see that 
\begin{rem}\label{remark8}
If for all $N \geq 0$, there exists $\Delta_0(N) > 0$ such that the coefficient of $\L^{-N}q^{\Delta}$ in $(1-q)\sum_{i\geq 0} [X_i]\L^{-d_i}q^i$ is zero, then for all $N \geq 0$ the coefficient of $\L^{-N}$ in $(1-q)\sum_{i\geq 0} [X_i]\L^{-d_i}q^i$ is a Laurent polynomial of $q$ of degree at most $\Delta_0(N)$. As a result, it follows from Proposition \ref{proposition6} that the generating function $(1-q)\sum_i [X_i]\L^{-d_i}q^i$ is convergent at $q=1$, whence Lemma \ref{lemma6} yields the Poincar{\'{e}} polynomials of $[X_i]$ also stabilize. Consequently, it follows from equation \ref{poincaredualeqn}, Lemma \ref{lemma3}, and definition \ref{definition1} that the $2N$th Betti number of $X_\Delta$ stabilize when $\Delta \geq \Delta_0(N)-1$.
\end{rem}

Let $\F \rarrow \P$ be blow-up of $\P$ at a point $p$. Let $E$ be the exceptional divisor and let $F$ be the fiber class. We are going to look at the moduli stacks $\MP(r,c,\Delta)$ and $\MF(r,\tilde{c}, \tilde{\Delta})$ where $\gamma = (r,c,\Delta)$ is Chern character on $\P$ and $\tilde{\gamma} = (r,\tilde{c}, \tilde{\Delta})$ is Chern character on $\F$. We define generating functions 
\begin{equation}\label{generatingfndefn}
\begin{aligned}
& \qquad \qquad G_{r,c} (q) = \sum_{\Delta \geq 0} [\MP(r,c,\Delta)] \L^{r^2(1 - 2 \Delta)} q^{r\Delta} \\
& \text{and }\\
& \qquad \qquad \tilde{G}_{r,\tilde{c}}(q) = \sum_{\tilde{\Delta} \geq 0} [\MF(r,\tilde{c}, \tilde{\Delta})] \L^{r^2(1 - 2 \tilde{\Delta})} q^{r \tilde{\Delta}} 
\end{aligned}
\end{equation}
Coskun and Woolf have shown that  
\begin{thm}[\cite{cos}, Theorem 5.4, Corollary 5.5]
The generating function $(1-q)G_{r,c}(q)$ converges at $q=1$ to $\prod_{i=1}^\infty \frac{1}{(1 - \L^{-i})^3}$. Similarly, the generating function $(1-q)G_{r,\tilde{c}}(q)$ converges at $q=1$ to $\prod_{i=1}^\infty \frac{1}{(1 - \L^{-i})^4}$.
\end{thm}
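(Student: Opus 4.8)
The plan is to treat the two statements together: first establish the $\F$-case by reducing, through a chain of comparisons, to G\"ottsche's formula for the Hilbert schemes $\F^{[n]}$, and then transfer the answer to $\P$ via the blow-down $\pi\colon\F\to\P$ (recall $\pi^{*}H=E+F$, so that $\MF=\mathcal{M}_{\F,\pi^{*}H}$). By Proposition~\ref{proposition6} and Remark~\ref{remark8}, convergence at $q=1$ and the value there are detected by the assertion: for every $N$ the coefficient of $\L^{-N}q^{r\tilde\Delta}$ in $(1-q)\tilde G_{r,\tilde c}(q)$ vanishes once $\tilde\Delta$ is large relative to $N$, and likewise on $\P$; note that the normalising power $\L^{r^{2}(1-2\tilde\Delta)}$ equals $\L^{-\dim\MF(r,\tilde c,\tilde\Delta)}$, since $\chi(\mathcal E,\mathcal E)=r^{2}(1-2\tilde\Delta)$ on any rational surface.

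\textbf{Step 1 (wall-crossing on $\F$).} Inside the two-dimensional ample cone of $\F$ I would move the polarisation along a path from $E+F$ to the boundary ray $\mathbb R_{\geq0}F$; it crosses finitely many walls, and at each wall $[\MF(r,\tilde c,\tilde\Delta)]$ changes by a sum over wall-crossing types of strata of the form $\Smu$ --- affine-space bundles over products $\prod_{i}\MF(\gamma_i)$ of lower-rank semistable stacks, twisted by powers of $\L$ recording the Euler pairings $\chi(\gamma_j,\gamma_i)$. The crucial input is that the $\L$-codimension of each such stratum grows with $\tilde\Delta$: writing a typical destabilising sub/quotient pair as $(A,B)$ of ranks $r_A,r_B$ and discriminants $\Delta_A,\Delta_B$, one computes that this codimension is of order $-\chi(A,B)=r_Ar_B(\Delta_A+\Delta_B)+r^{2}\!\left(\tilde\Delta-\tfrac{r_A}{r}\Delta_A-\tfrac{r_B}{r}\Delta_B\right)+O(\sqrt{\tilde\Delta})$, and since on a wall the divisor $r_Bc_A-r_Ac_B$ is $H$-orthogonal, the Hodge index theorem makes the parenthesised term non-negative; the two non-negative pieces together dominate $(r-1)\tilde\Delta$, which is unbounded for $r\geq2$. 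Consequently, for each fixed $N$ only finitely many of all these corrections (summed over walls, types, and $\tilde\Delta$) reach the power $\L^{-N}$, so the coefficient of $\L^{-N}$ in the difference between $\tilde G_{r,\tilde c}(q)$ and its analogue $\tilde G^{F}_{r,\tilde c}(q)$ built from $\MFF(r,\tilde c,\tilde\Delta)$ is an honest polynomial in $q$; multiplying by $(1-q)$ kills it at $q=1$. Hence $(1-q)\tilde G_{r,\tilde c}(q)$ converges at $q=1$ iff $(1-q)\tilde G^{F}_{r,\tilde c}(q)$ does, and to the same limit.

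\textbf{Step 2 (moduli near the fibre ray, and the $\F$-limit).} For the boundary polarisation $F$, a $\mu$-semistable torsion-free sheaf on $\F$ is fibrewise balanced over all but finitely many fibres, hence is recovered from a vector bundle pulled back from the base $\mathbb P^{1}$ by finitely many elementary modifications along fibres together with a $0$-dimensional part. Promoted to a statement about stacks, this presents $\MFF(r,\tilde c,\tilde\Delta)$, up to affine-bundle (pure-$\L$-power) corrections, as a tower over the moduli stack of rank-$r$ bundles of fixed degree on $\mathbb P^{1}$ --- which, after fixing the invariants so as to bound the splitting type, contributes in the limit only the stacky factor $\tfrac1{1-\L^{-1}}$ of the automorphisms of a balanced bundle --- with the remaining data organised by the Hilbert schemes $\F^{[n]}$. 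Reading off the generating function, the point/modification data produces exactly G\"ottsche's series $\sum_{n\geq0}[\F^{[n]}]q^{n}$, in which the number of $(1-\L^{-i})$-factors produced after normalising, multiplying by $(1-q)$ and setting $q=1$ equals $b_0(\F)+b_2(\F)+b_4(\F)=e(\F)=4$; a bookkeeping identical to the rank-one case (twist to an ideal sheaf, account for the $B\mathbb G_m$-gerbe) then collapses the whole expression to $\prod_{i=1}^{\infty}\tfrac1{(1-\L^{-i})^{4}}$, which is the second assertion.

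\textbf{Step 3 (blow-down to $\P$; the main obstacle).} Since $E+F=\pi^{*}H$, the motivic blow-up formula for moduli of sheaves relates the generating functions $\tilde G_{r,\pi^{*}c+jE}(q)$ for $j=0,\dots,r-1$ to $G_{r,c}(q)$ through locally closed decompositions into affine bundles (using $R\pi_{*}\pi^{*}=\mathrm{id}$ and that $E$ is a $(-1)$-curve); feeding in Steps 1--2 on the $\F$-side identifies $(1-q)G_{r,c}(q)|_{q=1}$ as $(1-q)\tilde G_{r,\tilde c}(q)|_{q=1}$ divided by the contribution of the exceptional direction, which numerically is a single factor $\prod_{i=1}^{\infty}\tfrac1{1-\L^{-i}}$ --- reflecting $e(\F)-e(\P)=1$ --- so that $(1-q)G_{r,c}(q)$ converges at $q=1$ to $\prod_{i=1}^{\infty}\tfrac1{(1-\L^{-i})^{4}}\big/\prod_{i=1}^{\infty}\tfrac1{1-\L^{-i}}=\prod_{i=1}^{\infty}\tfrac1{(1-\L^{-i})^{3}}$, as claimed. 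The main obstacle is Step 2: turning the heuristic ``moduli on a ruled surface near the fibre polarisation are governed by the base plus a Hilbert scheme of points'' into a precise identity of classes, while honestly controlling (a) the failure of the stack of bundles on $\mathbb P^{1}$ to be of finite type, which is only rescued after fixing the numerical invariants, and (b) the stacky automorphisms, which is precisely what promotes the exponent to $e(\F)$ (and, after Step 3, $e(\P)$). The codimension estimate in Step 1 is the other place where genuine work is needed; it is of the standard Bogomolov-inequality flavour and, in quantitative form, is the content of the later sections of this paper.
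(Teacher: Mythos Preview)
The paper does not prove this theorem itself; it is quoted from Coskun--Woolf. That said, the machinery assembled in Sections~\ref{section4} and~\ref{section5} --- Joyce's wall-crossing identity (Theorem~\ref{joyce}) relating $\MF$ to $\MFF$, Mozgovoy's closed-form product for $H_{r,c}(q)$ (Theorem~\ref{hrc}), and the blow-up formula (equation~\ref{blowup}) --- is exactly the Coskun--Woolf apparatus, and your three-step outline matches it: your Step~1 is Joyce's formula, your Step~3 is the blow-up formula, and Step~2 is the evaluation on the $F$-polarised side.

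Where you diverge is precisely at what you flag as the main obstacle. Neither the paper nor Coskun--Woolf attempts the geometric picture you sketch (presenting $\MFF$ as a tower over moduli of bundles on $\mathbb{P}^1$ fibred by Hilbert schemes of $\F$). Instead they import Mozgovoy's theorem, which supplies the exact identity
\[
H_{r,c}(q)=\frac{1}{\L-1}\prod_{i=1}^{r-1}Z_{\mathbb{P}^1}(\L^{i})\prod_{k\geq1}\prod_{i=-r}^{r-1}Z_{\mathbb{P}^1}(\L^{rk+i}q^{k}),
\]
and the limit $\prod_{i\geq1}(1-\L^{-i})^{-4}$ falls out by direct manipulation (the coefficient estimate is Proposition~\ref{hrc2}). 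So your obstacle is real, and its resolution in the literature is to quote Mozgovoy rather than rebuild his formula from the Hilbert-scheme side.

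Your Step~1 also contains a technical error. You assert that for each $N$ the coefficient of $\L^{-N}$ in the difference $\tilde G_{r,\tilde c}-\tilde G^{F}_{r,\tilde c}$ is already a polynomial in $q$, so that $(1-q)$ merely kills it at $q=1$. This is false: in Joyce's formula one sums over infinitely many first-Chern-class types $(r_i,c_i)$, and in the exceptional regime $l=2$, $\mu_F(\gamma_2)-\mu_F(\gamma_1)=-1$ there are infinitely many $b_2$ contributing at bounded $\L$-depth, producing a geometric series in $q$ (see the passage around equation~\ref{spcase} and Lemma~\ref{lowerbound}). The factor $(1-q)$ is not cosmetic here --- it is what collapses that geometric series --- so it cannot be postponed. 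What the paper actually proves (Proposition~\ref{boundMF}) is that the $\L^{-N}q^{\Delta}$-coefficient of $(1-q)\tilde G_{r,\tilde c}(q)$ vanishes for $\Delta>N+C_0$, treating the $l=1$ summand and the $l\geq2$ summands on the same footing rather than splitting them off as you do. Your Hodge-index heuristic for the codimension gives the right flavour outside the exceptional case but does not detect it.
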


Our goal is to determine lower bounds for the stabilization of the Betti numbers for the moduli space $M_{\P,H}(r,c,\Delta)$ in the special case when $r$ and $c \cdot H$ are coprime. The way we do this is by relating the stabilization of the Betti numbers with the convergence of the generating function $(1-q)G_{r,c}(q)$ at $q = 1$. A key ingredient in this method is to relate the classes of the moduli stack and the moduli space in $A$, which was shown by Coskun and Woolf, where $A$ is the quotient of $A^-$ by relations $[P] = [X][PGL_n]$ whenever $P \rarrow X$ is an {\'{e}}tale $PGL_n$-torsor.
\begin{propn}[\cite{cos}, Proposition 7.3]\label{proposition10}
The moduli stack and moduli space of $\mu_H$-stable sheaves on $X$, denoted $\mathcal{M}_{X,H}^s(\gamma)$ and $M_{X,H}^s(\gamma)$ respectively, are related in $A$ as follows: 
\begin{equation} \label{stacktospace}
[M_{X,H}^s (\gamma)] = (\L -1) [\mathcal{M}_{X,H}^s (\gamma)] 
\end{equation}
\end{propn}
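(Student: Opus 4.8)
The plan is to realize both the moduli stack and the moduli space as quotients coming from the Gieseker--Maruyama GIT construction, and then to compare their classes in $A$ using the invertibility of $[GL_N]$ in $A^-$, the Zariski-local triviality of $\mathbb{G}_m$-torsors, and the defining relation of $A$ for \'etale $PGL_N$-torsors.

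First I would fix $m \gg 0$ depending on $\gamma$, set $N = P(m)$ where $P$ is the Hilbert polynomial attached to $\gamma$ with respect to $H$, and inside $\mathrm{Quot}(\mathcal{O}_X(-m)^{\oplus N}, P)$ single out the locally closed subscheme $R^s$ parametrizing quotients $q\colon \mathcal{O}_X(-m)^{\oplus N} \twoheadrightarrow \mathcal{F}$ with $\mathcal{F}$ of Chern character $\gamma$ and $\mu_H$-stable, $H^i(\mathcal{F}(m)) = 0$ for $i>0$, and $H^0(q(m))$ an isomorphism. Then $GL_N$ acts on $R^s$ by precomposition, the central $\mathbb{G}_m$ of scalars acts trivially, and the induced $PGL_N$-action is free because a $\mu_H$-stable sheaf has automorphism group exactly $\mathbb{G}_m$ (this is where stability enters). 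By the standard dictionary one has $\mathcal{M}_{X,H}^s(\gamma) \cong [R^s/GL_N]$, while the GIT quotient restricts on the stable locus to a geometric quotient $M_{X,H}^s(\gamma) \cong R^s/PGL_N$ which is a quasi-projective scheme by Gieseker--Maruyama.

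Next I would assemble the class computations. Since the $PGL_N$-action on $R^s$ is free and the geometric quotient $R^s \to M_{X,H}^s(\gamma)$ exists, this map is an fppf $PGL_N$-torsor, hence (as $PGL_N$ is smooth) an \'etale-locally trivial $PGL_N$-torsor, so the defining relation of $A$ gives $[R^s] = [M_{X,H}^s(\gamma)]\,[PGL_N]$ in $A$. The quotient morphism $GL_N \to PGL_N$ is a $\mathbb{G}_m$-torsor, and every $\mathbb{G}_m$-torsor is Zariski-locally trivial (being the complement of the zero section of a line bundle), so $[GL_N] = (\L-1)\,[PGL_N]$ already in $K_0(var_{\mathbb{K}})$, hence in $A$. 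Moreover $[GL_N] = \L^{\binom{N}{2}}\prod_{i=1}^{N}(\L^i - 1)$ is a unit in $A^-$, since each $\L^i - 1 = \L^i(1 - \L^{-i})$ is invertible (as $1-\L^{-i}$ is a unit in the completed ring $A^-$); thus $[\mathcal{M}_{X,H}^s(\gamma)] = [R^s]\,[GL_N]^{-1}$ in $A^-$, and I would remark that this value is independent of the auxiliary $m$, so it legitimately represents the stack class used in the generating functions of $(\ref{generatingfndefn})$. Combining the three identities in $A$,
\[
(\L - 1)\,[\mathcal{M}_{X,H}^s(\gamma)] = (\L - 1)\,[R^s]\,[GL_N]^{-1} = (\L-1)\,[R^s]\,\bigl((\L-1)[PGL_N]\bigr)^{-1} = [R^s]\,[PGL_N]^{-1} = [M_{X,H}^s(\gamma)],
\]
which is exactly $(\ref{stacktospace})$.

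I expect the main obstacle to be the torsor statement $[R^s] = [M_{X,H}^s(\gamma)][PGL_N]$ in $A$. Because $PGL_N$ is not a special group, a $PGL_N$-torsor need not be Zariski-locally trivial, so it is essential that $R^s \to M_{X,H}^s(\gamma)$ be a genuine principal bundle in the \'etale topology; this is the precise reason the ring $A$ is built by imposing $[P] = [X][PGL_n]$ only for \'etale $PGL_n$-torsors. Justifying it rigorously requires the freeness of the $PGL_N$-action on the stable locus (automorphisms of stable sheaves are scalars), the existence of the geometric quotient as a scheme, and the general principle that a free action of an affine algebraic group admitting a geometric quotient makes the quotient map an fppf-, hence \'etale-, locally trivial torsor. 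Everything else in the argument is formal manipulation in $A^-$ and $A$.
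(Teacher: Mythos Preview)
Your argument is correct and is the standard route to this identity. Note, however, that the paper does not give its own proof of this proposition: it is quoted verbatim from \cite{cos}[Proposition~7.3] and used as a black box, so there is no ``paper's proof'' to compare against. Your reconstruction---realizing $\mathcal{M}_{X,H}^s(\gamma)$ as $[R^s/GL_N]$ and $M_{X,H}^s(\gamma)$ as $R^s/PGL_N$ via the Gieseker--Maruyama Quot-scheme construction, then combining $[R^s]=[M_{X,H}^s(\gamma)][PGL_N]$ in $A$ with $[GL_N]=(\L-1)[PGL_N]$ and the invertibility of $[GL_N]$ in $A^-$---is exactly how this statement is normally proved, and is essentially the argument in \cite{cos} as well.

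One small remark on the delicate step you flagged: the assertion that $R^s \to M_{X,H}^s(\gamma)$ is an \'etale $PGL_N$-torsor is usually justified not from the abstract existence of a geometric quotient alone, but from Luna's \'etale slice theorem (applicable since the $PGL_N$-action on the stable locus is free with closed orbits and reductive, hence trivial, stabilizers). This gives \'etale-local triviality directly, which is the precise input needed for the defining relation of $A$. Your fppf-to-\'etale reduction via smoothness of $PGL_N$ also works, but you should be careful to check that $R^s \to M_{X,H}^s(\gamma)$ is flat (equivalently, faithfully flat of finite presentation) before invoking it; this is part of what Luna's theorem, or the general theory of principal bundles under free proper actions, provides.
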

By our assumption, $r$ and $c \cdot H$ are coprime, a posteriori, all $\mu_H$-semistable sheaves are $\mu_H$-stable. As a consequence, we can use Proposition \ref{proposition10} to relate the moduli stack and the moduli space.

\section{\textsc{Estimating the Generating Functions when the rank is one}}\label{section3}
In this section, our goal is to analyze the generating functions $G_{1,c}(q)$ and $\tilde{G}_{1,\tilde{c}}(q)$. More precisely, we are going to show that when $\Delta > 2N$ the coefficient of $\L^{-N}q^{\Delta}$ in the generating functions $(1-q)G_{1,c}(q)$ and $(1-q)\tilde{G}_{1,\tilde{c}}(q)$ is zero. As a consequence, we are going to show that the $2N$th Betti number of $M_{\mathbb{P}^2, H} (1,c,c_2)$ stabilize when $c_2 \geq 2N$.

Recall that given a smooth projective surface $X$ with an ample divisor $H$ on $X$, the moduli space $M_{X,H}(1,c,c_2)$ is isomorphic to $Pic^c(X) \times X^{[c_2]}$, where $Pic^c(X)$ is the abelian variety of line bundles on $X$ with first Chern class $c$, and $X^{[n]}$ is the Hilbert scheme of $n$ points on $X$. The Betti numbers of $X^{[n]}$ were computed by G{\"{o}}ttsche \cite{got90}. Using the K{\"{u}}nneth formula, Coskun and Woolf \cite{cos}[Proposition 3.3] showed that the Betti numbers of $M_{X,H}(1,c,c_2)$ stabilize as $c_2 $ tends to infinity. In the special case when $X = \mathbb{P}^2$, the moduli space $M_{\mathbb{P}^2, H}(1,c,c_2)$ is isomorphic to ${\mathbb{P}^2}^{[c_2]}$. Ellingsrud and Stromme  \cite{ell}[Theorem 1.1, Corollary 1.3] computed the Betti numbers of ${\mathbb{P}^2}^{[c_2]}$ and showed that the $2N$th Betti number stabilize when $c_2 \geq 2N$. In this section, our goal is to re-derive this result in a flavor similar to the higher rank case.


We infer from equation \ref{generatingfndefn} that 
\[ G_{1,c}(q) = \sum_{\Delta \geq 0} [\MP (r,c, \Delta)] \L^{(1-2\Delta)}q^\Delta \]
and 
\[ \tilde{G}_{1,\tilde{c}}(q) = \sum_{\tilde{\Delta} \geq 0} [\MF (r,\tilde{c},\tilde{\Delta})] \L^{(1-2\tilde{\Delta})} q^{\tilde{\Delta}} \]
We have
\begin{propn}\label{clm1}
For $\Delta > 2N$, the coefficient of $\L^{-N}q^{\Delta}$ in $(1-q)G_{1,c}(q)$ is zero. Same for $(1-q)\tilde{G}_{r, \tilde{c}}(q)$.
\end{propn}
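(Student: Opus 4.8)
The plan is to reduce everything to a concrete count of the dimensions of the moduli stacks $\MP(1,c,\Delta)$ and $\MF(1,\tilde c,\tilde\Delta)$ together with Göttsche's formula for the Betti numbers of the Hilbert scheme of points. First I would record that for rank one the moduli space is $M_{\P,H}(1,c,\Delta)\cong (\P)^{[\Delta]}$ (and $M_{\F,E+F}(1,\tilde c,\tilde\Delta)\cong (\F)^{[\tilde\Delta]}$), which is smooth projective of dimension $2\Delta$ (resp.\ $2\tilde\Delta$); since all semistable sheaves of rank one are stable, Proposition \ref{proposition10} gives $[\MP(1,c,\Delta)]=(\L-1)^{-1}[(\P)^{[\Delta]}]$ in $A$, and likewise on $\F$. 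Plugging this into the definition \ref{generatingfndefn} of $G_{1,c}(q)$ we get, up to the harmless factor $(\L-1)^{-1}$, the series $\sum_{\Delta\ge 0}[(\P)^{[\Delta]}]\L^{1-2\Delta}q^\Delta$. The key structural point is that $[(\P)^{[\Delta]}]\L^{-2\Delta}$, i.e.\ the class of the Hilbert scheme normalized by its dimension, is exactly the object whose stabilization we must control, and its virtual Poincaré polynomial is governed by Göttsche's generating function.

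The main computation is then: express the coefficient of $\L^{-N}q^\Delta$ in $(1-q)G_{1,c}(q)$ in terms of the shifted Betti numbers of $(\P)^{[\Delta]}$ and $(\P)^{[\Delta-1]}$, and show it vanishes for $\Delta>2N$. Concretely, writing $\tilde P_\Delta(t)=\sum_{j\le 0}b_{j,\Delta}t^j$ for the Poincaré polynomial of $(\P)^{[\Delta]}$ shifted so its top is in degree $0$ (degree $4\Delta$ before shifting), the coefficient in question unwinds — via Lemma \ref{lemma3} and equation \ref{poincaredualeqn} — to $b_{-N,\Delta}-b_{-N,\Delta-1}$ (up to bookkeeping from the $(\L-1)^{-1}$ factor, which only shifts indices by a bounded amount and can be absorbed). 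Göttsche's formula gives
\[
\sum_{n\ge 0} P_{(\P)^{[n]}}(t)\,q^n=\prod_{m\ge 1}\frac{1}{(1-t^{2m-2}q^m)(1-t^{2m}q^m)(1-t^{2m+2}q^m)},
\]
and from this one reads off that the coefficient of $t^{4n-2k}$ (the $2k$th Betti number) depends only on the number of partition-type decompositions with weight $n$ and a prescribed total ``defect'' $k$; once $n>2k$ this count no longer changes with $n$. Translating $k=N$ this says precisely that $b_{-N,\Delta}$ is independent of $\Delta$ for $\Delta>2N$, hence the difference vanishes there. The analogous computation on $\F$ uses the four-factor version of Göttsche's product (an extra factor from $b_2(\F)=2$), and the same threshold $\Delta>2N$ works since the extra generator still contributes defect at least $1$ per unit of weight.

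The step I expect to be the real obstacle is pinning down the sharp threshold $\Delta>2N$ from Göttsche's product, i.e.\ showing that the $\L^{-N}$-coefficient genuinely stabilizes at weight $2N$ and not later. This is a stability statement for coefficients of an infinite product: one must argue that any monomial contributing to the $t^{4n-2N}$-coefficient at weight $n$ uses only factors indexed by $m\le N$ (equivalently, partitions whose parts are bounded in terms of $N$), so that for $n>2N$ the remaining weight $n-(\text{something}\le 2N)$ can always be filled up by the ``defect zero'' factor $\prod 1/(1-t^{2m-2}q^m)$ in a way that is in bijection across consecutive values of $n$. I would make this precise by the standard substitution $t\to\L^{1/2}$ and a degree/dimension estimate: in $[(\P)^{[n]}]\L^{-2n}$, a stratum contributes to $\L^{-N}$ only if its codimension in the Hilbert scheme is at most something linear in $N$, and such strata are supported on punctual parts of bounded colength, forcing $n>2N$ to leave a ``free'' unpunctual part that matches bijectively between $n$ and $n+1$. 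Once this combinatorial stability is established, assembling the pieces via Lemma \ref{lemma3}, Remark \ref{remark8}, and Proposition \ref{proposition10} is routine, and the final assertion about the $2N$th Betti number of $M_{\P,H}(1,c,c_2)$ stabilizing for $c_2\ge 2N$ drops out of Remark \ref{remark8} with $\Delta_0(N)=2N+1$.
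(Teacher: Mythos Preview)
Your route is valid in outline but more circuitous than the paper's, and two of your shortcuts are not quite right.

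The paper never passes through Betti numbers at all. It works directly with G\"ottsche's motivic formula
\[
\sum_{\Delta\ge 0}[(\P)^{[\Delta]}]q^\Delta=\prod_{m\ge 1}\frac{1}{(1-\L^{m-1}q^m)(1-\L^m q^m)(1-\L^{m+1}q^m)},
\]
substitutes $q\mapsto\L^{-2}q$, and then observes that the two prefactors you treat as nuisances are absorbed \emph{exactly} into the product: the factor $(1-q)$ cancels the $m=1$ term of the first family (since $1-\L^{0}q=1-q$), and $(1-\L^{-1})^{-1}=\L(\L-1)^{-1}$ is precisely the missing $m=0$ term of the third family. What remains is a product of factors $(1-\L^{-a}q^{b})^{-1}$ with $a\ge 1$, and a one-line inequality on exponents gives $\Delta-N=\sum\alpha_1-\sum\alpha_3\le\sum(m_1^{(j)}-1)\alpha_1^{(j)}\le N$. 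The $\F$ case is identical with one extra copy of the middle family.

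Two places in your sketch need repair. First, the claim that $(\L-1)^{-1}$ ``only shifts indices by a bounded amount'' is false: $(\L-1)^{-1}=\sum_{k\ge 1}\L^{-k}$ is an infinite tail in $\L^{-1}$. The reason it does no damage is the exact cancellation above; if you insist on your Betti-number route you must instead argue that $[(\P)^{[\Delta]}]\L^{-2\Delta}-[(\P)^{[\Delta-1]}]\L^{-2(\Delta-1)}$ has vanishing $\L^{-M}$-coefficient for \emph{every} $0\le M\le N$ once $\Delta>2N$ (and vanishing nonnegative-degree coefficients always), so that convolving with $\sum_{k\ge 1}\L^{-k}$ still yields zero at $\L^{-N}$. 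Second, you misidentify the ``defect-zero'' generator: in your Poincar\'e product the unique factor contributing weight but zero defect is the single term $1/(1-t^{4}q)$ (the $m=1$ case of the family $1/(1-t^{2m+2}q^m)$), not the family $\prod_m 1/(1-t^{2m-2}q^m)$, whose $m$th term carries defect $m+1$.

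Once these are fixed your bijective argument and the paper's inequality are really the same combinatorics; the paper's version is just shorter because it spots the two cancellations up front and never leaves $A^-$.
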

\begin{proof} We have the following equality of generating functions due to G$\ddot{o}$ttsche \cite{got01}[Example 4.9.1] 
\[ \sum_{\Delta = 0}^\infty [(\P)^{[\Delta]}] q^{\Delta} = \prod_{m=1}^\infty \frac{1}{(1 - \L^{m-1}q^m)(1 - \L^m q^m)(1 - \L^{m+1}q^m)} \]
Replacing $q$ with $\L^{-2}q$ in above equation, we get
\[ \sum_{\Delta = 0}^\infty [(\P)^{[\Delta]}] \L^{-2 \Delta} q^{\Delta} = \prod_{m=1}^\infty \frac{1}{(1 - \L^{-(m-1)}q^{m})(1 - \L^{-m}q^m) (1 - \L^{-(m+1)}q^m)} \]
Note that we have \[ [\MP (1, c , \Delta)] = (\L -1)^{-1} [ (\P)^{[\Delta]}] \]
Thus, we get 
\begin{align*}
(1-q)G_{1,c}(q) &= \frac{(1-q)\L}{(\L-1)} \sum_{\Delta = 0}^\infty [(\P)^{[\Delta]}] \L^{-2\Delta} q^{\Delta} \\
&= \frac{(1-q)}{(1 - \L^{-1})} \prod_{m=1}^\infty \frac{1}{(1 - \L^{-(m-1)}q^{m})(1 - \L^{-m}q^m) (1 - \L^{-(m+1)}q^m)} \\
&= \prod_{m_1 = 2}^\infty \frac{1}{(1 - \L^{-(m_1 -1)} q^{m_1})} \prod_{m_2 = 1}^\infty \frac{1}{(1 - \L^{-2 m_2}q^{m_2})} \prod_{m_3 = 0}^\infty \frac{1}{(1 - \L^{-(m_3 +1)}q^{m_3})} \\
&= \prod_{m_1 = 2}^\infty \left( \sum_{\alpha_1 = 0}^\infty \L^{-(m_1 -1)\alpha_1}q^{m_1 \alpha_1} \right) \times \prod_{m_2 = 1}^\infty \left( \sum_{\alpha_2 = 0}^\infty \L^{-m_2 \alpha_2}q^{m_2 \alpha_2} \right) \times \\
&\qquad \qquad \qquad \qquad \prod_{m_3 = 0}^\infty \left( \sum_{\alpha_3 = 0}^\infty \L^{-(m_3 + 1) \alpha_3}q^{m_3 \alpha_3} \right)
\end{align*}
Each non-zero term contributing to the coefficient of $\L^{-N}q^{\Delta}$ in $(1-q)G_{1,c}(q)$ arises from a pair of equations 
\begin{align*}
\Delta &= \sum_{j=1}^{\delta_1} m_1^{(j)}\alpha_1^{(j)} + \sum_{j=1}^{\delta_2} m_2^{(j)}\alpha_2^{(j)} + \sum_{j=1}^{\delta_3} m_3^{(j)} \alpha_3^{(j)} \\
-N &= \sum_{j=1}^{\delta_1} -(m_1^{(j)}-1) \alpha_1^{(j)} + \sum_{j=1}^{\delta_2} -m_2^{(j)} \alpha_2^{(j)} + \sum_{j=1}^{\delta_3} - (m_3^{(j)} + 1) \alpha_3^{(j)}
\end{align*}
where $\alpha_1^{(j)}, \alpha_2^{(j)}, \alpha_3^{(j)} \geq 0$ for all $j\geq 1$, and $m_1^{(j)} \geq 2$, $m_2^{(j)} \geq 1$, $m_3^{(j)} \geq 0$ for all $j \geq 1$. Therefore, we see that 
\[ \Delta - N = \sum_{j=1}^{\delta_1} \alpha_1^{(j)} - \sum_{j=1}^{\delta_3} \alpha_3^{(j)} \leq \sum_{j=1}^{\delta_1} (m_1^{(j)} -1) \alpha_1^{(j)} \leq N \]
Hence, for $\Delta > 2N $ the coefficient of $\L^{-N}q^{\Delta}$ in $(1-q)G_{1,c}(q)$ must be zero.

In a similar fashion as above, we use the following equality of generating functions due to G$\ddot{o}$ttsche \cite{got01}[Example 4.9.3]
\[ \sum_{ \tilde{\Delta} =0}^\infty [\F^{[\tilde{\Delta}]}] q^{\tilde{\Delta}} = \prod_{m=1}^\infty \frac{1}{(1 - \L^{m-1}q^m)(1 - \L^m q^m)^2 (1 - \L^{m+1}q^m)} \]
Replacing $q$ with $\L^{-2} q$ and using the fact $[\MF(1,\tilde{c},\tilde{\Delta})] = (\L -1)^{-1} [ \F^{[\tilde{\Delta}]} ]$, we obtain the following equation 
\begin{align*}
(1-q) \tilde{G}_{1, \tilde{c}}(q) 
&= \prod_{m_1 = 2}^\infty \left( \sum_{\alpha_1 = 0}^\infty \L^{-(m_1 -1)\alpha_1}q^{m_1 \alpha_1} \right) \times \prod_{m_2 = 1}^\infty \left( \sum_{\alpha_2 = 0}^\infty \L^{-m_2 \alpha_2}q^{m_2 \alpha_2} \right)^2 \times \\
&\qquad \qquad \qquad \qquad \prod_{m_3 = 0}^\infty \left( \sum_{\alpha_3 = 0}^\infty \L^{-(m_3 + 1) \alpha_3}q^{m_3 \alpha_3} \right) 
\end{align*}
Each non-zero term contributing to the coefficient of $\L^{-N}q^{\Delta}$ in $(1-q)G_{1, \tilde{c}}(q)$ arises from a pair of equations 
\begin{align*}
\Delta &= \sum_{j=1}^{\delta_1} m_1^{(j)}\alpha_1^{(j)} + \sum_{j=1}^{\delta_{2,1}} m_2^{(j,1)}\alpha_2^{(j,1)} + \sum_{j=1}^{\delta_{2,2}} m_2^{(j,2)}\alpha_2^{(j,2)} + \sum_{j=1}^{\delta_3} m_3^{(j)} \alpha_3^{(j)} \\
-N &= \sum_{j=1}^{\delta_1} -(m_1^{(j)}-1) \alpha_1^{(j)} + \sum_{j=1}^{\delta_{2,1}} - m_2^{(j,1)}\alpha_2^{(j,1)} + \sum_{j=1}^{\delta_{2,2}} - m_2^{(j,2)}\alpha_2^{(j,2)}  + \sum_{j=1}^{\delta_3} - (m_3^{(j)} + 1) \alpha_3^{(j)}
\end{align*}
where $\alpha_1^{(j)}, \alpha_2^{(j,1)}, \alpha_2^{(j,2)}, \alpha_3^{(j)} \geq 0$ for all $j\geq 1$, and $m_1^{(j)} \geq 2$, $m_2^{(j,1)}, m_2^{(j,2)} \geq 1$, $m_3^{(j)} \geq 0$ for all $j \geq 1$. Therefore, we see that 
\[ \Delta - N = \sum_{j=1}^{\delta_1} \alpha_1^{(j)} - \sum_{j=1}^{\delta_3} \alpha_3^{(j)} \leq \sum_{j=1}^{\delta_1} (m_1^{(j)} -1) \alpha_1^{(j)} \leq N \]
Hence, for $\Delta > 2N$ the coefficient of $\L^{-N}q^{\Delta}$ in $(1-q)\tilde{G}_{1,\tilde{c}}(q)$ must be zero.
\end{proof} 

As a consequence of above Proposition \ref{clm1}, we have the following: 
\begin{propn}
When $c_2 \geq 2N$, the $2N$-th Betti number of $M_{\mathbb{P}^2, H}(1,c,c_2)$ stabilize.
\end{propn}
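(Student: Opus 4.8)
The plan is to deduce this statement directly from the previous Proposition \ref{clm1} together with the machinery assembled in Section \ref{section2}. The key observation is that the rank-one moduli space $M_{\mathbb{P}^2,H}(1,c,c_2)$ is smooth and projective (being isomorphic to ${\mathbb{P}^2}^{[c_2]}$), so all the duality and convergence lemmas apply. The role of the generating function $G_{1,c}(q)$ is precisely to package the classes $[\mathcal{M}_{\mathbb{P}^2,H}(1,c,\Delta)]\L^{(1-2\Delta)}$, and by Proposition \ref{proposition10} (which applies since in rank one stability is automatic, and $[\mathcal{M}_{\mathbb{P}^2,H}(1,c,\Delta)]=(\L-1)^{-1}[{\mathbb{P}^2}^{[\Delta]}]$) these are related to the classes $[M_{\mathbb{P}^2,H}(1,c,\Delta)]$ up to the factor $(\L-1)$. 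The point is to fit everything into the template of Remark \ref{remark8}.

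First I would record that $M_{\mathbb{P}^2,H}(1,c,\Delta)$ is a smooth projective variety of dimension $2\Delta$ (this is the dimension of ${\mathbb{P}^2}^{[\Delta]}$; equivalently $\dim = \mathrm{ext}^1(\gamma,\gamma)$ with the relevant Chern character), so that $[M_{\mathbb{P}^2,H}(1,c,\Delta)]\L^{-2\Delta}$ is exactly the Poincar\'e-dual-normalized class appearing in Remark \ref{remark8}. Next I would identify $G_{1,c}(q)$, after multiplying by the appropriate power of $\L$ coming from $[\mathcal{M}]=(\L-1)^{-1}[M]$, with the generating function $\sum_{\Delta\geq 0}[M_{\mathbb{P}^2,H}(1,c,\Delta)]\L^{-2\Delta}q^{\Delta}$ up to the unit factor $\tfrac{\L}{\L-1}$ — indeed this is exactly the manipulation already carried out inside the proof of Proposition \ref{clm1}. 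Then I would invoke Proposition \ref{clm1}: for every $N\geq 0$, taking $\Delta_0(N) = 2N+1$, the coefficient of $\L^{-N}q^{\Delta}$ in $(1-q)G_{1,c}(q)$ vanishes for all $\Delta\geq\Delta_0(N)$. Since $(1-q)G_{1,c}(q)$ and $(1-q)\sum_{\Delta\geq 0}[M_{\mathbb{P}^2,H}(1,c,\Delta)]\L^{-2\Delta}q^{\Delta}$ differ only by the unit $\tfrac{\L}{\L-1}=\tfrac{1}{1-\L^{-1}}=\sum_{k\geq 0}\L^{-k}$, multiplying by this unit can only shift coefficients towards more negative powers of $\L$, so the vanishing of the $\L^{-N}q^{\Delta}$-coefficient for $\Delta > 2N$ persists (for a fixed $N$, only finitely many of the $\L^{-N'}q^{\Delta}$ coefficients with $N'\leq N$ feed into it, and all of those vanish for $\Delta>2N$).

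With this in hand, Remark \ref{remark8} applies verbatim with $X_\Delta = M_{\mathbb{P}^2,H}(1,c,\Delta)$, $d_\Delta = 2\Delta$, and $\Delta_0(N) = 2N+1$: the coefficient of $\L^{-N}$ in $(1-q)\sum_{\Delta\geq 0}[X_\Delta]\L^{-d_\Delta}q^{\Delta}$ is a Laurent polynomial in $q$, hence by Proposition \ref{proposition6} the generating function converges at $q=1$, Lemma \ref{lemma6} gives stabilization of the Poincar\'e polynomials, and the bookkeeping of equation \ref{poincaredualeqn}, Lemma \ref{lemma3}, and Definition \ref{definition1} then yields that the $2N$th Betti number of $M_{\mathbb{P}^2,H}(1,c,c_2)$ is stable once $c_2 \geq \Delta_0(N)-1 = 2N$. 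I do not expect any genuine obstacle here: the only mild subtlety is the unit-factor argument ensuring that passing from the stack-normalized series $G_{1,c}(q)$ to the space-normalized series does not destroy the coefficient vanishing, and this is immediate because $\tfrac{1}{1-\L^{-1}}$ has only nonpositive powers of $\L$. The substance of the result is entirely contained in Proposition \ref{clm1}; this proposition is just its translation through the formalism of Section \ref{section2}.
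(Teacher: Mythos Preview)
Your proof is correct and follows essentially the same approach as the paper: both pass from $(1-q)G_{1,c}(q)$ to the space-normalized series via the factor $(1-\L^{-1})$ coming from Proposition \ref{proposition10}, invoke Proposition \ref{clm1} for the coefficient vanishing, and conclude through Remark \ref{remark8}. One cosmetic point: the factor carrying $(1-q)G_{1,c}(q)$ to $(1-q)\sum_{\Delta}[M_{\P,H}(1,c,\Delta)]\L^{-2\Delta}q^{\Delta}$ is $(1-\L^{-1})$ rather than its inverse $\tfrac{1}{1-\L^{-1}}$, but since both involve only nonpositive powers of $\L$ your persistence-of-vanishing argument goes through unchanged.
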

\begin{proof}
Note that all $\mu_H$-semistable sheaves of rank one on $\mathbb{P}^2$ are $\mu_H$-stable, because the rank is coprime to the first Chern class. As a consequence, we can use Proposition 
\ref{proposition10} due to Coskun and Woolf and the fact that $c_2 = r \Delta + \frac{r-1}{2r}c_1^2$ to get the following equality of generating functions 
\[ (1-q)\sum_{c_2 \geq 0} [M_{\mathbb{P}^2,H}(\gamma)]\L^{-ext^1(\gamma,\gamma)} q^{c_2} = (1-\L^{-1})(1-q)G_{1,c}(q) \]
where $\gamma$ denotes the Chern character $(r, c, \Delta)$.

Each term contributing to the coefficient of $\L^{-N}q^d$ in $(1-\L^{-1})(1-q)G_{1,c}(q)$ comes from a pair of equations 
\begin{align*}
d = \Delta
-N = \varepsilon - N'
\end{align*}
where $\varepsilon \in \{ -1,0 \}$ accounts for the contribution of the coefficient coming from $(1 - \L^{-1})$, and $(\Delta, N')$ accounts for the contribution coming from the terms in coefficient of $\L^{-N'}q^{\Delta}$ in $(1-q)G_{1,c}(q)$. It follows from Proposition \ref{clm1} that for the coefficient of $\L^{-N'}q^\Delta$ to be nonzero, we must have $\Delta \leq 2N'$. Consequently, we must have $d \leq 2N$. Hence, for $d>2N$, the coefficient of $\L^{-N}q^d$ in $(1-\L^{-1})(1-q)G_{1,c}(q)$ must be zero. Therefore, using Remark \ref{remark8}, we conclude that the $2N$th Betti number of $M_{\mathbb{P}^2, H}(1,c,c_2)$ stabilize for $c_2 \geq 2N$.
\end{proof}

\section{\textsc{Estimating the generating function $\tilde{G}_{r, \tilde{c}}(q)$ when rank is at least two}}\label{section4}
In this section, our goal is to show that there is a constant $C_0$ depending only on $r$ and $\tilde{c}$ such that when $\Delta > N + C_0$, the coefficient of $\L^{-N}q^{\Delta}$ in $(1-q)\tilde{G}_{r,\tilde{c}}(q)$ is zero. We are going to show this in a couple of steps. First, we are going to use Mozgovoy's theorem \cite{moz}[Theorem 1.1] and estimate a generating function in $A^-$ expressed in terms of the classes of the moduli stack $\MFF (\gamma)$. Then, we are going to use Joyce's theorem \cite{joy}[Theorem 6.21] to relate the classes of the moduli stacks $\MF (\gamma)$ and $\MFF (\gamma)$ in $A^-$. Lastly, we are going to use key ideas of Coskun and Woolf \cite{cos} and Manschot \cite{man11}, \cite{man} to derive our estimate (see Proposition \ref{boundMF}).

Throughout this section, we are going to assume that $r$ is at least two. We recall two theorems due to Mozgovoy \cite{moz} and Joyce \cite{joy} respectively. 

Let $\MFF (\gamma)$ denote the moduli stack of torsion free $\mu_F$ semistable sheaves on $\F$ with Chern character $\gamma = (r, c , \Delta)$. We define generating function 
\begin{equation}\label{eqnhrc}
H_{r,c}(q) = \sum_{\Delta \geq 0} [\MFF(r,c,\Delta)] q^{r\Delta} \end{equation}
Let $Z_{\mathbb{P}^1}(q) = \frac{1}{(1-q)(1-\L q)}$ be the motivic Zeta function for $\mathbb{P}^1$. Then, we have 
\begin{thm}[\cite{moz}[Theorem 1.1]\label{hrc} If $r \nmid c \cdot F$, then $\MFF (\gamma)$ is empty, and hence $H_{r,c}(q) = 0$. Otherwise, we have 
\[ H_{r,c}(q) = \frac{1}{(\L -1)} \prod_{i=1}^{r-1} Z_{\mathbb{P}^1}(\L^i) \prod_{k=1}^\infty \prod_{i=-r}^{r-1} Z_{\mathbb{P}^1}(\L^{rk+i}q^{k}) \]
\end{thm}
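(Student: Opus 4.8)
The congruence $r\mid c\cdot F$ is forced by semistability: if $E$ is torsion free and $\mu_F$-semistable then its restriction to a general fibre of $\pi\colon\F\to\mathbb{P}^1$ is a semistable bundle on $\mathbb{P}^1$ (otherwise the relative Harder--Narasimhan filtration of $E$ over a dense open of $\mathbb{P}^1$, saturated inside $E$, would produce a subsheaf violating $\mu_F$-semistability), hence balanced, hence of integral slope, so $c\cdot F\in r\mathbb{Z}$; when $r\nmid c\cdot F$ the stack is empty and $H_{r,c}(q)=0$. Assume from now on that $c\cdot F=0$ (the general admissible case follows by twisting with $\pi^{*}\mathcal{O}_{\mathbb{P}^1}(1)$, an isomorphism of stacks shifting $c\cdot F$ by $r$). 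Then a general fibre restriction of $E$ is $\mathcal{O}_{\mathbb{P}^1}^{\oplus r}$, so $E$ agrees away from finitely many fibres with the pullback of a rank-$r$ bundle $V$ on $\mathbb{P}^1$, and is obtained from $\pi^{*}V$ by a modification supported over those fibres — equivalently, after passing to the reflexive hull $E^{\vee\vee}$ (a $\mu_F$-semistable bundle) and the finite-length quotient $E^{\vee\vee}/E$, by a modification of $\pi^{*}V$ along finitely many fibres followed by passage to a finite-colength subsheaf. The plan is to make this ``global times local'' decomposition precise in the motivic Hall algebra of $\F$ (so that automorphisms and the passage $E\leftrightarrow(E^{\vee\vee},E^{\vee\vee}/E)$ are harmless) and to read off the two factors of the asserted formula, working in $A^{-}[[q]]$.

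First I would treat the \emph{global} contribution: summing the classes of the strata of the stack of rank-$r$ bundles on $\mathbb{P}^1$ (of the relevant degree), stratified by splitting type $\bigoplus\mathcal{O}(a_i)$ via Grothendieck's splitting theorem and weighted by the explicit unipotent-times-torus automorphism groups, gives a $\mathbb{L}$-series that is degree-independent and equal to $(\L-1)^{-1}\prod_{i=1}^{r-1}Z_{\mathbb{P}^1}(\L^{i})$ — the $(\L-1)^{-1}$ being the generic $\mathbb{G}_m$ of scalars and each $Z_{\mathbb{P}^1}(\L^i)$ coming from the Harder--Narasimhan recursion on $\mathbb{P}^1$ (for $r=2$ one verifies $\sum_{a\ge 0}[\operatorname{Aut}(\mathcal{O}(a)\oplus\mathcal{O}(-a))]^{-1}=(\L-1)^{-1}Z_{\mathbb{P}^1}(\L)$ by hand). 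Then the \emph{local} contribution: the modifications of $\pi^{*}V$ are supported over a finite but unbounded subset of $\mathbb{P}^1$, and at each point the datum is a dominant coweight of $GL_r$ (the elementary divisors of $\pi^{*}V\hookrightarrow E^{\vee\vee}$ together with the local length of $E^{\vee\vee}/E$), constrained by the requirement that $\mu_F$-semistability be preserved; this is the point at which the affine Grassmannian of $GL_r$, equivalently the affine root system of $\widehat{\mathfrak{gl}}_{r}$, enters. Tabulating how a single point contributes to $\Delta$ (hence, through the normalization $q^{r\Delta}$, to a power of $q$) and to the dimension of its parameter space (hence to a power of $\L$), and then ``integrating over configurations of points in $\mathbb{P}^1$'' via the power structure on $K_0(\mathrm{Var})$ — which turns each factor $\tfrac{1}{1-\L^{a}q^{k}}$ into $\bigl(\tfrac{1}{1-\L^{a}q^{k}}\bigr)^{[\mathbb{P}^1]}=Z_{\mathbb{P}^1}(\L^{a}q^{k})$ — should assemble the local contribution into $\prod_{k=1}^{\infty}\prod_{i=-r}^{r-1}Z_{\mathbb{P}^1}(\L^{rk+i}q^{k})$, the range $i\in\{-r,\dots,r-1\}$ and the scaling $\L^{rk}q^{k}$ being dictated by the affine Weyl group of $\widehat{\mathfrak{gl}}_{r}$ acting on the coweight lattice.

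I expect the \emph{local} step to be the main obstacle. One must translate $\mu_F$-semistability into a transparent condition on coweights, and then prove that the resulting $(\L,q)$-weighted sum over admissible local modifications at a single point equals $\prod_{k\ge 1}\prod_{i=-r}^{r-1}\tfrac{1}{1-\L^{rk+i}q^{k}}$ \emph{before} applying the power-structure operation — an affine Kac--Moody denominator-type identity, which is also where the constraint from semistability does its work. A cross-check on the constants: at $r=1$ there are no nontrivial modifications of $\pi^{*}V$ and the formula collapses to $H_{1,c}(q)=(\L-1)^{-1}\prod_{k\ge1}Z_{\mathbb{P}^1}(\L^{k-1}q^{k})Z_{\mathbb{P}^1}(\L^{k}q^{k})=(\L-1)^{-1}\prod_{k\ge1}\tfrac{1}{(1-\L^{k-1}q^{k})(1-\L^{k}q^{k})^{2}(1-\L^{k+1}q^{k})}$, which is exactly $(\L-1)^{-1}\sum_{\Delta\ge0}[\F^{[\Delta]}]q^{\Delta}$ by G{\"o}ttsche's formula — the very generating function already used in Proposition \ref{clm1} — consistent with $\MFF(1,c,\Delta)\cong\F^{[\Delta]}\times B\mathbb{G}_m$. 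One could alternatively run an induction on $r$ via the Harder--Narasimhan recursion relating $\mu_F$-semistable sheaves to all torsion-free sheaves of the given fibre-degree, but controlling the latter stack leads back to the same pushforward-to-$\mathbb{P}^1$ analysis.
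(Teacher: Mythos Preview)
The paper does not prove this theorem at all: it is quoted verbatim as Theorem~1.1 of Mozgovoy \cite{moz} and used as a black box, so there is no ``paper's own proof'' to compare against. Your proposal is therefore an attempted reconstruction of Mozgovoy's argument, not of anything in the present paper.

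As such a reconstruction, the overall architecture is sound and is indeed close in spirit to what Mozgovoy does: the divisibility condition is exactly the generic-fibre argument you give; the global factor $(\L-1)^{-1}\prod_{i=1}^{r-1}Z_{\mathbb{P}^1}(\L^i)$ is the motivic class of the stack of rank-$r$ bundles on $\mathbb{P}^1$ (a standard Harder--Narasimhan/splitting-type computation); and the local factor does arise from fibrewise modifications governed by affine $GL_r$ combinatorics, assembled over $\mathbb{P}^1$ via the power structure (this is why every factor is a $Z_{\mathbb{P}^1}$). Your $r=1$ sanity check against G{\"o}ttsche's formula for $\F^{[\Delta]}$ is correct and is exactly the consistency used implicitly in Proposition~\ref{clm1}.

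That said, you are right to flag the local step as the genuine content, and your sketch does not close it. The identity you need---that the $(\L,q)$-weighted count of admissible local modifications at a single point is $\prod_{k\ge 1}\prod_{i=-r}^{r-1}(1-\L^{rk+i}q^k)^{-1}$---is not a formal consequence of the set-up; in Mozgovoy's paper it is obtained by a Hall-algebra/wall-crossing argument together with an explicit identity for the relevant generating series, not by a direct appeal to an ``affine Kac--Moody denominator.'' In particular, the translation of $\mu_F$-semistability into a condition on coweights and the subsequent summation require real work (stratifying by the Harder--Narasimhan type along fibres and summing over the resulting lattice), and your proposal stops short of supplying it. If you want a self-contained proof rather than a citation, that is the lemma you would have to prove in full.
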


Before proceeding to Joyce's theorem, in a similar vein as in Proposition \ref{clm1}, we would like to show that for $\Delta \gg N$, the coefficient of $\L^{-N}q^{\Delta}$ in the generating function 

$(1 - q) \sum_{ \Delta \geq 0} [\MFF(r,c,\Delta)] \L^{r^2( 1 - 2 \Delta)} q^{r \Delta}$ vanishes.
\begin{propn}\label{hrc2}
If $\Delta > N $, the coefficient of $\L^{-N}q^{\Delta}$ in the generating function 
\[ (1 - q) \sum_{ \Delta \geq 0} [\MFF(r,c,\Delta)] \L^{r^2( 1 - 2 \Delta)} q^{r \Delta} \] is zero.
\end{propn}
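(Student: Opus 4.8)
The plan is to mimic the computation in Proposition \ref{clm1}, substituting Mozgovoy's product formula (Theorem \ref{hrc}) for G\"ottsche's formula. First I would start from $H_{r,c}(q) = \sum_{\Delta \geq 0} [\MFF(r,c,\Delta)]q^{r\Delta}$ and observe that the generating function in the statement is obtained from $H_{r,c}(q)$ by the substitution $q \mapsto \L^{-2r}q^{?}$; more precisely, since the summand carries $\L^{r^2(1-2\Delta)}q^{r\Delta} = \L^{r^2}\cdot (\L^{-2r^2})^{\Delta} q^{r\Delta}$, replacing $q^{r}$ in $H_{r,c}(q)$ by $\L^{-2r^2}q^{r}$ (equivalently $q \mapsto \L^{-2r}q$) and multiplying by the overall constant $\L^{r^2}$ yields exactly $\sum_{\Delta\geq 0}[\MFF(r,c,\Delta)]\L^{r^2(1-2\Delta)}q^{r\Delta}$. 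So I would write
\[
(1-q)\sum_{\Delta \geq 0} [\MFF(r,c,\Delta)]\L^{r^2(1-2\Delta)}q^{r\Delta} = \frac{(1-q)\L^{r^2}}{(\L-1)}\prod_{i=1}^{r-1} Z_{\mathbb{P}^1}(\L^i)\prod_{k=1}^\infty \prod_{i=-r}^{r-1} Z_{\mathbb{P}^1}(\L^{rk+i}\cdot \L^{-2r^2 k}q^{k}).
\]
The first double product over $i\in\{1,\dots,r-1\}$ is a Laurent polynomial in $\L$ alone (no $q$), so it only shifts $N$ by a bounded amount and contributes nothing to the growth in $\Delta$; I would fold it, together with the prefactor, into a harmless constant factor that is a Laurent series in $\L^{-1}$ with no positive $q$-powers after one checks it has nonnegative dimension behavior — actually the cleanest route is to note $Z_{\mathbb{P}^1}(\L^i) = \frac{1}{(1-\L^i)(1-\L^{i+1})}$, so each such factor is $\L^{-(2i+1)}/((1-\L^{-i})(1-\L^{-(i+1)}))$ up to sign, hence expands as a power series in $\L^{-1}$; combined with $\L^{r^2}(1-q)/(\L-1)$ one still gets something with bounded $\L$-degree and only the constant $q$-term, so it cannot create terms with $\Delta$ large.

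The heart of the argument is the infinite product $\prod_{k\geq 1}\prod_{i=-r}^{r-1} Z_{\mathbb{P}^1}(\L^{rk+i-2r^2k}q^k) = \prod_{k\geq 1}\prod_{i=-r}^{r-1}\frac{1}{(1-\L^{a(k,i)}q^k)(1-\L^{a(k,i)+1}q^k)}$ where $a(k,i) = rk + i - 2r^2 k = -(2r^2-r)k + i$. For every $k\geq 1$ and every $i$ in the range, $a(k,i) \leq -(2r^2-r)k + (r-1) \leq -(2r^2-r) + r - 1 = -2r^2+2r-1 < 0$ since $r\geq 2$; in fact $a(k,i) \leq -k$ for all such $k,i$ because $(2r^2-r)k - i \geq (2r^2-r) - (r-1) = 2r^2 - 2r + 1 \geq k$ would need checking, but more robustly $(2r^2-r)k - (r-1) \geq k \iff (2r^2-r-1)k \geq r-1$, true for $k\geq 1$, $r\geq 2$. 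So every exponent of $\L$ appearing is $\leq -k < 0$, i.e. strictly negative and at least as negative as the $q$-exponent. Expanding each geometric factor $\frac{1}{1-\L^{b}q^{k}} = \sum_{\alpha\geq 0}\L^{b\alpha}q^{k\alpha}$ with $b \leq -k$, a generic monomial contributing to $\L^{-N}q^{\Delta}$ from this infinite product satisfies $\Delta = \sum k_j \alpha_j$ and $-N = \sum b_j \alpha_j \leq \sum (-k_j)\alpha_j = -\Delta$, hence $\Delta \leq N$. After multiplying by the constant factor from the previous paragraph (which contributes only a bounded, $q$-degree-zero perturbation, pushing the bound to $\Delta \le N + C$ for some $C = C(r,c)$ — though I expect one can even keep $\Delta \le N$ by being careful that the prefactor, being a series in $\L^{-1}$ with only the $q^0$ term, does not increase the $q$-degree), every nonzero coefficient of $\L^{-N}q^{\Delta}$ forces $\Delta \leq N$. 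Therefore for $\Delta > N$ the coefficient vanishes, as claimed.

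The main obstacle I anticipate is bookkeeping around the prefactor $\dfrac{(1-q)\L^{r^2}}{(\L-1)}\prod_{i=1}^{r-1}Z_{\mathbb{P}^1}(\L^i)$: I need to confirm that, as an element of $A^-$, it is a Laurent series in $\L^{-1}$ supported only in $q$-degrees $0$ and (from the $(1-q)$) degree $1$, with $\L$-degree bounded above by some explicit constant, so that convolving it with the $\Delta \le N$ estimate from the infinite product does not destroy the bound in $\Delta$ — and in particular to get the sharp threshold "$\Delta > N$" stated in the proposition rather than "$\Delta > N + C_0$", I must check that this prefactor genuinely contributes no positive power of $q$ beyond what $(1-q)$ already supplies and that the $q$-degree-$1$ contribution $-q$ only shifts $\Delta$ by $1$ in a way that is still absorbed. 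The other routine-but-necessary check is the inequality $a(k,i)\leq -k$ for all $k\geq 1$, $-r\leq i\leq r-1$, $r\geq 2$, which is elementary. Everything else is a direct transcription of the geometric-series manipulation already carried out in the proof of Proposition \ref{clm1}.
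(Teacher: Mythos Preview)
Your overall strategy is the same as the paper's: substitute $q\mapsto \L^{-2r}q$ into Mozgovoy's product formula for $H_{r,c}$, expand every factor as a geometric series, and verify that each monomial $\L^{-N}q^{\Delta}$ forces $\Delta\le N$. However, there is a concrete arithmetic slip that makes the argument, as written, break down.

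Under $q\mapsto \L^{-2r}q$ one has $q^{k}\mapsto \L^{-2rk}q^{k}$, not $\L^{-2r^{2}k}q^{k}$. Hence the $\L$-exponent in the $(k,i)$-factor of the infinite product is
\[
a(k,i)\;=\;rk+i-2rk\;=\;-rk+i,
\]
not $-(2r^{2}-r)k+i$. With the correct exponent your key inequality ``$a(k,i)+1\le -k$ for all $k\ge 1$ and $-r\le i\le r-1$'' is \emph{false} at the boundary $k=1$, $i=r-1$: there $a(1,r-1)+1=0$, so the second slot of $Z_{\mathbb{P}^{1}}$ contributes precisely the factor $\dfrac{1}{1-q}$. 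That single factor produces monomials $q^{m}$ with $\L$-exponent $0$, so the infinite product by itself does \emph{not} satisfy $\Delta\le N$; your step ``$-N=\sum b_{j}\alpha_{j}\le -\sum k_{j}\alpha_{j}$'' fails exactly here.

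The paper handles this by using the $(1-q)$ in the prefactor to cancel this $\dfrac{1}{1-q}$ explicitly (equivalently, regrouping so that the ``extremal'' index range starts at $k\ge 2$). After that cancellation every remaining factor genuinely has $\L$-exponent $\le -(\text{its }q\text{-exponent})$, and the bound $\Delta\le N$ follows. The constant prefactor $\L^{r^{2}}(\L-1)^{-1}\prod_{i=1}^{r-1}Z_{\mathbb{P}^{1}}(\L^{i})$ simplifies to $(1-\L^{-r})^{-1}\prod_{i=1}^{r-1}(1-\L^{-i})^{-2}$, a power series in $\L^{-1}$ with constant term $1$, so it only increases $N$ and does not disturb the bound. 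In other words, your remark that $(1-q)$ ``only shifts $\Delta$ by $1$ in a way that is still absorbed'' is precisely where the real content of the argument lives: that $(1-q)$ is not a harmless perturbation but must be spent cancelling an actual $(1-q)^{-1}$ hidden in the infinite product.
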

\begin{proof} Clearly we can assume that $r \mid c \cdot F$, because otherwise by Mozgovoy's theorem (Theorem \autoref{hrc}) we have $[\MFF(r,c,\Delta)] = 0$.
Observe that 
\begin{equation}\label{eqnhrc2}
(1 - q) \sum_{ \Delta \geq 0} [\MFF(r,c,\Delta)] \L^{r^2( 1 - 2 \Delta)} q^{r \Delta} = (1-q) \L^{r^2} H_{r,c} (\L^{-2r}q) 
\end{equation}
Moreover, we have the following equations 
\begin{align*}
\frac{1}{(\L -1)} \prod_{i=1}^{r-1} \frac{1}{(1 - \L^i)(1 - \L^{i+1})} &= \frac{L^{-r^2}}{(1 - \L^{-r})} \prod_{i=1}^{r-1} \frac{1}{(1 - \L^{-i})^2} \\
\prod_{k=1}^{\infty} \prod_{i = -r}^{r-1} \frac{1}{(1 - \L^{-rk +i}q^k)(1 - \L^{-rk + i + 1}q^k)} &= \prod_{k_1 = 1}^\infty \frac{1}{(1 - \L^{-(rk_1 +r)}q^{k_1})} \times \\ 
& \prod_{k_2 = 1}^\infty \prod_{i=-r+1}^{r-1} \frac{1}{(1 - \L^{-(rk_2-i)}q^{k_2})^2} \times \\
& \prod_{k_3 =1}^\infty \frac{1}{(1 - \L^{-(rk_3 -r)}q^{k_3})}
\end{align*}
Therefore, we have 
\begin{align*}
(1-q) \L^{r^2} H_{r,c} (\L^{-2r}q) &= \left( \sum_{\alpha_1 = 0}^\infty \L^{-r \alpha_1} \right) \prod_{i=1}^{r-1} \left( \sum_{\alpha_2 = 0}^\infty \L^{-i \alpha_2} \right)^2 \prod_{k_1 = 1}^\infty \left( \sum_{\alpha_3 = 0}^\infty \L^{-(rk_1 + r) \alpha_3}q^{k_1 \alpha_3} \right) \times \\
& \prod_{k_2 = 1}^\infty \prod_{j = -r + 1}^{r-1} \left( \sum_{\alpha_4 = 0}^\infty \L^{-(rk_2 - j) \alpha_4} q^{k_2 \alpha_4} \right)^2 \prod_{k_3 = 2}^\infty \left( \sum_{\alpha_5 = 0}^\infty \L^{-(rk_3 - r)\alpha_5} q^{k_3 \alpha_5} \right)
\end{align*}
Each non-zero term contributing to the coefficient of $\L^{-N}q^{\Delta}$ in $(1 - q)\L^{r^2}H_{r,c}(\L^{-2r}q)$ corresponds to a pair of equations 
\begin{align*}
\Delta &= \sum_{j_1 = 1}^{\delta_1} k_1^{(j_1)} \alpha_3^{(j_1)} + \sum_{j_2 = 1}^{\delta_2} \sum_{j = -r + 1}^{r-1} k_2^{(j_2, j)} (\alpha_4^{(j_2,j,1)} + \alpha_4^{(j_2, j, 2)}) + \sum_{j_3 = 1}^{\delta_3} k_3^{(j_3)} \alpha_5^{(j_3)} \\
-N &= -r \alpha_1 + \sum_{i = 1}^{r-1} -i (\alpha_2^{(i,1)} + \alpha_2^{(i,2)}) + \sum_{j_1 = 1}^{\delta_1} -(rk_1^{(j_1)} +r) \alpha_3^{(j_1)} +  \\
& \qquad \qquad \sum_{j_2 = 1}^{\delta_2} \sum_{j = -r + 1}^{r-1} -(rk_2^{(j_2, j)} -j) (\alpha_4^{(j_2,j,1)} + \alpha_4^{(j_2, j, 2)}) + \sum_{j_3 = 1}^{\delta_3} -(rk_3^{(j_3)} -r) \alpha_5^{(j_3)}
\end{align*}
where all the $\alpha$'s are non-negative integers and all the $\delta$'s and $k$'s are positive integers except $k_3^{(j_3)}$ which is at least $2$, for all $1 \leq j_3 \leq \delta_3$. We see that 
\[ r\Delta -N \leq \sum_{j_2 =1}^{\delta_2} \sum_{j = -r+1}^{r-1} j(\alpha_4^{(j_2,j,1)} + \alpha_4^{(j_2, j, 2)}) + \sum_{j_3 = 1}^{\delta_3} r \alpha_5^{(j_3)} \]
Since $j \leq r-1$ and $k_3^{(j_3)} \geq 2$, we see that $ (rk_2^{(j_2,j)} -j) \geq 1$ and $ (rk_3^{(j_3)}-r) \geq r$. Hence, we have
{\small
\begin{align*}
\sum_{j_2 =1}^{\delta_2} \sum_{j = -r+1}^{r-1} j(\alpha_4^{(j_2,j,1)} + \alpha_4^{(j_2, j, 2)}) + \sum_{j_3 = 1}^{\delta_3} r \alpha_5^{(j_3)} &\leq (r-1)  \sum_{j_2 = 1}^{\delta_2} \sum_{j = -r + 1}^{r-1} (rk_2^{(j_2, j)} -j) (\alpha_4^{(j_2,j,1)} + \alpha_4^{(j_2, j, 2)}) \\
& \,\, +  \sum_{j_3 = 1}^{\delta_3} (rk_3^{(j_3)} -r) \alpha_5^{(j_3)} \leq (r-1)N 
\end{align*}}
Hence for $\Delta > N $, the coefficient of $\L^{-N}q^{\Delta}$ in 
$(1 - q) \sum_{ \Delta \geq 0} [\MFF(r,c,\Delta)] \L^{r^2( 1 - 2 \Delta)} q^{r \Delta}$ is zero.
\end{proof}

We now proceed to state Joyce's theorem. Let $X$ be a surface with two ample line-bundles $H_1$ and $H_2$. Let $\mathcal{M}_{X,H_1}(\gamma)$ (respectively $\mathcal{M}_{X,H_2}(\gamma)$) denote the moduli stack of torsion free $\mu_{H_1}$ (respectively $\mu_{H_2}$) semistable sheaves on $X$ with Chern character $\gamma = (r,c, \Delta)$. Let $\gamma_1, \cdots, \gamma_l$ be Chern characters such that $\sum_{i=1}^l \gamma_i = \gamma$. Assume that $l \geq 2$, and consider the following conditions for all $1 \leq i \leq l-1$
\begin{equation}\label{defnSmu}
\begin{aligned}
& \qquad \text{A) } \mu_{H_1}(\gamma_i) > \mu_{H_1}(\gamma_{i+1}) \text{ and } \mu_{H_2}(\sum_{j=1}^i \gamma_j) \leq \mu_{H_2} ( \sum_{j=i+1}^l \gamma_j)\\
& \qquad \text{B) } \mu_{H_1}(\gamma_i) \leq \mu_{H_1}(\gamma_{i+1}) \text{ and } \mu_{H_2}(\sum_{j=1}^i \gamma_j) > \mu_{H_2} ( \sum_{j=i+1}^l \gamma_j)
\end{aligned}
\end{equation}
Let $u$ be the number of times that Case B occurs. We define 
\begin{equation}\label{eqnSmu}
\begin{aligned}
 S^\mu( \gamma_1, \cdots, \gamma_l ;H_1,H_2) = \begin{cases} 1, &\text{ if } l=1 \\
 (-1)^u, &\text{ if } l\geq 2,  \text{ and Case A or B occurs for all }1 \leq i \leq l-1\\ 
 0, \qquad &\text{ otherwise }  \end{cases} 
 \end{aligned}
 \end{equation}
\begin{thm}[\cite{joy},{Theorem 6.21}]\label{joyce} If $H_1$ and $H_2$ are ample line-bundles on $X$ satisfying $K_X \cdot H_1 <0$ and $K_X \cdot H_2 < 0$, then we have the following equation 
\[ [ \mathcal{M}_{X,H_2}(\gamma)] = \sum_{\sum_{i=1}^l \gamma_i = \gamma} S^\mu(\gamma_1, \cdots, \gamma_l; H_1, H_2) \L^{- \sum_{1 \leq i < j \leq l} \chi(\gamma_j, \gamma_i)} \prod_{i=1}^l [\mathcal{M}_{X,H_1} (\gamma_i)] \]
\end{thm}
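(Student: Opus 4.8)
The plan is to derive the formula from Harder--Narasimhan theory inside Joyce's motivic Hall algebra of coherent sheaves on $X$, following his wall-crossing machinery. First I would form the Hall algebra $\mathcal{H}$ of the abelian category of coherent sheaves on $X$: it is spanned by classes of finite-type stacks over the stack of all objects, takes values in a completion of $A^-$, and carries the associative product $[\mathcal{S}_1]\ast[\mathcal{S}_2]$ given by the stack of short exact sequences $0\to A_1\to A\to A_2\to 0$ with $A_i\in\mathcal{S}_i$. For each Chern character $\gamma$ it contains the class $\mathbf 1_\gamma$ of all torsion-free sheaves with character $\gamma$, and for each ample $H$ with $K_X\cdot H<0$ the class $\delta^H_\gamma$ of $\mathcal{M}_{X,H}(\gamma)$ (of finite type, by boundedness of semistable sheaves). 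The point of the hypotheses $K_X\cdot H_1, K_X\cdot H_2<0$ is that they force $ext^2$ between the semistable sheaves occurring below to vanish, so that the integration map $\Psi\colon\mathcal{H}\to A^-$ becomes, after the standard Euler-form twist, a ring homomorphism: it sends $\delta^H_\gamma\mapsto[\mathcal{M}_{X,H}(\gamma)]$ and converts each $\ast$-product $\delta_{\gamma_1}\ast\cdots\ast\delta_{\gamma_l}$ into the ordinary product of classes times $\L^{-\sum_{1\le i<j\le l}\chi(\gamma_j,\gamma_i)}$. Thus it suffices to establish the identity inside $\mathcal{H}$, with $\ast$ in place of the twisted product.

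Next I would invoke Harder--Narasimhan filtrations: every torsion-free sheaf has a unique $\mu_{H_t}$-HN filtration with $\mu_{H_t}$-semistable subquotients of strictly decreasing slopes, for $t=1,2$. Stratifying by the Chern characters of the subquotients gives, in the (degreewise finite) completion of $\mathcal{H}$,
\[ \mathbf 1_\gamma \;=\; \sum_{\substack{\gamma_1+\cdots+\gamma_l=\gamma\\ \mu_{H_t}(\gamma_1)>\cdots>\mu_{H_t}(\gamma_l)}}\delta^{H_t}_{\gamma_1}\ast\cdots\ast\delta^{H_t}_{\gamma_l},\qquad t=1,2, \]
each sum being finite modulo any truncation of the $A^-$-filtration.

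Then I would Möbius-invert the $t=1$ relation to express each $\delta^{H_1}_\gamma$ as an explicit alternating sum of $\ast$-products of the $\mathbf 1_{\gamma'}$, substitute these into the $t=2$ relation for $\delta^{H_2}_\gamma$, and regroup terms by the characters $(\gamma_1,\dots,\gamma_l)$ of the outermost blocks. This yields $\delta^{H_2}_\gamma=\sum_{\gamma_1+\cdots+\gamma_l=\gamma}c(\gamma_1,\dots,\gamma_l)\,\delta^{H_1}_{\gamma_1}\ast\cdots\ast\delta^{H_1}_{\gamma_l}$ with $c(\gamma_1,\dots,\gamma_l)$ a signed count of the intermediate double-HN data, and the crux is the purely combinatorial identity $c(\gamma_1,\dots,\gamma_l)=S^\mu(\gamma_1,\dots,\gamma_l;H_1,H_2)$ of (\ref{eqnSmu}). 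For this I would show that the ``Case A / Case B'' dichotomy at each index $i$ exhausts the ways adjacent blocks can fail the $\mu_{H_2}$-decreasing condition, and then prove by induction on $l$ --- splitting off the last block and telescoping the sums over the remaining slope orderings --- that every contribution not fitting the stated shape cancels, leaving exactly $(-1)^u$ when Case A or B holds at all $i$ and $0$ otherwise. Applying $\Psi$ to this Hall-algebra identity inserts the factor $\L^{-\sum_{1\le i<j\le l}\chi(\gamma_j,\gamma_i)}$ and produces the asserted formula.

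I expect the last step to be the main obstacle: one must keep every series manipulation inside the completed Hall algebra (finiteness in each $A^-$-degree, and validity of the Möbius inversion there), but the genuine difficulty is the combinatorial collapse of the signed count of double HN refinements to the closed form $S^\mu$. A more hands-on route in low rank --- realizing the sheaves that are $\mu_{H_2}$-semistable but $\mu_{H_1}$-unstable as projective bundles over products $\mathcal{M}_{X,H_1}(\gamma_1)\times\mathcal{M}_{X,H_1}(\gamma_2)$ and crossing one wall at a time --- gives the same coefficients, but organizing this inclusion--exclusion for arbitrary $l$ is precisely what $S^\mu$ packages, so there is no way to avoid the combinatorics. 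The hypotheses $K_X\cdot H_i<0$ enter only in the first step, to make $\Psi$ multiplicative with the clean Euler twist.
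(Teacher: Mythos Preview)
The paper does not prove this theorem: it is quoted verbatim as Joyce's result \cite{joy}[Theorem 6.21] and used as a black box, so there is no ``paper's own proof'' to compare against. Your sketch is a faithful outline of Joyce's actual argument --- Hall algebra of coherent sheaves, HN stratification for each polarization, M\"obius inversion, and the combinatorial collapse to $S^\mu$ --- and it correctly identifies where the hypotheses $K_X\cdot H_i<0$ are used (to make the integration map multiplicative with the Euler twist). For the purposes of this paper nothing more is needed, since the author only invokes the statement (and its nef extension via \cite{cos}[Corollary 4.4]) rather than reproving it.
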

In our case, we would like to take $X = \F$, $H_1 = F$ and $H_2 = E + F$. Clearly, since $K_{\F} = -2 E - 3 F$, we have $K_{\F} \cdot H_1 < 0$ and $K_{\F} \cdot H_2 < 0$. However, $H_1$ is not ample and so we cannot use Joyce's theorem (Theorem \autoref{joyce}) as stated. Luckily the following observation due to Coskun and Woolf \cite{cos}[Corollary 4.4] saves the day. 
\begin{rem}
Joyce's theorem (Theorem \autoref{joyce}) holds if $H_1$ and $H_2$ are nef, as long as the sum on the right side of equation is convergent.
\end{rem}
Moreover, Coskun and Woolf shows \cite{cos}[Corollary 5.3] that we can use Joyce's equation in our case. Hence, we have 
\begin{equation}\label{eqn}
\begin{aligned}
\sum_{\Delta \geq 0} \MF(\gamma) q^{r \Delta} &= \sum_{\Delta \geq 0} \;\; \sum_{\sum_{i=1}^l \gamma_i = \gamma} \Smu \; \L^{-\sum_{1 \leq i< j \leq l} \chi(\gamma_j, \gamma_i)} \,\times \\
& \qquad \qquad \qquad \qquad \left( \prod_{i=1}^l [\MFF (\gamma_i)] \right) q^{r\Delta} 
\end{aligned}
\end{equation}

Let $\gamma_i = (r_i, c_i, \Delta_i)$ for all $1 \leq i \leq l$. Further, we define $\mu_i = \frac{c_i}{r_i}$ for all $1 \leq i \leq l$. We would like to manipulate equation \ref{eqn} so that the left hand side term of equation \ref{eqn} becomes $\tilde{G}_{r,c}(q)$ and get rid of $\Delta$ from the right hand side term of equation \ref{eqn}.

It is easy to see that 
\[ - \sum_{1 \leq i < j \leq l} \chi(\gamma_j, \gamma_i) = -\frac{1}{2} \left( \sum_{i<j} \chi(\gamma_j, \gamma_i) + \chi(\gamma_i, \gamma_j) \right) -\frac{1}{2} \left( \sum_{i<j} \chi(\gamma_j, \gamma_i) - \chi(\gamma_i, \gamma_j) \right) \]

We now list down some equations expressing the various Euler characteristics
\begin{enumerate}[$\qquad\bullet$]
\item $ \chi(\gamma_j, \gamma_i) - \chi(\gamma_i, \gamma_j) = r_i r_j (\mu_j - \mu_i) \cdot K_{\F} $
\item $\chi(\gamma, \gamma) = r^2( 1 - 2\Delta)$, and $\chi(\gamma_i, \gamma_i) = r_i^2 ( 1 - 2\Delta_i)$ for all $1 \leq i \leq l$.
\item $\sum_{i<j} \chi(\gamma_j, \gamma_i) + \chi(\gamma_i, \gamma_j) = \chi(\gamma, \gamma) - \sum_{i=1}^l \chi(\gamma_i, \gamma_i) $
\end{enumerate}
Using the above equations we get 
\begin{equation}\label{eqn2}
 - \sum_{i<j} \chi(\gamma_j, \gamma_i) = -\frac{1}{2}r^2(1 - 2\Delta) + \frac{1}{2}\sum_{i=1}^l r_i^2 (1 - 2 \Delta_i) - \frac{1}{2} \sum_{i<j} r_i r_j (\mu_j - \mu_i)\cdot K_{\F} 
\end{equation}
We now replace $q$ by $\L^{-2r}q$ in both sides of equation \ref{eqn}, multiply both sides of \ref{eqn} by $\L^{r^2}$, and use equation \ref{eqn2}. We get 
\begin{equation}\label{eqn3} 
\begin{aligned}
\sum_{\Delta \geq 0} [\MF (\gamma) ] \L^{r^2(1 - 2 \Delta)} q^{r\Delta} &= \sum_{\Delta \geq 0 } \;\; \sum_{\sum_{i=1}^l \gamma_i = \gamma} \Smu \;\times \\
& \L^{\frac{1}{2}r^2(1 - 2 \Delta) + \frac{1}{2}\sum_{i=1}^l r_i^2( 1 - 2 \Delta_i)} 
\;  \L^{- \frac{1}{2}\sum_{i<j}r_i r_j(\mu_j - \mu_i) \cdot K_{\F}} \;\times \\
& \left( \prod_{i=1}^l [\MFF (\gamma_i) ] \right) q^{r\Delta}  
\end{aligned}\end{equation}
Note that we are yet to get rid of $\Delta$ from right hand side term in equation \ref{eqn3}. To do that, we need to use Yoshioka's relation for discriminants \cite{yos}[Equation 2.1]
\begin{equation}\label{eqnyos}
r\Delta = \sum_{i=1}^l r_i \Delta_i - \sum_{i=2}^l \; \frac{1}{2r_i \left( \sum_{j=1}^i r_j \right) \left( \sum_{j=1}^{i-1}r_j \right)} \left( \sum_{j=1}^{i-1} r_i c_j - r_j c_i \right)^2
\end{equation}
It follows from Yoshioka's relation that the difference $r \Delta - \sum_{i=1}^l r_i \Delta_i$ depends only on $(r,c)$ and $(r_i,c_i)$ for $1 \leq i \leq l$. So we rewrite the first exponent of $\L$ in equation \ref{eqn3}
\begin{equation}\label{eqn5}
\frac{1}{2}r^2(1 - 2 \Delta) + \frac{1}{2}\sum_{i=1}^l r_i^2 (1 - 2 \Delta_i) = \frac{1}{2}(r^2 + \sum_{i=1}^l r_i^2) - r(r \Delta - \sum_{i=1}^l r_i \Delta_i) - \sum_{i=1}^l r_i (r + r_i) \Delta_i
\end{equation}
{}\\
Using equation \ref{eqn5} back in equation \ref{eqn3} yields
\begin{equation}\label{eqn6}
\begin{aligned}
\tilde{G}_{r,c}(q) &= \sum_{\Delta \geq 0} \;\; \sum_{\sum_{i=1}^l \gamma_i = \gamma } \Smu 
  \L^{\frac{1}{2}\left( r^2 + \sum_{i=1}^l r_i^2 \right)} \; 
  \L^{-\frac{1}{2}\sum_{i<j}r_i r_j(\mu_j - \mu_i) \cdot K_{\F}} \;\times \\
& \qquad \qquad \qquad \qquad
  \left( \L^{-r} q \right)^{r\Delta - \sum_{i=1}^l r_i \Delta_i}
  \left( \prod_{i=1}^l [\MFF (\gamma_i)] \left( \L^{-(r + r_i)} q \right)^{r_i \Delta_i} \right)
\end{aligned}
\end{equation}

Observe that all the terms except the last one involving products on right hand side of equality in equation \ref{eqn6} depends only on $(r,c)$ and $(r_i, c_i)$ for $1 \leq i \leq l$, and the last term depends only on the $\Delta_i$'s for $1 \leq i \leq l$. Therefore, we have 
\begin{equation}\label{eqn7}
\begin{aligned}
\tilde{G}_{r,c}(q) &= \sum_{\sum_{i=1}^l \gamma_i = \gamma} \Smu 
  \L^{\frac{1}{2}\left( r^2 + \sum_{i=1}^l r_i^2 \right)} \; 
  \L^{-\frac{1}{2}\sum_{i<j}r_i r_j(\mu_j - \mu_i) \cdot K_{\F}} \;\times \\
& \qquad \qquad \qquad 
  \left( \L^{-r} q \right)^{r\Delta - \sum_{i=1}^l r_i \Delta_i}
  \sum_{\Delta_1, \cdots, \Delta_l}
  \left( \prod_{i=1}^l [\MFF (\gamma_i)] \left( \L^{-(r + r_i)} q \right)^{r_i \Delta_i} \right)
\end{aligned}
\end{equation}
Recall that we previously defined in equation \ref{eqnhrc} the generating function 
\[ H_{r,c}(q) = \sum_{\Delta \geq 0} [\MFF(r,c,\Delta)] q^{r\Delta} \]
The second summation term in equation \ref{eqn7} can be expressed in terms of $H_{r,c}(q)$ as follows
\begin{equation}\label{eqn8}
  \sum_{\Delta_1, \cdots, \Delta_l}
  \left( \prod_{i=1}^l [\MFF (\gamma_i)] \left( \L^{-(r + r_i)} q \right)^{r_i \Delta_i} \right)
= \prod_{i=1}^l H_{r_i, c_i} (\L^{-(r+r_i)}q)
\end{equation}
Therefore, we have
\begin{equation}
\label{eqn9}
\begin{aligned}
\tilde{G}_{r,c}(q) &= \sum_{\sum_{i=1}^l (r_i,c_i) = (r,c)} \Smu 
  \L^{\frac{1}{2}\left( r^2 - \sum_{i=1}^l r_i^2 \right)} \; 
  \L^{-\frac{1}{2}\sum_{i<j}r_i r_j(\mu_j - \mu_i) \cdot K_{\F}} \;\times \\
& \qquad \qquad \qquad 
  \left( \L^{-r} q \right)^{r\Delta - \sum_{i=1}^l r_i \Delta_i}
  \prod_{i=1}^l \L^{r_i^2} H_{r_i, c_i} (\L^{-(r+r_i)}q)
\end{aligned}
\end{equation}

It follows from the definition of $\Smu$ in equation \ref{eqnSmu} and from Mozgovoy's theorem (Theorem \autoref{hrc}) that all the terms on right hand side of equality of equation \ref{eqn9} depends only on $(r,c)$ and $(r_i,c_i)$ for $1 \leq i \leq l$. Our next goal is to analyze the exponents of each of these terms further and show that for $\Delta \gg N$ the coefficient of $\L^{-N}q^{\Delta}$ in $(1-q)\tilde{G}_{r,c}(q)$ vanishes.

\begin{propn}\label{boundMF}
There is a constant $C_0$ depending only on $r$ and $c$ such that if $\Delta > N  + C_0$, then coefficient of $\L^{-N}q^{\Delta}$ in $(1-q)\tilde{G}_{r,c}(q)$ is zero. Moreover, we can take $C_0$ to be $\frac{1}{2}(r^2 + 1)$.
\end{propn}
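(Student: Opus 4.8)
The plan is to read the vanishing off the closed formula (equation \ref{eqn9}) for $\tilde G_{r,c}(q)$, estimating the contribution of each decomposition $\sum_{i=1}^{l}(r_i,c_i)=(r,c)$ separately. For a monomial $\L^{a}q^{b}$ put $\deg(\L^{a}q^{b})=a+b$; then ``the coefficient of $\L^{-N}q^{\Delta}$ in $(1-q)\tilde G_{r,c}(q)$ vanishes for $\Delta>N+C_{0}$'' is the same as ``every monomial of $(1-q)\tilde G_{r,c}(q)$ has degree $\le C_{0}$''. So it suffices to bound, uniformly by $\tfrac12(r^{2}+1)$, the degrees of the monomials of $(1-q)$ times each summand of equation \ref{eqn9}. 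The case $l=1$ is precisely Proposition \ref{hrc2}, which already gives degree $\le 0$, so from now on assume $l\ge 2$.

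First I would dispose of the moduli factors $\L^{r_i^{2}}H_{r_i,c_i}(\L^{-(r+r_i)}q)$. Set $\widehat H_{r_i,c_i}(q):=\L^{r_i^{2}}H_{r_i,c_i}(\L^{-2r_i}q)$; since $r+r_i=2r_i+(r-r_i)$, this factor equals $\widehat H_{r_i,c_i}(\L^{-(r-r_i)}q)$. For $r_i\ge 2$, Mozgovoy's formula (Theorem \ref{hrc}) expresses $\widehat H_{r_i,c_i}(q)$ as a product of geometric series in $\L^{-m}$ and in $\L^{-s}q^{k}$ with $s\ge k$ — this is the expansion written out in the proof of Proposition \ref{hrc2} — so after the substitution $q\mapsto\L^{-(r-r_i)}q$, which (as $r-r_i\ge 1$) only lowers $\L$-exponents, each factor has monomials of degree $\le 0$, hence so does $\widehat H_{r_i,c_i}(\L^{-(r-r_i)}q)$. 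For $r_i=1$, every rank-one torsion-free sheaf is slope-semistable with respect to any nef divisor, so $\MFF(1,c_i,\Delta_i)=\MF(1,c_i,\Delta_i)$ and $\widehat H_{1,c_i}(q)=\tilde G_{1,c_i}(q)$; the G\"ottsche product used to prove Proposition \ref{clm1} then applies verbatim and gives the same degree bound after the $\L^{-(r-1)}q$-rescaling. Taking the product over $i$ preserves ``degree $\le 0$'', and multiplying by the single $(1-q)$ raises degrees by at most one; thus $(1-q)\prod_{i=1}^{l}\widehat H_{r_i,c_i}(\L^{-(r-r_i)}q)$ has all monomials of degree $\le 1$.

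It then remains to bound the scalar prefactor $\Smu\,\L^{\frac12(r^{2}-\sum_i r_i^{2})}\,\L^{-\frac12\sum_{i<j}r_ir_j(\mu_j-\mu_i)\cdot K_{\F}}\,(\L^{-r}q)^{\,r\Delta-\sum_i r_i\Delta_i}$, whose degree is $\tfrac12(r^{2}-\sum_i r_i^{2})-\tfrac12\sum_{i<j}r_ir_j(\mu_j-\mu_i)\cdot K_{\F}-(r-1)\bigl(r\Delta-\sum_i r_i\Delta_i\bigr)$. By Yoshioka's relation (equation \ref{eqnyos}) the last quantity depends only on the $(r_i,c_i)$ and equals $-\sum_{i\ge 2}\frac{\bigl(\sum_{j<i}(r_ic_j-r_jc_i)\bigr)^{2}}{2r_i(\sum_{j\le i}r_j)(\sum_{j<i}r_j)}$. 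To bound this degree I would feed in three inputs: (a) $\Smu\ne 0$ forces, for each $i$, one of the two mixed sign patterns of equation \ref{defnSmu}, relating the $F$-slopes of consecutive $\gamma_i$ to the $(E+F)$-slopes of the partial sums, which pins down the $F$- and $(E+F)$-components of the classes $r_ic_j-r_jc_i$; (b) $\MFF(\gamma_i)\ne\emptyset$ forces $r_i\mid c_i\cdot F$ (Theorem \ref{hrc}) and $\Delta_i\ge 0$ (Bogomolov); (c) on $\F$ the rank-two sublattice $\langle F,E+F\rangle$ has signature $(1,1)$, so the Hodge index theorem bounds each self-intersection $\bigl(\sum_{j<i}(r_ic_j-r_jc_i)\bigr)^{2}$ from above in terms of its pairings with $F$ and $E+F$, which are controlled by (a). Since $K_{\F}=-2(E+F)-F$, the numbers $(\mu_j-\mu_i)\cdot K_{\F}$ are governed by the very same pairings, and assembling (a)--(c) should give $\deg(\text{prefactor})\le\tfrac12(r^{2}-1)$; combined with the $\le 1$ of the previous paragraph this yields $\deg\le\tfrac12(r^{2}+1)=C_{0}$.

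The hard part is the estimate of the prefactor, and in particular making it uniform over all decompositions with $\Smu\ne 0$: the two pieces $-\tfrac12\sum_{i<j}r_ir_j(\mu_j-\mu_i)\cdot K_{\F}$ and $-(r-1)\bigl(r\Delta-\sum_i r_i\Delta_i\bigr)$ are each unbounded over such decompositions (the first can be large positive, the second large negative), so that only their combination, once the sign constraints of $S^\mu$ and the Hodge index inequality are inserted, is bounded — and it must come out exactly at $\tfrac12(r^{2}-1)$, not a larger constant, to land on $C_0=\tfrac12(r^2+1)$. If a purely term-by-term estimate turns out not to be sharp enough, the fallback is to regroup the sum in equation \ref{eqn9} by the common $F$-Harder--Narasimhan type of $(\gamma_1,\dots,\gamma_l)$ — as in Manschot's and Coskun--Woolf's treatments — so that the alternating $S^\mu$-signs produce cancellations, and then to estimate the resulting convergent subsums; the degree bounds of the second and third paragraphs feed into either version unchanged.
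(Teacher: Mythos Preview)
Your overall architecture matches the paper: split by the decomposition in equation \ref{eqn9}, handle $l=1$ by Proposition \ref{hrc2}, and for $l\ge 2$ separate the moduli factors from the scalar prefactor. Your degree bookkeeping $\deg(\L^{a}q^{b})=a+b$ is a clean way to phrase the paper's running pair $(\Delta',N')$, and your bound $\deg\bigl(\L^{r_i^{2}}H_{r_i,c_i}(\L^{-(r+r_i)}q)\bigr)\le 0$ is exactly the paper's $\Delta'_i\le N'_i$.

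The genuine gap is your assertion that the prefactor degree is bounded by $\tfrac12(r^{2}-1)$ term-by-term once the $S^\mu$ sign constraints are imposed. It is not. Take $l=2$, write $c_i=r_ia_iE+b_iF$, and suppose $a_2-a_1=-1$. Then the prefactor collapses to $\L^{0}\,q^{\,\frac{r_1r_2}{2r}-\frac{br_2}{r}+b_2}$, whose degree equals $\tfrac{r_1r_2}{2r}-\tfrac{br_2}{r}+b_2$. Case A of the $S^\mu$ conditions only forces $b_2\ge\lceil br_2/r\rceil$, so the degree is unbounded above as $b_2\to\infty$. No Hodge-index inequality will rescue this: the class $r_1c_2-r_2c_1$ has fixed $F$-component but arbitrary $(E+F)$-component, and the prefactor degree is linear in that component with positive coefficient. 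The paper deals with this case \emph{not} by bounding individual terms but by summing the geometric series $\sum_{b_2\ge\lceil br_2/r\rceil}(1-q)q^{b_2}=q^{\lceil br_2/r\rceil}$, i.e.\ by spending the $(1-q)$ on the prefactor side to collapse the divergent sum (see equation \ref{spcase}). Since you have already attached the single $(1-q)$ to the moduli product, it is no longer available here, and your scheme breaks. Your ``fallback'' gestures in the right direction, but you need to recognize that this specific family is where it is forced, and that what is required is a cancellation of an infinite sum, not merely a sharper constant.

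For the remaining decompositions (all $l\ge 2$ with $(l,a_2-a_1)\ne(2,-1)$), the paper does obtain a term-by-term lower bound $\kappa$ on $2(r-1)\bigl(r\Delta-\sum r_i\Delta_i\bigr)+\sum_{i<j}r_ir_j(\mu_j-\mu_i)\cdot K_{\F}$ (Lemma \ref{lowerbound}), but not via Hodge index: it rewrites everything using Manschot's identities, minimizes a quadratic in the $a_i$ by Lagrange multipliers to bound $S_1$, and uses the $S^\mu$ case split on $(a_i-a_{i-1})$ versus $s_i$ to show each summand of $S_2$ is nonnegative. This yields $\kappa\ge -(r-1)$, hence $1+\tfrac12(r^{2}-\sum r_i^{2})-\tfrac12\kappa\le\tfrac12(r^{2}+1)$. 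Your Hodge-index sketch may be adaptable to this part, but as written it is only a plan, and the exceptional case above must be carved out and handled by summation regardless.
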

\begin{proof}
Our approach is to look at each summand of $(1-q)\tilde{G}_{r,c}(q)$ corresponding to a equation \[ (r,c) = \sum_{i=1}^l (r_i,c_i) \]
and find a lower bound for $\Delta$ corresponding to the term 
\begin{equation}\label{rhs}
\begin{aligned} 
& (1-q)\Smu 
  \L^{\frac{1}{2}\left( r^2 - \sum_{i=1}^l r_i^2 \right)} \; 
  \L^{-\frac{1}{2}\sum_{i<j}r_i r_j(\mu_j - \mu_i) \cdot K_{\F}} \;\times \\
&   \qquad \qquad \qquad 
  \left( \L^{-r} q \right)^{r\Delta - \sum_{i=1}^l r_i \Delta_i}
  \prod_{i=1}^l \L^{r_i^2} H_{r_i, c_i} (\L^{-(r+r_i)}q)
\end{aligned} 
\end{equation}

If $l=1$, then equation \ref{rhs} becomes
\begin{equation}
(1-q) S^\mu(\gamma; F, E+F)\L^{r^2} H_{r,c}(\L^{-2r}q)
\end{equation}
It follows from Proposition \ref{hrc2} and equation \ref{eqnhrc2} that for $\Delta > N $, the coefficient of $\L^{-N}q^{\Delta}$ in $(1-q)\L^{r^2} H_{r,c}(\L^{-2r}q)$ is zero.

Assume $l \geq 2$. We would like to estimate a lower bound for $\Delta'_i$ such that the coefficient of $\L^{-N'_i}q^{\Delta'_i}$ in $\L^{r_i^2}H_{r_i,c_i}(\L^{-(r+r_i)}q)$ is zero, and then use that to figure out a lower bound for $\Delta$ in equation \ref{rhs}. It follows from Mozgovoy's theorem (Theorem \ref{hrc}) that 
\begin{align*}
\L^{r_i^2} H_{r_i,c_i}(\L^{-(r+r_i)}q) &= \L^{r_i^2} \frac{1}{(\L -1)} \prod_{j=1}^{r_i -1} Z_{\mathbb{P}^1}(\L^j) \prod_{k=1}^\infty \prod_{j=-r_i}^{r_i -1} Z_{\mathbb{P}^1} (\L^{-(rk -j)}q^k) \\
&= \frac{1}{(1-\L^{-r_i})} \left( \prod_{j=1}^{r_i -1} \frac{1}{(1 - \L^{-j})^2} \right) 
 \prod_{k=1}^\infty \Bigg\lbrace \frac{1}{(1 - \L^{-(rk + r_i)}q^k)} \;\times \\
& \qquad \qquad \qquad  \left( \prod_{j= -r_i +1}^{r_i -1} \frac{1}{(1 - \L^{-(rk - j)}q^k)^2} \right) \frac{1}{(1 - \L^{-(rk-r_i)}q^k)} \Bigg\rbrace
\end{align*}
Thus, we get 
\begin{align*}
\L^{r_i^2} H_{r_i,c_i}(\L^{-(r +r_i)}q) &= \left( \sum_{\alpha_1 =0}^\infty \L^{-r_i \alpha_1} \right) 
\left( \prod_{j_1 =1}^{r_i -1} \left( \sum_{\alpha_2 =0}^\infty \L^{-j_1 \alpha_2} \right)^2 \right) \: \times \\
&
\prod_{k=1}^\infty \Bigg\lbrace
\left( \sum_{\alpha_3 =0}^\infty \L^{-(rk + r_i) \alpha_3}q^{k \alpha_3} \right)
\left( \prod_{j_2 = -r_i +1}^{r_i -1} \left( \sum_{\alpha_4 =0}^\infty \L^{-(rk - j_2) \alpha_4} q^{k \alpha_4} \right)^2 \right) \\
& \qquad \qquad
\left( \sum_{\alpha_5 =0}^\infty \L^{-(rk - r_i) \alpha_5} q^{k \alpha_5} \right) \Bigg\rbrace
\end{align*}

Each nonzero term contributing to the coefficient of $\L^{-N'_i}q^{\Delta'_i}$ in $\L^{r_i^2}H_{r_i,c_i}(\L^{-(r+r_i)}q)$ arises from a pair of equations
\begin{align*}
\Delta'_i &= \sum_{j=1}^\delta \left\lbrace k^{(j)}\alpha_3^{(j)} + \sum_{j_2 = -r_i +1}^{r_i -1} k^{(j)}( \alpha_4^{(j,j_2,1)} + \alpha_4^{(j,j_2,2)}) + k^{(j)}\alpha_5^{(j)} \right\rbrace \\
-N'_i &= -r_i \alpha_1 + \sum_{j_1=1}^{r_i -1} -j_1 (\alpha_2^{(j_1,1)} + \alpha_2^{(j_1,2)}) + 
\sum_{j=1}^\delta \Bigg\lbrace -(rk^{(j)} +r_i) \alpha_3^{(j)} \; + \\
& \qquad
\left( \sum_{j_2 = -r_i +1}^{r_i -1} -(rk^{(j)} -j_2) (\alpha_4^{(j,j_2,1)} + \alpha_4^{(j,j_2,2)}) \right) 
-(rk^{(j)} -r_i)\alpha_5^{(j)} \Bigg\rbrace
\end{align*}
where all the $\alpha$'s are non-negative integers, $\delta$ and the $k$'s are positive  integers. Hence, we get 
\[ r \Delta'_i - N'_i \leq \sum_{j=1}^\delta \left( \sum_{j_2 = -r_i +1}^{r_i -1} j_2 (\alpha_4^{(j,j_2,1)}  + \alpha_4^{(j,j_2,2)} ) \right) + r_i \alpha_5^{(j)} \]
Since $j_2 \leq r_i -1$ and $k^{(j)} \geq 1$, we see that 
$ j_2 \leq r_i (rk^{(j)} - j_2) $. Moreover, because $l \geq 2$ we have $r_i \leq (r-1)$, and so
$ r_i \leq r_i (rk^{(j)} -r_i) $. These two inequalities yield 
\[ \sum_{j=1}^\delta \left( \sum_{j_2 = -r_i +1}^{r_i -1} j_2 (\alpha_4^{(j,j_2,1)}  + \alpha_4^{(j,j_2,2)} ) \right) + r_i \alpha_5^{(j)} \leq r_i N'_i \]
In summary, we get $r \Delta'_i - N'_i \leq r_i N'_i \leq (r-1) N'_i$, a posteriori, $\Delta'_i \leq N'_i$.

Going back to equation \ref{rhs}, we see that each non-zero term contributing to the coefficient of $\L^{-N'}q^{\Delta'}$ in equation \ref{rhs} arises from a pair of equations 
\begin{align*}
\Delta' &= \varepsilon + \left( r\Delta - \sum_{i=1}^l r_i \Delta_i \right) + \sum_{i=1}^l \Delta'_i \\
-N' &= \frac{1}{2}\left( r^2 - \sum_{i=1}^l r_i^2 \right) - \frac{1}{2} \left( \sum_{i<j} r_i r_j (\mu_j - \mu_i) \cdot K_{\F} \right) - r \left( r\Delta - \sum_{i=1}^l r_i \Delta_i \right)  + \sum_{i=1}^l - N'_i
\end{align*}
where $\varepsilon \in \{ 0,1 \}$ which accounts for contribution to the coefficient coming from $(1-q)$, and $(\Delta'_i, N'_i)$ accounts for the contribution of terms to the coefficient of $\L^{-N'}q^{\Delta'}$ coming from terms of coefficient of $\L^{-N'_i}q^{\Delta'_i}$ appearing in $\L^{r_i^2}H_{r_i,c_i}(\L^{-(r+r_i)}q)$. Since $\Delta'_i \leq N'_i$ for all $1 \leq i \leq l$ and $\varepsilon \leq 1$, we see that 
\begin{equation}\label{eqn15}
\Delta' \leq N' + 1 + \frac{1}{2}\left( r^2 - \sum_{i=1}^l r_i^2 \right) - \frac{1}{2} \left\lbrace \left( \sum_{i<j} r_i r_j (\mu_j - \mu_i) \cdot K_{\F} \right) + 2(r-1)\left( r\Delta - \sum_{i=1}^l r_i \Delta_i \right) \right\rbrace 
\end{equation}
Clearly, to bound $\Delta'$, we need to bound the last term in above equation \ref{eqn15}. We are going to show later (in Lemma \autoref{lowerbound}) that 
\[ 2(r-1) \left( r\Delta - \sum_{i=1}^l r_i \Delta_i \right) + \left( \sum_{i<j} r_i r_j (\mu_j - \mu_i) \cdot K_{\F} \right) \]
is bounded below by a constant $\kappa$ which depends only on $(r,c)$ and $r_i$ for all $ 1 \leq i \leq l$, except when $l=2$ and $\mu_F(\gamma_2) - \mu_F(\gamma_1) = -1$. Thus, we have 
\[ \Delta' \leq N' + 1 +  \frac{1}{2}\left(r^2 - \sum_{i=1}^l r_i^2 \right) - \frac{1}{2}\kappa  \]

We would like to scrutinize the special case when $l=2$ and $\mu_F (\gamma_2) - \mu_F(\gamma_1) = -1$. Note that it follows from Mozgovoy's theorem (Theorem \ref{hrc}) that $H_{r,c}$ only depends on whether or not $r \mid c \cdot F$. Let $r = r_1 + r_2$, $c = aE + bF$, $c_1 = r_1a_1 E + b_1 F$ and $c_2 = r_2 a_2 E + b_2 F$. We will denote $H_{r_i,c_i}$ by $H_{r_i}$ for $i=1,2$ because we are assuming that $r_i \mid c_i \cdot F$ for $i=1,2$. It follows from equation \ref{eqnSmu} that for $S^\mu(\gamma_1,\gamma_2;F,E+F)$ to be  nonzero, we must have $\mu_{E+F}(\gamma_1) \leq \mu_{E+F}(\gamma_2)$, or equivalently, we have $b_2 \geq \frac{br_2}{r}$. Furthermore, we see that 
\[ -\frac{1}{2} r_1 r_2(\mu_2 - \mu_1)\cdot K_{\F} = r_1 r_2 (a_2 - a_1) + rb_2 - r_2 b \] 
and 
\[ r\Delta - r_1 \Delta_1 - r_2 \Delta_2 = \frac{r_1 r_2 }{2r}(a_2 - a_1)^2 - (a_2 - a_1)b_2 + b\frac{r_2(a_2 - a_1)}{r}\]
Using these equations together with the fact that $a_2 - a_1 = -1$, we see that equation \ref{rhs} transforms to 
\[ (1-q)\L^{\frac{1}{2}(r^2 - r_1^2 - r_2^2)}\L^{-r_1 r_2}q^{\frac{r_1 r_2}{2r} - \frac{b r_2}{r}}q^{b_2} \prod_{i=1}^2 \L^{r_i^2}H_{r_i}(\L^{-(r+r_i)}q) 
\]
whenever $b_2 \geq \frac{br_2}{r}$ and is zero otherwise. Adding all these terms for $b_2 \geq \frac{br_2}{r}$  yields 
\begin{equation}\label{spcase}
 \L^{\frac{1}{2}(r^2 - r_1^2 - r_1^2) - r_1 r_2} q^{\frac{r_1 r_2}{2r} - \frac{b r_2}{r}} q^{\left\lceil \frac{b r_2}{r}\right\rceil} \prod_{i=1}^2 \L^{r_i^2}H_{r_i}(\L^{-(r+r_i)}q)
\end{equation}
Each nonzero term appearing in the coefficient of $\L^{-N'}q^{\Delta'}$ in equation \ref{spcase} arises from a pair of equations 
\begin{align*}
\Delta' &= \frac{r_1 r_2}{2r} - \frac{br_2}{r} + \left\lceil \frac{br_2}{r} \right\rceil + \Delta'_1 + \Delta'_2 \\
-N' &= \frac{1}{2}(r^2 - r_1^2 - r_2^2) - r_1 r_2 - N'_1 - N'_2 = -N'_1 - N'_2
\end{align*}
where $(\Delta'_i,N'_i) $ accounts for contribution coming from terms of coefficient of $\L^{-N'_i}q^{\Delta'_i}$ in $\L^{r_i^2}$ $ H_{r_i}(\L^{-(r+r_i)}q)$. We have shown before that we must have $\Delta'_i \leq N'_i$ for $i = 1,2$. Hence, we must have 
\[ \Delta' \leq N' + \frac{r_1 r_2}{2r} + \left( \left\lceil \frac{br_2}{r}\right\rceil - \frac{br_2}{r}\right) \]

In conclusion, we have 
\[ \Delta' \leq N' +  C_0 \]
where $C_0$ is the supremum of $0$, the terms $1 + \frac{1}{2}\left( r^2 - \sum_{i=1}^l r_i^2 \right) - \frac{1}{2}\kappa  $ corresponding to $l \geq 2$ and $r_1 + \cdots + r_l = r$, and the terms $\frac{r_1 r_2}{2r} + \left( \left\lceil \frac{br_2}{r}\right\rceil - \frac{br_2}{r}\right)$ corresponding to $l=2$, $r_1 + r_2 = r$, and $\mu_F(\gamma_2) - \mu_F(\gamma_1) = -1$.  

It follows from equation \ref{kapval} that $\kappa$ is bounded below by $-(r-1)$. Clearly, $\left( r^2 - \sum_{i=1}^l r_i^2 \right)$ is bounded above by $r^2 - r$. Hence, we see that
\[ 1 + \frac{1}{2} \left( r^2 - \sum_{i=1}^l r_i^2 \right) - \frac{1}{2}\kappa \;\leq\; \frac{1}{2}\left( r^2 + 1 \right)\]

Clearly $\left( \left\lceil \frac{br_2}{r} \right\rceil - \frac{br_2}{r}\right) \leq 1$ and $\frac{r_1(r-r_1)}{2r}$ is bounded above by $\frac{r}{8}$, whence the terms corresponding to $r=r_1 + r_2$ and $\mu_F(\gamma_2 ) - \mu_F(\gamma_1) = -1$ are bounded above by $\frac{r}{8}+1$.

In summary, we can take $C_0$ to be $\frac{1}{2}(r^2 + 1)$. Hence, for $\Delta' > N'  + \frac{1}{2}(r^2 + 1) $, the coefficient of $\L^{-N'}q^{\Delta'}$ in $(1-q)\tilde{G}_{r,c}(q)$ is zero.
\end{proof}
 
\begin{lem}\label{lowerbound}
The following expression 
\begin{equation}\label{lbterm}
 2(r-1) \left( r\Delta - \sum_{i=1}^l r_i \Delta_i \right) + \left( \sum_{i<j} r_i r_j (\mu_j - \mu_i) \cdot K_{\F} \right) 
\end{equation}
is bounded below by some constant $\kappa$ which depends only on $(r,c)$ and $r_i$ for $1 \leq i \leq l$, except when $l=2$ and $\mu_F(\gamma_2 ) - \mu_F(\gamma_1) = -1$.
\end{lem}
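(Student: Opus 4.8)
The plan is to feed Yoshioka's discriminant relation \ref{eqnyos} into \ref{lbterm}, rewrite everything as intersection numbers on $\F$, and then bound the resulting quadratic-plus-linear expression using the sign restrictions imposed by $S^{\mu}(\gamma_1,\dots,\gamma_l;F,E+F)\ne 0$. Concretely, I would set $R_k=r_1+\cdots+r_k$, $\Gamma_{<i}=\gamma_1+\cdots+\gamma_{i-1}$, and introduce the slope defects $\nu_i=\mu_i-\tfrac1{R_{i-1}}\sum_{j<i}c_j\in\mathrm{Pic}(\F)_{\mathbb Q}$ for $2\le i\le l$, so that $\nu_i\cdot F$ and $\nu_i\cdot(E+F)$ record the differences of the $\mu_F$- and $\mu_{E+F}$-slopes of $\gamma_i$ and $\Gamma_{<i}$. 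Since $r_ic_j-r_jc_i=r_ir_j(\mu_j-\mu_i)$, a telescoping computation gives $\sum_{i<j}(r_ic_j-r_jc_i)=\sum_{i=2}^{l}R_{i-1}r_i\nu_i$ and identifies the $i$-th summand of Yoshioka's correction with $\tfrac{r_iR_{i-1}}{2R_i}\nu_i^{2}$, whence \ref{lbterm} equals $\sum_{i=2}^{l}R_{i-1}r_i\bigl(-\tfrac{r-1}{R_i}\nu_i^{2}+\nu_i\cdot K_{\F}\bigr)$. Writing $\nu_i=\alpha_iE+\beta_iF$ and using $E^{2}=-1$, $E\cdot F=1$, $F^{2}=0$, $K_{\F}=-2E-3F$ turns the $i$-th summand into $R_{i-1}r_i\bigl(\tfrac{r-1}{R_i}\alpha_i^{2}-\alpha_i-\tfrac{2\beta_i}{R_i}((r-1)\alpha_i+R_i)\bigr)$, where $\alpha_i=\mu_F(\gamma_i)-\mu_F(\Gamma_{<i})$ and $\beta_i=\mu_{E+F}(\gamma_i)-\mu_{E+F}(\Gamma_{<i})$. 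As $r$, $c$ and the $r_i$ are fixed, the parabola $\tfrac{r-1}{R_i}\alpha_i^{2}-\alpha_i$ is bounded below (by $-\tfrac{R_i}{4(r-1)}\ge-\tfrac12$), so the only way a summand can fail to be bounded below is through the term linear in $\beta_i$.

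Next I would bring in $S^{\mu}\ne 0$: for each $1\le i\le l-1$ exactly one of Case A or Case B of \ref{defnSmu} holds, and unwinding the $\mu_{E+F}$-inequality (with $(E+F)^{2}=1$) shows Case A at $i$ is equivalent to $\mu_F(\gamma_i)>\mu_F(\gamma_{i+1})$ together with $\mu_{E+F}(\Gamma_{\le i})\le\mu_{E+F}(\gamma)$, and Case B to the two reversed inequalities. Thus the integer $\mu_F$-slopes force which side of $\mu_{E+F}(\gamma)$ each cumulative slope $\mu_{E+F}(\Gamma_{\le i})$ lies on, and the boundary value $\mu_{E+F}(\Gamma_{\le l})=\mu_{E+F}(\gamma)$ closes the chain; a case analysis organized along the maximal runs of Case A and of Case B then shows that in every admissible configuration the $\beta_i$-terms are bounded below in terms of $r$, $c$ and the $r_i$ --- \emph{except} when $l=2$ and $\mu_F(\gamma_2)-\mu_F(\gamma_1)=-1$, where Case A forces $\beta_2=\tfrac{r}{r_2}\bigl(\mu_{E+F}(\gamma)-\mu_{E+F}(\gamma_1)\bigr)\ge0$ with no upper bound while the coefficient $(r-1)\alpha_2+R_2=-(r-1)+r=1$ is positive, so that summand runs to $-\infty$. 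Outside that family I would take $\kappa$ to be the minimum over the finitely many admissible rank patterns $(r_1,\dots,r_l)$ and sign configurations of the lower bounds produced; it depends only on $(r,c)$ and the $r_i$, and a crude estimate of those bounds gives $\kappa\ge-(r-1)$ (equation \ref{kapval}, as used in the proof of Proposition \ref{boundMF}).

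The hard part will be the case analysis of the second paragraph: the $\beta_i$ for different $i$ are coupled through the cumulative slopes $\mu_{E+F}(\Gamma_{\le i})$, so one cannot simply bound each summand in isolation, and one has to use the zig-zag alternation of Cases A and B together with the boundary value at $i=l$ either to pin down the sign of each $\beta_i$ or to group consecutive summands so that the favourable quadratic contributions dominate the unfavourable linear ones. One must also separately verify that $l=2$ with $\mu_F(\gamma_2)-\mu_F(\gamma_1)=-1$ is genuinely the unique configuration for which no lower bound exists.
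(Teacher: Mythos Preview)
Your algebraic setup is correct: the reformulation of \eqref{lbterm} via Yoshioka's relation as $\sum_{i=2}^{l}R_{i-1}r_i\bigl(\tfrac{r-1}{R_i}\alpha_i^{2}-\alpha_i-\tfrac{2\beta_i}{R_i}((r-1)\alpha_i+R_i)\bigr)$ is right, and so is your identification of the exceptional case $l=2$, $\alpha_2=-1$. The gap is exactly where you say it is, and it is more serious than you suggest.

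The conditions in \eqref{defnSmu} do \emph{not} control your variables. Case A/B at index $i$ pins down the signs of $a_{i+1}-a_i=\mu_F(\gamma_{i+1})-\mu_F(\gamma_i)$ and of $\sigma_i:=\mu_{E+F}(\Gamma_{\le i})-\mu_{E+F}(\gamma)$; it says nothing directly about your $\alpha_i=\mu_F(\gamma_i)-\mu_F(\Gamma_{<i})$ (which is not even an integer in general) or your $\beta_i=\mu_{E+F}(\gamma_i)-\mu_{E+F}(\Gamma_{<i})$. One has $\beta_i=\tfrac{R_i}{r_i}\sigma_i-\tfrac{R_i}{R_{i-1}}\sigma_{i-1}$, so each $\beta_i$ mixes two consecutive $\sigma$'s whose signs are governed by \emph{different} Case A/B choices; the ``case analysis along maximal runs'' you propose therefore cannot treat the summands one at a time, and carrying it out forces you to re-express the $\beta$-linear part in the $\sigma_i$'s and check that the resulting coefficient of each $\sigma_i$ has the sign compatible with Case A/B at $i$. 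That rewriting is essentially the content of Manschot's identities \eqref{id2}--\eqref{id3}.

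The paper does precisely this change of variables up front. It splits \eqref{lbterm} as $S_1+S_2$ with $S_1$ a function of the integers $a_i=\mu_F(\gamma_i)$ alone (bounded below by a Lagrange-multiplier/lattice-point argument), and
\[
S_2=2\sum_{i=2}^{l}\bigl((r-1)(a_i-a_{i-1})+r_i+r_{i-1}\bigr)\Bigl(\tfrac{b}{r}\sum_{j\ge i}r_j-s_i\Bigr),
\]
where $s_i=\sum_{j\ge i}b_j$. The point is that the two factors in the $i$-th term of $S_2$ are \emph{exactly} the two quantities whose signs are fixed by Case A/B at that index, so every term of $S_2$ is individually $\ge 0$ --- unless $l=2$ and $a_2-a_1=-1$, where the first factor is $+1$ while the second is $\le 0$ with no lower bound. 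There is no coupling and no run-analysis.

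In short: your outline is sound, but the ``hard part'' you postpone is where the whole argument lives, and completing it in your variables amounts to reproducing the paper's decomposition. If you want to keep your parametrisation, substitute $\beta_i=\tfrac{R_i}{r_i}\sigma_i-\tfrac{R_i}{R_{i-1}}\sigma_{i-1}$ and regroup by $\sigma_i$; you will recover the paper's $S_2$ term by term.
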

\begin{proof}
We can assume that $r_i \mid c_i \cdot F$ for each $1 \leq i \leq l$, otherwise the entire summand (equation \ref{rhs}) vanishes due to Mozgovoy's theorem (Theorem \ref{hrc}). Let $c = aE + bF$ and for each $1 \leq i \leq l$, let $c_i = r_i a_i E +  b_i F$. Note that every term in the generating function $\tilde{G}_{r,c}(q)$ is invariant under the action of tensoring by line bundles, whence, we can assume that $0 \leq a,b \leq (r-1)$. Furthermore, we define $s_i = \sum_{j=i}^l b_j$ for all $1 \leq i \leq l$.

Following Manschot \cite{man}[Proof of Proposition 4.1] we see that 
\begin{equation*}
\begin{aligned}
r\Delta - \sum_{i=1}^l r_i \Delta_i &= \sum_{i=2}^l \frac{r_i}{2 \left( \sum_{j=1}^i r_j \right) \left( \sum_{j=1}^{i-1} r_j \right)}\left( \sum_{j=1}^{i-1} r_j(a_i - a_j) \right)^2 \; - \sum_{i=2}^l (a_i - a_{i-1})s_i \;  \\
& \qquad \qquad \qquad \qquad + \, b \sum_{i=2}^l \frac{\sum_{j=1}^{i-1} r_i r_j(a_i - a_j)}{\left( \sum_{j=1}^i r_j \right) \left( \sum_{j=1}^{i-1} r_j \right)}
\end{aligned}
\end{equation*}
Similarly, following Manschot \cite{man}[Proof of Proposition 4.1] we see that
\begin{align*}
\sum_{i<j} r_i r_j (\mu_j - \mu_i) \cdot K_{\F} = \sum_{i<j} r_i r_j (a_i - a_j) -2 \sum_{i=2}^l (r_i + r_{i-1}) s_i + 2(r - r_1)b
\end{align*}
Using these two equations we get 
\begin{equation}\label{eqn16}
\begin{aligned}
& 2(r-1) \left( r\Delta - \sum_{i=1}^l r_i \Delta_i \right) + \left( \sum_{i<j} r_i r_j (\mu_j - \mu_i) \cdot K_{\F} \right) = \\
& \left\lbrace 2(r-1) \sum_{i=2}^l \frac{r_i}{2 \left( \sum_{j=1}^i r_j \right) \left( \sum_{j=1}^{i-1} r_j \right)}\left( \sum_{j=1}^{i-1} r_j(a_i - a_j) \right)^2 +\sum_{i<j} r_i r_j (a_i - a_j)   \right\rbrace \\
& \qquad \qquad \qquad + \left\lbrace 2(r-1)b \sum_{i=2}^l \frac{\sum_{j=1}^{i-1} r_i r_j(a_i - a_j)}{\left( \sum_{j=1}^i r_j \right) \left( \sum_{j=1}^{i-1} r_j \right)}  \right. \; + \\
& \left. -2(r-1) \sum_{i=2}^l (a_i - a_{i-1}) s_i - 2 \sum_{i=2}^l (r_i + r_{i-1}) s_i  + 2 (r-r_1) b \right\rbrace
\end{aligned}
\end{equation}

We would like to show that both the first and second summand of right hand side of equation \ref{eqn16} are bounded below. Let us call the first summand $S_1$ and the second summand $S_2$.

We now proceed to scrutinize $S_1$ to determine its lower bound. We are going to use the following identity of Manschot \cite{man}[Proof of Proposition 4.1] 
\begin{equation}\label{iden1}
\sum_{i=2}^l \frac{r_i}{2 \left( \sum_{j=1}^i r_j \right) \left( \sum_{j=1}^{i-1} r_j \right)}\left( \sum_{j=1}^{i-1} r_j(a_i - a_j) \right)^2 = \frac{1}{2r}\left( \sum_{i=1}^l r_i (r-r_i)a_i^2 - 2 \sum_{1 \leq i<j \leq l} r_i r_j a_i a_j \right)
\end{equation}
Since $a = \sum_{i=1}^l r_i a_i$, it follows from equation \ref{iden1} that 
\[ S_1 = (r-1)\sum_{i=1}^l r_i a_i^2 - \frac{r-1}{r} a^2 + \sum_{i=1}^l a_i r_i \left(\sum_{j=i+1}^l r_j - \sum_{j=1}^{i-1} r_j \right) \]
Consider the smooth polynomial function 
\[ f(x_1, \cdots, x_l) = \sum_{i=1}^l r_i x_i^2 - \frac{1}{r}a^2 + \sum_{i=1}^l x_i \frac{r_i}{r-1}\left( \sum_{j=i+1}^l r_j - \sum_{j=1}^{i-1}r_j\right) \]
Clearly, the Hessian of $f$, given by $\left( \frac{\partial^2 f}{\partial x_j \partial x_i} \right)$ is positive definite. We define 
\[ g(x_1, \cdots, x_l) = \sum_{i=1}^l r_i x_i - a \]
Our goal is to minimize $f$ along  the locus of $g=0$ for integer values of the $x_i$'s. Using the Lagrange's multiplier method, we see that $f$ assumes minima at 
\[ a_i = \frac{a}{r} - \frac{1}{2(r-1)}\left(\sum_{j=i+1}^l r_j - \sum_{j=1}^{i-1} r_j\right) , \qquad \text{ for } i=1, \cdots, l \]
Clearly $\left\vert \sum_{j=i+1}^l r_j - \sum_{j=1}^{i-1}r_j \right\vert \leq (r-1)$, and hence we get $\frac{a}{r}-\frac{1}{2} \leq a_i \leq \frac{a}{r} + \frac{1}{2}$ for all $1 \leq i \leq l$. Thus, to find a lower bound for $S_1$ we need to find the minimum value of $f$ when $x_i \in \left\lbrace -1,0,1,2 \right\rbrace$ for all $1 \leq i \leq l$. We have the following partition 
\[ \left\lbrace 1, \cdots, l \right\rbrace = \left\lbrace i_\alpha \right\rbrace_{1\leq \alpha \leq p} \cup \left\lbrace j_\beta \right\rbrace_{1\leq \beta \leq q} \cup \left\lbrace k_\gamma \right\rbrace_{1\leq \gamma \leq s} \cup \left\lbrace m_\delta \right\rbrace_{1 \leq \delta \leq t} \]
where $x_{i_\alpha} = -1$, $x_{j_\beta} = 1$, $x_{k_\gamma} = 2$, and $x_{m_\delta} = 0$. We see that 
\begin{align}\label{s1}
\begin{aligned}
r(r-1)f &= (12 r - 9) \left( \sum_{i_\alpha > k_\gamma} r_{i_\alpha} r_{k_\gamma} \right) + (6r-4)\left(\sum_{i_\alpha > j_\beta} r_{i_\alpha} r_{j_\beta} + \sum_{k_\gamma < m_\delta} r_{k_\gamma} r_{m_\delta}\right) \\
&  + (2r-1) \left( \sum_{i_\alpha >m_\delta} r_{i_\alpha}r_{m_\delta} + \sum_{j_\beta > k_\gamma} r_{j_\beta} r_{k_\gamma} + \sum_{j_\beta < m_\delta} r_{j_\beta}r_{m_\delta} \right)\\
&+ (6r-9) \left( \sum_{i_\alpha < k_\gamma} r_{i_\alpha}r_{k_\gamma} \right) + (2r-4)\left(\sum_{i_\alpha < j_\beta} r_{i_\alpha} r_{j_\beta} + \sum_{k_\gamma >m_\delta} r_{k_\gamma} r_{m_\delta} \right) \\
&+ (-1)\left( \sum_{i_\alpha < m_\delta}r_{i_\alpha}r_{m_\delta} + \sum_{j_\beta < k_\gamma}r_{j_\beta}r_{k_\gamma} + \sum_{j_\beta > m_\delta}r_{j_\beta}r_{m_\delta}\right)
\end{aligned}
\end{align}
Note that since $r\geq 2$ all the summands in equation \ref{s1} except the last one have non-negative coefficient. By further examining the summands with non-negative coefficient, we see that together they must be bounded below by $(2r-4)$ because all the inequalities in the summations cannot be simultaneously compatible. Moreover, the negative summand is bounded below by $-(r^2 - r)$. Hence, $S_1$ is bounded below by $-r+3 -\frac{4}{r}$.

Our next goal is to determine a lower bound for $S_2$. We are going to use the following identities of Manschot \cite{man}[Proof of Proposition 4.1] 
\begin{equation}\label{id2}
\sum_{i=2}^l \frac{r_i}{\left( \sum_{j=1}^i r_j \right) \left( \sum_{j=1}^{i-1}r_j \right)} \left( \sum_{j=1}^{i-1} r_j(a_i - a_j) \right) = \frac{1}{r}\left( \sum_{i=2}^l (a_i - a_{i-1})\left(\sum_{j=i}^l r_j \right)\right)
\end{equation}
and 
\begin{equation}\label{id3}
\sum_{i=2}^l (r_i + r_{i-1})\left(\sum_{j=i}^l r_j\right) = (r-r_1)r
\end{equation}
The identities in equations \ref{id2} and \ref{id3} yields
\[ S_2 = 2\sum_{i=2}^l \left( (r-1)(a_i - a_{i-1}) + (r_i + r_{i-1})\right) \left( \frac{b}{r}\left( \sum_{j=i}^l r_j \right) - s_i \right) \]
Following Coskun and Woolf \cite{cos}[Proof of Theorem 5.4], we interpret the definition of $S(\{\gamma_\bullet \};F,$ $ E+F)$  (equation \ref{defnSmu}) in our current situation, we obtain for all $2 \leq i \leq l$
\begin{align}\label{defnSmuIn}
\begin{aligned}
& \qquad \text{A) } (a_i - a_{i-1}) <0 \text{ and } s_i \geq \frac{b}{r}\left( \sum_{j=i}^l r_j  \right) \\
& \qquad \text{B) } (a_i - a_{i-1}) \geq 0 \text{ and } s_i < \frac{b}{r} \left( \sum_{j =i}^l r_j  \right) 
\end{aligned}
\end{align}
In Case A, we see that $(r-1)(a_i - a_{i-1}) + r_i + r_{i-1} \leq 0$ except when $l=2$ and $a_2 - a_1 = -1$, which is not possible by our assumption. Hence, the term 
\begin{equation}\label{s2term}
\left( (r-1)(a_i - a_{i-1}) + (r_i + r_{i-1})\right) \left( \frac{b}{r}\left( \sum_{j=i}^l r_j \right) - s_i \right)
\end{equation}
is non-negative. 

Similarly, in Case B, we see that $(r-1)(a_i - a_{i-1}) + r_i + r_{i-1} \geq (r_i + r_{i-1})$, hence the term in equation \ref{s2term} is non-negative. Additionally, by using the fact that $s_i$ are integers, it follows from equation \ref{defnSmuIn} that we have a slightly better bound of equation \ref{s2term} 
\[ \left\vert (r-1) (a_i - a_{i-1}) + r_i + r_{i-1} \right\vert \left( 1 - sgn\left(a_i - a_{i-1} + \frac{1}{2}\right) \left( 1 - 2 \left\lbrace - \frac{b}{r}\sum_{j=i}^l r_j \right\rbrace\right)\right) \]
where $sgn$ is the sign function and $\lbrace \bullet \rbrace$ is the fractional part of any real number.

In conclusion, we can take $\kappa$ to be 
\begin{align}\label{kapval}
\begin{aligned}
& -r + 3 - \frac{4}{r}\\
&+ \sum_{i=2}^l \left\vert (r-1) (a_i - a_{i-1}) + r_i + r_{i-1} \right\vert \left( 1 - sgn\left(a_i - a_{i-1} + \frac{1}{2}\right) \left( 1 - 2 \left\lbrace - \frac{b}{r}\sum_{j=i}^l r_j \right\rbrace\right)\right)
\end{aligned}
\end{align}
which is our lower bound for equation \ref{lbterm}.
\end{proof}

Now that we have shown that for $\tilde{\Delta} \gg \tilde{N}$, the coefficient of $\L^{-\tilde{N}}q^{\tilde{\Delta}}$ in $(1-q)\tilde{G}_{r,\tilde{c}}(q)$ vanishes (see Proposition \ref{boundMF}), our goal is to relate $G_{r,c}(q)$ with $\tilde{G}_{r,\tilde{c}}$ using the blow-up formula, and conclude a similar result for $G_{r,c}(q)$.
\section{\textsc{Estimating the generating function $G_{r,c}(q)$ when rank is at least two}}\label{section5}
In this section, our goal is to show that there is a constant $C$ depending only on $r$ and $c$ such that when $\Delta > N + C$, the coefficient of $\L^{-N}q^\Delta$ in $(1-q)G_{r,c}(q)$ is zero. To show this, we are going to look at the blow-up $\F \rarrow \P$ and use the blow-up formula due to Mozgovoy \cite{moz}[Proposition 7.3] to relate the generating functions $G_{r,c}(q)$ and $\tilde{G}_{r,\tilde{c}}(q)$ (see equation \ref{blowupeqn}) in $A^-$. We are going to scrutinize the terms appearing in this relation, and use Proposition \ref{boundMF} to derive our inequality (see Theorem \ref{clm14}).

Recall from section \ref{section2} that we have a blow-up $\F \rarrow \P$ at point $p \in \P$. Let $\gamma = (r,c,\Delta)$ be a Chern character on $\P$. Let $m$ be the multiplicity of $c$ at the point $p$. Let $\tilde{\gamma} = (r, c - mE, \tilde{\Delta})$ be a Chern character on $\F$. The blow-up formula due to Mozgovoy \cite{moz}[Proposition 7.3] is the following equation 
\begin{equation}
\label{blowup}
\sum_{ch_2} [\MF (r,c-mE,ch_2)] q^{-ch_2} = F_m(q) \sum_{ch_2} [\MP (r, c , ch_2)] q^{- ch_2}
\end{equation}
where 
\begin{equation}
\label{Fm}
F_m(q) = \left( \prod_{k=1}^\infty \frac{1}{(1 - \L^{rk}q^k)^r} \right) \left( \sum_{\substack{ \sum_{i=1}^r a_i = 0, \\ a_i \in \mathbb{Z} + \frac{m}{r}}} \L^{\sum_{i<j} \binom{a_j - a_i}{2}} q^{- \sum_{i<j} a_i a_j} \right)
\end{equation}
Note that on $\P$, we have $-ch_2(\gamma) = r \Delta - \frac{c^2}{2r}$, while on $\F$, we have $-ch_2(\tilde{\gamma}) = r \tilde{\Delta} - \frac{c^2}{2r} + \frac{m^2}{2r}$. Hence, we can rewrite the blow-up equation (equation \ref{blowup}) 
\begin{equation}\label{eqn19}
\sum_{\Delta \geq 0} [\MP (r, c, \Delta)] q^{r \Delta} = \frac{q^{\frac{m^2}{2r}}}{F_m(q)} \sum_{\tilde{\Delta} \geq 0} [\MF (r,c-mE,\tilde{\Delta})] q^{r\tilde{\Delta}}
\end{equation}
Replacing $q$ by $\L^{-2r}q$ and multiplying both sides by $\L^{r^2}$ in equation \ref{eqn19} yields
\begin{equation}\label{blowupeqn}
G_{r,c}(q) = \frac{(\L^{-2r}q)^{\frac{m^2}{2r}}}{F_m(\L^{-2r}q)} \tilde{G}_{r,c-mE}(q)
\end{equation}
It follows from equation \ref{blowupeqn} that in order to achieve our goal, we need to analyze $F_m(\L^{-2r}q)$ and find an estimate for $\Delta$ in this expression.

By examining the definition of $F_m$ in equation \ref{Fm}, we conclude that it depends only on the remainder of $m$ modulo $r$, which we shall denote by $\bar{m}$, which we will think of as an integer between $0$ and $r-1$. 

We see that 
\begin{equation}
\label{FmL}
F_{\bar{m}} (\L^{-2r}q) = \prod_{k=1}^\infty \frac{1}{(1 - \L^{-rk}q^k)^r} \,\sum_{\substack{ \sum_{i=1}^r a_i = 0, \\ a_i \in \mathbb{Z} + \frac{\bar{m}}{r}}} 
\L^{\sum_{i<j} \binom{a_j - a_i}{2} + 2r \sum_{i<j}a_i a_j } q^{- \sum_{i<j}a_i a_j}
\end{equation}

Since $\sum_{i=1}^r a_i = 0$, we see that 
\[ - \sum_{1 \leq i<j \leq r} a_i a_j = \frac{1}{2} \sum_{i=1}^r a_i^2 \]
and 
\[ \sum_{1 \leq i<j \leq r} \binom{a_j - a_i}{2} + 2r \sum_{1 \leq i<j \leq r} a_i a_j = - \frac{r}{2} \left(\sum_{i=1}^r a_i^2 \right) - \left(\sum_{i=1}^r i a_i \right) \]

We now use the following substitutions 
\begin{align*}
\qquad\qquad a_i &= b_i + \frac{\bar{m}}{r} \, , \text{ where } b_i \in \mathbb{Z}, \, \text{ for } 1 \leq i \leq r-1,  \\
\qquad\qquad a_r &= - \sum_{i=1}^{r-1} \left( b_i + \frac{\mb}{r} \right)
\end{align*}
These substitutions yield the following equations 
\begin{equation}\label{eqn23}
\begin{aligned}
- \frac{r}{2} \left(\sum_{i=1}^r a_i^2 \right) - \left(\sum_{i=1}^r i a_i \right) &= -r \left( - \frac{\mb^2}{2r} + \frac{\mb^2}{2} + \sum_{i=1}^{r-1} b_i^2 + \mb \sum_{i=1}^{r-1} b_i + \sum_{1 \leq i<j \leq (r-1)} b_i b_j \right) \\
& \qquad + \left( \frac{ (r-1) \mb}{2}  + \sum_{i=1}^{r-1} (r-i)b_i \right) \\
\frac{1}{2} \sum_{i=1}^r a_i^2 &= \left( \frac{- \mb^2}{2r} + \frac{\mb^2}{2} + \sum_{i=1}^{r-1} b_i^2 + \mb \sum_{i=1}^{r-1} b_i + \sum_{1 \leq i<j \leq (r-1)} b_i b_j \right)
\end{aligned}
\end{equation}

Employing the above equations \ref{eqn23} leads to the following expression for $F_{\mb}(\L^{-2r}q)$
\begin{align*}
F_{\mb}(\L^{-2r}q) &= \left(\prod_{k=1}^\infty \frac{1}{(1 - \L^{-rk}q^k)^r} \right) \left( \L^{-r}q \right)^{- \frac{(r+1) \mb^2}{2r} } \L^{\frac{ (r-1) \mb}{2}} \; \times \\
& \sum_{b_1, \cdots, b_{r-1} \in \mathbb{Z}} \L^{\sum_{i=1}^{r-1} (r-i)b_i} \left( \L^{-r}q \right)^{ \mb^2 + \sum_{i=1}^{r-1}b_i^2 + \mb \sum_{i=1}^{r-1} b_i + \sum_{i<j} b_i b_j}
\end{align*}

For sake of convenience, we define 
\begin{align}\label{Lambda}
\Lambda_d^{(\mb)} = \sum_{\substack{ b_1, \cdots, b_{r-1} \in \mathbb{Z}, \\ \mb^2 + \sum_{i=1}^{r-1} b_i^2 + \mb \sum_{i=1}^{r-1} b_i + \sum_{i<j} b_i b_j = d}} \L^{\sum_{j=1}^{r-1}(r-j)b_j}
\end{align}
Thus, we can think of the last  summation term of $F_{\mb}(\L^{-2r}q)$ as a power series
\begin{align}\label{FmLL}
F_{\mb}(\L^{-2r}q) = \left( \prod_{k=1}^\infty \frac{1}{(1 - \L^{-rk}q^k)^r} \right) \left( \L^{-r}q \right)^{ - \frac{(r+1)\mb^2}{2r}} \L^{\frac{(r-1)\mb}{2}} \left( \sum_{d=0}^\infty \Lambda_d^{(\mb)} (\L) \left( \L^{-r}q \right)^d \right)
\end{align}
\begin{rem}\label{rmk8}
Recall that any power series of the form $f(x) = 1 + a_1 x + a_2 x^2 + \cdots$ is invertible, and its inverse is given by $1 + b_1 x + b_2 x^2 + \cdots$, where for any positive integer $n$, we have 
\[ b_n = \sum_{\substack{n_1 + \cdots + n_l = n \\ n_i \in \mathbb{Z}_{>0}}} (-1)^l a_{n_1} \cdots a_{n_l} \]
\end{rem}
To analyze $G_{r,c}(q)$, we need to invert $F_{\mb}(\L^{-2r}q)$ (equation \ref{blowupeqn}), and a posteriori, we need to invert the power series $\sum_{d =0}^\infty \Lambda_d^{(\mb)}(\L) (\L^{-r}q)^d$. To do this, we need to figure out the least non-negative integer $d$ such that $\Lambda_d^{(\mb)}(\L)$  is nonzero. 

\begin{lem}\label{clm9}
The smallest non-negative integer $d$ for which $\Lambda_d^{(\mb)}$ is nonzero, is $\frac{\mb^2 + \mb}{2}$. Additionally, 
\[ \Lambda_{\frac{\mb^2 + \mb}{2}}^{(\mb)} (\L) = \L^{-r \mb} \; \sum_{\nu = \frac{\mb^2 + \mb}{2}}^{r \mb - \frac{\mb^2 - \mb}{2}} \rho_\nu \L^{\nu} \]
where $\rho_\nu$ is the cardinality of the set $\left\lbrace \left(j_1, \cdots, j_{\mb} \right) \,\vert\, 1 \leq j_1 < \cdots < j_{\mb} \leq r, \, j_1 + \cdots + j_{\mb} = \nu \right\rbrace$, when $\nu$ is a positive integer, and $\rho_0 = 1$.
\end{lem}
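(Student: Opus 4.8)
The plan is to work directly with the quadratic form $Q(\mb;b_1,\dots,b_{r-1}) = \mb^2 + \sum_{i=1}^{r-1} b_i^2 + \mb\sum_{i=1}^{r-1} b_i + \sum_{i<j} b_i b_j$ that appears in the exponent, and to recognize it as (a specialization of) the standard $A_{r-1}$-type quadratic form. First I would introduce the variables $a_1,\dots,a_r$ with $a_i = b_i + \mb/r$ for $i \le r-1$ and $a_r = -\sum_{i=1}^{r-1}(b_i + \mb/r)$, as already done in the excerpt, so that $Q = \frac12\sum_{i=1}^r a_i^2$ subject to $\sum a_i = 0$ and $a_i \in \mathbb{Z} + \mb/r$. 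Writing $a_i = c_i - \mb/r$ would be cleaner: with $c_i \in \mathbb{Z}$, the constraint $\sum a_i = 0$ becomes $\sum_{i=1}^r c_i = \mb$. Then $2Q = \sum (c_i - \mb/r)^2 = \sum c_i^2 - \mb^2/r$, so minimizing $Q$ over integer vectors $(c_1,\dots,c_r)$ with $\sum c_i = \mb$ is the same as minimizing $\sum_{i=1}^r c_i^2$ subject to $\sum c_i = \mb$, $c_i \in \mathbb{Z}$. Since $0 \le \mb \le r-1$, the minimum of $\sum c_i^2$ is exactly $\mb$, attained precisely when $\mb$ of the $c_i$ equal $1$ and the remaining $r-\mb$ equal $0$. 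Plugging back, the minimal value of $Q$ is $\tfrac12(\mb - \mb^2/r) = \tfrac{\mb(r-\mb)}{2r}$ — wait, I need to reconcile this with the claimed answer $\tfrac{\mb^2+\mb}{2}$; the discrepancy comes from the normalization of the exponent of $q$ in \ref{FmLL} versus \ref{Lambda}, so I would be careful to track exactly which quantity $\Lambda_d^{(\mb)}$ indexes. The key point either way is that the minimizing integer configurations are exactly the characteristic vectors of $\mb$-element subsets $\{j_1 < \dots < j_{\mb}\} \subseteq \{1,\dots,r\}$, i.e. $c_{j_k}=1$ and $c_i = 0$ otherwise.

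Once the minimizers are identified, computing the coefficient $\Lambda_{\min}^{(\mb)}(\L)$ is bookkeeping: for a minimizing configuration corresponding to the subset $\{j_1<\dots<j_{\mb}\}$, I need to evaluate the monomial $\L^{\sum_{j=1}^{r-1}(r-j)b_j}$. Here I must translate the $c_i$ (or $a_i$) description back into the $b_j$ variables via $b_j = c_j - \text{(something)}$; since $a_r$ is the dependent variable, the exponent $\sum_{j=1}^{r-1}(r-j)b_j$ has to be re-expressed symmetrically in all $r$ indices. A clean way: note $\sum_{j=1}^{r-1}(r-j)b_j$ differs from $\sum_{j=1}^{r}(r-j)a_j$ (with $a_j = b_j + \mb/r$) by a constant and by a multiple of $a_r$, and $\sum_{j=1}^r (r-j) a_j = r\sum a_j - \sum_{j=1}^r j a_j = -\sum_{j=1}^r j a_j$ using $\sum a_j = 0$. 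So up to an overall power of $\L$ (accounting for the $\mb/r$ shifts, which should produce the $\L^{-r\mb}$ prefactor), the exponent is $-\sum_{j=1}^r j a_j = -\sum_{k=1}^{\mb} j_k + \text{const}$. Summing $\L^{(\text{const}) - \sum j_k}$ over all $\mb$-subsets $\{j_1<\dots<j_{\mb}\}$ of $\{1,\dots,r\}$ groups terms by the value $\nu = \sum_k j_k$, which ranges from $1+2+\dots+\mb = \tfrac{\mb^2+\mb}{2}$ to $(r-\mb+1)+\dots+r = r\mb - \tfrac{\mb^2-\mb}{2}$, each occurring with multiplicity $\rho_\nu$ as defined. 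This reproduces the stated formula.

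The main obstacle I anticipate is purely the careful constant-chasing in the last step: matching the prefactor $\L^{-r\mb}$ and the precise range of $\nu$ requires getting every shift right when passing between the $b_j$, $a_j$, and $c_j$ coordinate systems and between the exponent conventions of \ref{Lambda} and \ref{eqn23}. The conceptual heart — that the minimum of a positive-definite integral quadratic form of $A_{r-1}$-type with a linear shift is attained exactly at the indicator vectors of $\mb$-subsets — is straightforward once the form is diagonalized via the $a_i$ substitution; I would present that cleanly first (it is essentially "balance the $c_i$ as evenly as possible given $\sum c_i = \mb$ and $0 \le \mb < r$"), and then relegate the coefficient computation to a direct expansion, double-checking against the smallest cases $\mb = 0$ (where $\rho_0 = 1$ and $\Lambda_0^{(0)} = 1$) and $\mb = 1$.
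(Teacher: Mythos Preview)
Your approach is correct and closely parallels the paper's proof. The paper works directly with the asymmetric variables $b_1,\dots,b_{r-1}$, rewrites the quadratic form as $\tfrac12\bigl(\mb^2 + \sum b_i^2 + (\mb+\sum b_i)^2\bigr)$, invokes a separate Hessian/Lagrange-multiplier lemma (Lemma~\ref{lowerbound2}) to locate the real minimum at $b_i = -\mb/r$, and then checks integer points with $b_i\in\{-1,0\}$; it finds that the minimum is attained when either $\mb-1$ or $\mb$ of the $b_i$ equal $-1$, and sums those two families to obtain the formula. Your symmetric reparametrization via $c_1,\dots,c_r\in\mathbb{Z}$ with $\sum c_i$ fixed is cleaner: it reduces the minimization to the elementary fact that $\sum c_i^2$ subject to a fixed integer sum is minimized by balancing, it unifies the paper's two cases as a single sum over $\mb$-subsets of $\{1,\dots,r\}$ (the case $r\in\{j_1,\dots,j_{\mb}\}$ corresponds to only $\mb-1$ of the $b_i$ being $-1$), and it avoids Lemma~\ref{lowerbound2} entirely.

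Two small corrections to your bookkeeping. First, from $a_i\in\mathbb{Z}+\mb/r$ the integral shift is $c_i = a_i - \mb/r$ (not $a_i = c_i - \mb/r$), so $\sum c_i = -\mb$ and the minimizing configurations have $c_{j_k}=-1$; with this sign the exponent computation gives $\sum_{j=1}^{r-1}(r-j)b_j = -r\mb + \sum_k j_k$ on the nose, yielding the prefactor $\L^{-r\mb}$ directly. Second, the discrepancy you flagged is not a normalization difference between (\ref{FmLL}) and (\ref{Lambda}) but the constant offset $Q = \tfrac12\sum a_i^2 + \tfrac{(r+1)\mb^2}{2r}$, which is read off from (\ref{eqn23}); adding this constant to your $\tfrac{\mb(r-\mb)}{2r}$ gives exactly $\tfrac{\mb^2+\mb}{2}$.
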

\begin{proof}
Note that 
\[ \mb^2 + \sum_{i=1}^{r-1} b_i^2 + \mb \sum_{i=1}^{r-1} b_i + \sum_{i<j} b_i b_j = \frac{1}{2} \left( \mb^2 + \sum_{i=1}^{r-1}b_i^2 + \left( \mb + \sum_{i=1}^{r-1} b_i \right)^2 \right) \]
Consequently, we need to figure out the smallest value of $\mb^2 + \sum_{i=1}^{r-1} b_i^2 + \left( \mb + \sum_{i=1}^{r-1} b_i \right)^2$, where $b_i \in \mathbb{Z}$ for all $1 \leq i \leq r-1$. 

If $\mb = 0$, we see that the equation $\sum_{i=1}^{r-1} b_i^2 + \left( \sum_{i=1}^{r-1} b_i \right)^2 = 0$ has only one solution, the trivial one. Thus, $\Lambda_0^{(0)}(\L) = 1$.

Assume $1 \leq \mb \leq r-1$. It follows from Lemma \ref{lowerbound2} (below), that the smallest value assumed by the expression $\sum_{i=1}^{r-1} b_i^2 + \left( \mb + \sum_{i=1}^{r-1} b_i \right)^2$ occurs at $b_1 = \cdots = b_{r-1} = - \frac{\mb}{r}$. As a result, we need to evaluate the expression when $b_i \in \left\lbrace -1,0 \right\rbrace$ for all $1 \leq i \leq r-1$, to figure out the minimum value of the expression for integer values. Suppose $k$ of the $b_i$'s are $(-1)$ and the remaining are zero, the expression becomes $k + \left( \mb -k \right)^2$. Clearly, the minimum value of $k + \left( \mb - k \right)^2$ for integer values of $k$ is $\mb$, which occurs when $k = \mb-1, \mb$.

In summary, when $1 \leq \mb \leq r-1$, the smallest value of the expression \[\frac{1}{2}\left( \mb^2 + \sum_{i=1}^{r-1} b_i^2 + \left( \mb + \sum_{i=1}^{r-1} b_i \right)^2 \right)\] for integer values of $b_i$ is $\frac{\mb^2 + \mb}{2}$, which occurs when $\mb-1$ or $\mb$ of the $b_i$'s are $(-1)$ and the remaining are zero. Hence, we have 
\[ \Lambda_d^{(\mb)} (\L) = \sum_{1 \leq j_1 < \cdots < j_{\mb -1} \leq r-1} \L^{j_1 + \cdots + j_{\mb-1} - (\mb-1)r} + \sum_{1 \leq j_1 < \cdots < j_{\mb} \leq r-1} \L^{j_1 + \cdots + j_{\mb} - r \mb} \]
Factoring out $\L^{-r \mb}$ leads to 
\[ \Lambda_d^{(\mb)} (\L) = \L^{-r\mb} \sum_{1 \leq j_1 < \cdots < j_{\mb} \leq r} \L^{j_1 + \cdots + j_{\mb}} \]
\end{proof}

Before proceeding further, we need to tie the loose ends of Lemma \ref{clm9} by analyzing the real valued polynomial function $y_1^2 + \cdots + y_n^2 + \left( A + y_1 + \cdots + y_n \right)^2$.
\begin{lem}\label{lowerbound2}
Consider the smooth real valued function 
\[ f(y_1, \cdots, y_n) = y_1^2 + \cdots + y_n^2 + \left( A + y_1 + \cdots + y_n \right)^2 \]
where $A$ is any real number. The Hessian of $f$ is positive definite. Furthermore, the function $f$ has a global minima at $y_1 = \cdots = y_n = - \frac{A}{n+1}$, and the minimum value for $f$ is $ \frac{A^2}{n+1}$.
\end{lem}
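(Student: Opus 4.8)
The plan is to read everything off the Hessian of $f$, which is constant since $f$ is a quadratic polynomial. First I would compute the first partials: $\frac{\partial f}{\partial y_i} = 2 y_i + 2\bigl(A + \sum_{j=1}^n y_j\bigr)$, so that $\frac{\partial^2 f}{\partial y_i \partial y_j} = 2(\delta_{ij} + 1)$. Hence the Hessian equals $2(I_n + J_n)$, where $I_n$ is the identity and $J_n = \mathbf{1}\mathbf{1}^{T}$ is the $n \times n$ all-ones matrix. Since $J_n$ is positive semidefinite with eigenvalues $n$ (with multiplicity one) and $0$ (with multiplicity $n-1$), the matrix $I_n + J_n$ has eigenvalues $n+1$ and $1$, all strictly positive, so the Hessian is positive definite.

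Because $f$ is a degree-two polynomial with positive definite Hessian, it is strictly convex, and therefore it has a unique critical point, which is its global minimum. Setting $\frac{\partial f}{\partial y_i} = 0$ for every $i$ gives $y_i = -\bigl(A + \sum_{j=1}^n y_j\bigr)$, a value independent of $i$; hence all the $y_i$ are equal to a common $y$, and solving $y = -(A + ny)$ yields $y = -\frac{A}{n+1}$. Substituting $y_1 = \cdots = y_n = -\frac{A}{n+1}$ back into $f$, the first $n$ terms contribute $n \cdot \frac{A^2}{(n+1)^2}$ and the last term is $\bigl(A - \frac{nA}{n+1}\bigr)^2 = \frac{A^2}{(n+1)^2}$, for a total of $\frac{(n+1)A^2}{(n+1)^2} = \frac{A^2}{n+1}$.

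There is no real obstacle here; the only point requiring care is the passage from \emph{local} to \emph{global} minimum, which is precisely why the positive-definiteness of the Hessian (equivalently, strict convexity of $f$) is established first. As a cleaner alternative one can bypass calculus altogether: diagonalizing the positive semidefinite quadratic form $\sum_{i=1}^n y_i^2 + \bigl(\sum_{i=1}^n y_i\bigr)^2$ and completing the square exhibits $f$ directly as a sum of squares plus the constant $\frac{A^2}{n+1}$, which simultaneously yields the minimum value and the minimizer; I would present this remark alongside the calculus argument.
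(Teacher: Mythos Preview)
Your proof is correct and follows essentially the same approach as the paper: compute the gradient and Hessian, verify positive definiteness, solve the linear system for the unique critical point, and evaluate. The only cosmetic difference is that the paper checks positive definiteness by writing out $y^{T}Hy = 2\sum_i y_i^2 + 2\bigl(\sum_i y_i\bigr)^2$ directly, whereas you identify the Hessian as $2(I_n + J_n)$ and read off its eigenvalues; both arguments are equivalent.
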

\begin{proof}
Clearly, we see that for $1 \leq k \leq n$
\[  \frac{\partial f}{\partial y_k} = 2y_k + 2\left( A + y_1 + \cdots + y_n \right) \]
Subsequently, we see that for $1 \leq l \leq n$
\[ \frac{\partial^2 f}{\partial y_l \partial y_k} = \begin{cases} 2, \text{ if } k \neq l \\ 
4, \text{ if } k = l \end{cases} \]
Let $H$ be the $n \times n$ matrix with $H_{l,k} = \frac{\partial^2 f}{\partial y_l \partial y_k}$, then we see that 
\[ \left( y_1 \; \cdots \; y_n \right) \cdot H \cdot \left( y_1 \; \cdots \; y_n \right)^{T} = 2 \left(\sum_{i=1}^n y_i^2 \right) + 2 \left( \sum_{i=1}^n y_i \right)^2 \]
Thus, $H$ is positive definite. As a consequence, $f$ has a global minimum when $\frac{\partial f}{\partial y_k} = 0$ for all $1 \leq k \leq n$. This system of linear equations has a unique solution $y_1 = \cdots = y_n = -\frac{A}{n+1} $. It follows that the minimum value for $f$ is $\frac{A^2}{n+1}$.
\end{proof}

Returning back to our track, we still need to analyze $F_{\mb}(\L^{-2r}q)$. Using equation \ref{FmLL} and Lemma \ref{clm9}, we see that 
\begin{align*}
F_{\mb}(\L^{-2r}q) &= \left( \prod_{k=1}^\infty \frac{1}{(1 - \L^{-rk}q^k)^r} \right) \left(\L^{-r}q \right)^{- \frac{(r+1) \mb^2}{2r}} \L^{ \frac{(r-1) \mb}{2}} \\
& \qquad \qquad\Lambda_{\frac{\mb^2 + \mb}{2}}^{(\mb)} (\L) \left( \L^{-r}q \right)^{\frac{\mb^2 + \mb}{2}} 
\sum_{d=0}^\infty \tilde{\Lambda}_{d}^{(\mb)} (\L) \left( \L^{-r}q \right)^d
\end{align*}
where $\tilde{\Lambda}_d^{(\mb)}(\L) = \left( \Lambda_{\frac{\mb^2 + \mb}{2}}^{(\mb)}(\L)\right)^{-1} \cdot \Lambda_{d + \frac{\mb^2 + \mb}{2}}^{(\mb)}(\L)$.

Finally, using remark \ref{rmk8}, we can invert $F_{\mb}(\L^{-2r}q)$.
\begin{equation}
\label{FmLinv}
\begin{aligned}
\left(F_{\mb} (\L^{-2r}q)\right)^{-1} &= \left( \prod_{k=1}^\infty (1 - \L^{-rk}q^k)^r \right) \left( \L^{-r}q \right)^{-\frac{r \mb - \mb^2}{2r}} \L^{-\frac{(r-1)\mb}{2}} \left( \Lambda_{\frac{\mb^2 + \mb}{2}}^{(\mb)} (\L) \right)^{-1} \\
& \qquad \qquad
\left( 1 + \sum_{d =1}^\infty \left( \sum_{\substack{d_1, \cdots, d_l \in \mathbb{Z}_{>0} \\ d_1 + \cdots + d_l = d}}(-1)^{l} \prod_{i=1}^l \tilde{\Lambda}_{d_i}^{(\mb)} \right) \left( \L^{-r}q\right)^{d} \right)
\end{aligned}
\end{equation}

Before tackling $G_{r,c}(q)$, we would like to analyze $F_{\mb}(\L^{-2r}q)^{-1}$ and produce bounds for $\Delta$ such that the coefficient of $\L^{-N}q^\Delta$ vanishes.
\begin{lem}\label{clm11}
If $\Delta > N - \frac{(r-\mb)\mb}{2r}$, then the coefficient of $\L^{-N}q^\Delta$ in $F_{\mb}(\L^{-2r}q)^{-1}$ is zero.
\end{lem}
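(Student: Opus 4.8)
The plan is to read off from equation \ref{FmLinv} a uniform upper bound on the exponents of the monomials of $F_{\mb}(\L^{-2r}q)^{-1}$. Precisely, I would show that every monomial $\L^{A}q^{B}$ occurring in $F_{\mb}(\L^{-2r}q)^{-1}$ satisfies $A+B\leq -\tfrac{(r-\mb)\mb}{2r}$. Once this is established, a monomial $\L^{-N}q^{\Delta}$ can have nonzero coefficient only when $-N+\Delta\leq-\tfrac{(r-\mb)\mb}{2r}$, i.e.\ $\Delta\leq N-\tfrac{(r-\mb)\mb}{2r}$; hence for $\Delta>N-\tfrac{(r-\mb)\mb}{2r}$ the coefficient vanishes, which is the assertion. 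The bound on $A+B$ is obtained by treating the five factors in equation \ref{FmLinv} separately and adding up their contributions.

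Three of the factors are immediate. In $\prod_{k\geq1}(1-\L^{-rk}q^{k})^{r}$ every monomial has the form $\L^{-rj}q^{j}$ with $j\geq 0$, contributing $-rj+j=-(r-1)j\leq0$; the factor $(\L^{-r}q)^{-\frac{r\mb-\mb^{2}}{2r}}$ contributes exactly $(r-1)\tfrac{r\mb-\mb^{2}}{2r}$; and $\L^{-\frac{(r-1)\mb}{2}}$ contributes exactly $-\tfrac{(r-1)\mb}{2}$. So everything reduces to controlling the $\L$-exponents in the two remaining factors, $\bigl(\Lambda^{(\mb)}_{(\mb^{2}+\mb)/2}\bigr)^{-1}$ and $1+\sum_{d\geq1}\bigl(\sum_{d_{1}+\cdots+d_{l}=d}(-1)^{l}\prod_{i}\tilde\Lambda^{(\mb)}_{d_{i}}\bigr)(\L^{-r}q)^{d}$. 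For the first, Lemma \ref{clm9} identifies the top $\L$-power of $\Lambda^{(\mb)}_{(\mb^{2}+\mb)/2}$ as $\L^{-\binom{\mb}{2}}$, occurring with coefficient $1$, so by Remark \ref{rmk8} its inverse has all $\L$-exponents $\leq\binom{\mb}{2}$. For the correction factor, using $\tilde\Lambda^{(\mb)}_{d}=\bigl(\Lambda^{(\mb)}_{(\mb^{2}+\mb)/2}\bigr)^{-1}\Lambda^{(\mb)}_{d+(\mb^{2}+\mb)/2}$ together with the shape of $\Lambda^{(\mb)}_{e}$ from equation \ref{Lambda}, the whole matter comes down to the inequality
\[ L(b)\ \leq\ (r-1)\,Q(b)-\frac{\mb(r\mb+r-2)}{2}\qquad\text{for all }b\in\mathbb{Z}^{r-1}, \]
where $L(b)=\sum_{i=1}^{r-1}(r-i)b_{i}$ and $Q(b)=\mb^{2}+\sum_{i}b_{i}^{2}+\mb\sum_{i}b_{i}+\sum_{i<j}b_{i}b_{j}$ are the linear and quadratic forms of equation \ref{Lambda}. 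Granting it, $\tilde\Lambda^{(\mb)}_{d}$ has $\L$-degree $\leq(r-1)d$, hence $\prod_{i}\tilde\Lambda^{(\mb)}_{d_{i}}$ has $\L$-degree $\leq(r-1)d$, so each term $(\cdots)(\L^{-r}q)^{d}$ of the correction factor has $A+B\leq(r-1)d-rd+d=0$; thus that factor contributes $\leq0$. Adding the five contributions gives $A+B\leq \tfrac{(r-1)(r\mb-\mb^{2})}{2r}-\tfrac{(r-1)\mb}{2}+\binom{\mb}{2}=-\tfrac{(r-\mb)\mb}{2r}$, as required.

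It remains to prove the displayed quadratic inequality, and this is where the real work lies; I would carry it out exactly as in the proofs of Lemmas \ref{lowerbound} and \ref{lowerbound2}. The function $b\mapsto(r-1)Q(b)-L(b)$ is a quadratic whose Hessian is $(r-1)$ times the positive-definite Gram matrix of $Q$, so it has a unique real minimum; one locates it by Lagrange multipliers (equivalently by completing the square via $Q(b)=\tfrac12[(\mb+\sum_{i}b_{i})^{2}+\mb^{2}+\sum_{i}b_{i}^{2}]$), observes that every coordinate of the minimizer lies in an interval of length less than $2$, so that the minimum over $\mathbb{Z}^{r-1}$ is attained with each $b_{i}$ in a two-element set, and finishes with a finite case analysis — splitting on how many of the $b_{i}$ take each value, in the style of equation \ref{s1} — confirming that the minimum value is $\geq\tfrac{\mb(r\mb+r-2)}{2}$, with equality realised by the configuration that produces the top term of $\Lambda^{(\mb)}_{(\mb^{2}+\mb)/2}$. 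The main obstacle is thus organising this finite integer optimisation cleanly; a secondary point of care is verifying that the $\L$-degree bounds genuinely propagate through the power-series inversions in equation \ref{FmLinv}, for which Remark \ref{rmk8} is the right tool.
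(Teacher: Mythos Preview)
Your proposal is correct and follows essentially the same route as the paper: you reduce everything to the quadratic inequality $L(b)\leq(r-1)Q(b)-\tfrac{\mb(r\mb+r-2)}{2}$, which is exactly the content of the paper's Lemma~\ref{clm12} (rewritten using $d=Q(b)-\tfrac{\mb^{2}+\mb}{2}$), and you propose to prove it by the same positive-definite-Hessian plus finite case analysis that the paper carries out there. Your bookkeeping via the single quantity $A+B$ is a slight repackaging of the paper's pair-of-equations argument, but the substance and the key lemma are identical.
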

\begin{proof}
We are going to produce an expression for $\left(\Lambda_{\frac{\mb^2 + \mb}{2}}^{(\mb)}(\L)\right)^{-1}$, and use it alongwith the expression for $F_{\mb}(\L^{-2r}q)^{-1}$ (see equation \ref{FmLinv}) to determine the bound for $\Delta$.

Using Lemma \ref{clm9} and factoring $\L^{r \mb - \frac{\mb^2 - \mb}{2}}$, we get
\[ \Lambda_{\frac{\mb^2 + \mb}{2}}^{(\mb)}(\L) = \L^{-\frac{\mb^2 - \mb}{2}} \,\sum_{\nu = 0}^{-(r\mb - \mb^2)} \rho_{\nu + r\mb - \frac{\mb^2 - \mb}{2}} \,\L^{\nu} \]
In a similar fashion as in remark \ref{rmk8}, it follows that 
\[ \left(\Lambda_{\frac{\mb^2 + \mb}{2}}^{(\mb)}(\L)\right)^{-1} = \L^{\frac{\mb^2 - \mb}{2}}\left( 1+ \sum_{\nu = -1}^{-\infty} \left( \sum_{\substack{\nu_1 , \cdots, \nu_l \in \mathbb{Z}_{<0} \\ \nu_1 + \cdots + \nu_l = \nu}} (-1)^l \prod_{i=1}^l \rho_{\nu_i + r\mb - \frac{\mb^2 - \mb}{2}} \right) \L^{\nu} \right)\]

It follows from equation \ref{FmLinv} that 
{\small 
\begin{align*}
\left(F_{\mb}(\L^{-2r}q)\right)^{-1} &=  \prod_{k=1}^\infty \left(\sum_{\alpha=0}^\infty (-1)^\alpha \binom{r}{\alpha} \L^{-rk\alpha}q^{k \alpha} \right) \times \left( \L^{-r}q \right)^{-\frac{r \mb - \mb^2}{2r}} \L^{-\frac{(r-1)\mb}{2}} \times 
\left( \Big(\Lambda_{\frac{\mb^2 + \mb}{2}}^{(\mb)}(\L)\Big)^{-1} \right.  \\
& \left. 
+ \sum_{d = -1}^{-\infty} \left( \sum_{\substack{d_1, \cdots, d_l \in \mathbb{Z}_{<0} \\ d_1 + \cdots + d_l = d}} (-1)^l \left(\Lambda_{\frac{\mb^2 + \mb}{2}}^{(\mb)}(\L)\right)^{-(l+1)} \prod_{i=1}^l \Lambda_{d_i + \frac{\mb^2 + \mb}{2}}^{(\mb)}(\L)
\right) \left(\L^{-r}q\right)^d
\right)
\end{align*}}

Each nonzero term appearing in the co-efficient of $\L^{-N}q^{\Delta}$ in $F_{\mb}(\L^{-2r}q)^{-1}$ arises from a pair of equations 
\begin{align*}
\Delta &= \left(\sum_{j=1}^\delta k^{(j)} \alpha^{(j)}\right) - \left(\frac{r\mb - \mb^2}{2r}\right) + d \\
-N &= \left(\sum_{j=1}^\delta -r k^{(j)} \alpha^{(j)}\right) + r \left( \frac{r\mb - \mb^2}{2r}\right) - \frac{(r-1)\mb}{2} + \\
& \;\qquad \qquad \left( \left( \frac{\mb^2 - \mb}{2} (l+1) + \sum_{i=1}^{l+1} \nu_i \right) + \sum_{i=1}^l \sum_{j=1}^{r-1} (r-j)b_j^{(i)} \right) - rd
\end{align*}
where the $\alpha$'s, the $\nu$'s, and $l$ are non-negative integers; the $k$'s are positive integers; and the $b_j^{(i)}$'s are integers satisfying 
\[ \mb^2 + \sum_{j=1}^{r-1} \left( b_j^{(i)} \right)^2 + \left( \mb + \sum_{j=1}^{r-1} b_j^{(i)} \right)^2 = 2d_i + \mb^2 + \mb , \qquad \text{ for } 1 \leq i \leq l \]
Subsequently, we will show (in Lemma \ref{clm12}) that $\left(\sum_{j-1}^{r-1} (r-j) b_j^{(i)}\right) + \frac{\mb^2 - \mb}{2} \leq (r-1)d_i$, for all $1 \leq i \leq l$. Consequently, we have 
\[ \left(\sum_{i=1}^l \sum_{j=1}^{r-1} (r-j) b_j^{(i)}\right) + \frac{\mb^2 -\mb}{2}l \leq (r-1) d \]
Therefore, we see that 
\[ N + (r-1)\frac{r\mb - \mb^2}{2r} - \frac{(r-1)\mb}{2} + \frac{\mb^2-\mb}{2} \geq \left(\sum_{j-1}^\delta k^{(j)} \alpha^{(j)}\right) - \frac{r \mb - \mb^2}{2r} + d  = \Delta \]
and hence, 
\[ N - \frac{(r-\mb)\mb}{2r} \geq \Delta \]
\end{proof}
Before we continue, we need to wrap up the proof of Lemma \ref{clm11} by proving the following:
\begin{lem}\label{clm12}
Let $d$ be a non-negative integer, and $\mb $ be a non-negative integer less than $r$. Suppose  $b_1, \cdots, b_{r-1}$ are integers satisfying 
\begin{equation}\label{eqn27}
 \mb^2 + \sum_{j=1}^{r-1} b_j^2 + \left( \mb + \sum_{j=1}^{r-1} b_j \right)^2 = 2d + \mb^2 +\mb 
\end{equation}
Then, we have $\sum_{j-1}^{r-1} (r-j) b_j \leq (r-1)d$.

Furthermore, if $r \geq 3$ and $2 \leq \mb \leq (r-1)$, then we have 
\[ \left(\sum_{j=1}^{r-1} (r-j)b_j\right) + \frac{\mb^2 - \mb}{2} \leq (r-1)d \]
\end{lem}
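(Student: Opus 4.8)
Our plan is to recast both inequalities of Lemma \ref{clm12} as a single bound for the lattice minimum of a positive-definite quadratic form, and then locate that minimum by the elementary ``no improving unit move'' criterion.

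First I would reformulate. Put $S=\sum_{j=1}^{r-1}b_j$ and
\[ C(b)=\tfrac12\Big(\mb^2+\sum_{j=1}^{r-1}b_j^2+(\mb+S)^2\Big),\qquad L(b)=\sum_{j=1}^{r-1}(r-j)b_j . \]
Equation \ref{eqn27} says precisely $C(b)=d+\tfrac{\mb^2+\mb}{2}$, i.e.\ $d=C(b)-\tfrac{\mb^2+\mb}{2}$, so the quantity $(r-1)d$ is $g(b)+L(b)-(r-1)\tfrac{\mb^2+\mb}{2}$ where $g(b):=(r-1)C(b)-L(b)$. A short computation shows that the first assertion $L\le(r-1)d$ is equivalent to $g(b)\ge(r-1)\tfrac{\mb^2+\mb}{2}$, and the ``furthermore'' to $g(b)\ge (r-1)\tfrac{\mb^2+\mb}{2}+\tfrac{\mb^2-\mb}{2}=\tfrac12\mb(r\mb+r-2)$. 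Since $(r-1)\tfrac{\mb^2+\mb}{2}\le\tfrac12\mb(r\mb+r-2)$ always, with gap $\tfrac{\mb^2-\mb}{2}\ge0$, both statements follow from the single inequality
\[ g(b)\ \ge\ \tfrac12\,\mb\,(r\mb+r-2)\qquad\text{for all }b\in\mathbb{Z}^{r-1}; \]
the restriction $r\ge3,\ \mb\ge2$ in the second statement is cosmetic, reflecting only that otherwise $\tfrac{\mb^2-\mb}{2}=0$ and it coincides with the first.

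Next, $g$ is a positive-definite (hence coercive) quadratic, so it attains a minimum on $\mathbb{Z}^{r-1}$, say at $b^\ast$; set $v=\mb+\sum_k b^\ast_k$. One checks $g(b\pm e_j)-g(b)=\pm\big((r-1)(b_j+v)-(r-j)\big)+(r-1)$, so minimality forces $\big|(r-1)(b^\ast_j+v)-(r-j)\big|\le r-1$ for every $j$. Because $b^\ast_j+v\in\mathbb{Z}$ and $1\le r-j\le r-1$, this pins $b^\ast_1+v\in\{0,1,2\}$ and $b^\ast_j+v\in\{0,1\}$ for $2\le j\le r-1$. Summing these over $j$ and using $\sum_j(b^\ast_j+v)=rv-\mb$ gives $\tfrac{\mb}{r}\le v\le\tfrac{\mb}{r}+1$, so $v\in\{0,1\}$; the case $v=0$ forces $\mb=0$ and $b^\ast=0$, where $g=0$ and there is nothing to prove. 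In the case $v=1$ we get $b^\ast_j\in\{-1,0\}$ for $j\ge2$, $b^\ast_1\in\{-1,0,1\}$, and $\sum_k b^\ast_k=1-\mb$. Writing $N_-$ for the number of $j\ge2$ with $b^\ast_j=-1$ (so $N_-=b^\ast_1+\mb-1$), and observing that for fixed $b^\ast_1,N_-$ the value $g(b^\ast)$ is smallest when the $-1$'s sit at the largest available indices (smallest weights $r-j$), the quantity $g(b^\ast)$ becomes an explicit expression in $b^\ast_1\in\{-1,0,1\}$; a direct check shows it is minimized at $b^\ast_1=0$ with $N_-=\mb-1$ (the $-1$'s at $j=r-\mb+1,\dots,r-1$), where it equals exactly $\tfrac12\mb(r\mb+r-2)$. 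Since this value is both a lower bound for $g$ at the constrained minimum and actually attained, $g_{\min}=\tfrac12\mb(r\mb+r-2)$, which proves the displayed inequality and hence the lemma.

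The genuine content is the reformulation of the first paragraph together with the observation that a lattice minimizer of a quadratic must survive every unit move; once $v$ is pinned down, only a bounded, essentially zero-dimensional computation remains. The fiddliest point is verifying that the minimum over the residual finite family (the $v=1$ configurations with entries in $\{-1,0,1\}$) is exactly $\tfrac12\mb(r\mb+r-2)$ and that the extremal configuration is feasible, which is where the bound $\mb\le r-1$ is used, together with the bookkeeping matching the single inequality back to the two forms of the statement and to the degenerate ranges $\mb\in\{0,1\}$.
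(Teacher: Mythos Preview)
Your proof is correct and takes a genuinely different route from the paper. The paper handles the two assertions separately. For the first it applies a rearrangement inequality (Remark~\ref{ordering}) to reduce to $b_1\ge\cdots\ge b_{r-1}$, splits the $b_j$ by sign into blocks of sizes $n_1,n_2,n_3$, bounds $\sum(r-j)b_j$ by $\tfrac{r-1}{2}$ times a weighted sum, and finishes using $b^2\ge 2b$ for $b\ge2$ together with Lemma~\ref{lowerbound2}. For the second it shifts the last $\mb-1$ coordinates by $1$ so that the claim becomes non-negativity of an explicit polynomial $f$ on $\mathbb{Z}^{r-1}$; it then locates the real critical point of $f$ inside $[-\tfrac12,\tfrac12]^{r-1}$ via the Hessian (this is where $r\ge3$ is used), and verifies $f\ge0$ on $\{-1,0,1\}^{r-1}$ by a four-parameter counting argument in variables $x,y,z,w$. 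You instead unify both assertions into the single lower bound $g(b)=(r-1)C(b)-L(b)\ge\tfrac12\,\mb(r\mb+r-2)$ and exploit that a lattice minimizer of a coercive quadratic must survive every unit move $b\mapsto b\pm e_j$; this pins the minimizer to a one-parameter family indexed by $b_1^\ast\in\{-1,0,1\}$, and a three-case comparison then identifies the minimum exactly. Your argument is shorter and more uniform, and in fact proves the ``furthermore'' inequality without the hypotheses $r\ge3$ and $\mb\ge2$ (so these are, as you say, cosmetic); the paper's approach, by contrast, is more hands-on and makes the combinatorial structure of the extremal configurations explicit.
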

\begin{proof}
Before we begin the proof of Lemma \ref{clm12}, note that 
\begin{rem}\label{ordering}
Let $r_1, \cdots, r_n$ be positive integers satisfying $r_1 > \cdots > r_n$, and let $b_1 , \cdots , b_n$ be integers satisfying $b_1 \geq \cdots \geq b_n$. Let $\sigma $ be any permutation of $\left\lbrace 1, \cdots, n \right\rbrace$. Then, we have 
\[ r_1 b_{\sigma(1)} + \cdots + r_n b_{\sigma(n)} \leq r_1 b_1 + \cdots + r_n b_n \]
\end{rem}
Thus, if $b'_1, \cdots , b'_{r-1}$ be a rearrangement of $b_1, \cdots, b_{r-1}$ satisfying $b'_1 \geq \cdots \geq b'_{r-1}$, then we see that
\[ \sum_{j=1}^{r-1} (r-j) b_j \,\leq\, \sum_{j=1}^{r-1} (r-j) b'_j \]
Moreover, let $n_1, \, n_2,\, n_3$ be non-negative integers such that
\begin{enumerate}[$\qquad\qquad \bullet$]
\item $b'_{j_1} \geq \cdots \geq b'_{j_{n_1}} \geq 2$,
\item $b'_{j_{n_1 + 1}} = \cdots = b'_{j_{n_1 + n_2}} = 1 $, 
\item $-1 \geq b'_{j_{n_1 + n_2 + 1}} \geq \cdots \geq b'_{j_{n_1 + n_2 +n_3}}$, and 
\item $b'_j = 0$ for all $j \neq j_l, \, 1 \leq l \leq n_1 + n_2 + n_3$.
\end{enumerate}
Therefore, we have 
\[ \sum_{j=1}^{r-1}(r-j) b'_j \,\leq\, \sum_{l=1}^{n_1} (r-j_{l}) b'_{j_{l}} + \sum_{l=n_1 +1}^{n_1 + n_2} (r - j_{l}) \leq \frac{(r-1)}{2} \left( 2 \left( \sum_{l=1}^{n_1} b'_{j_l} \right) + 2n_2 \right) \] 
We observe that to complete our proof it is enough to show that 
\[ \left(\sum_{l=1}^{n_1} 2b'_{j_l}\right) + 2n_2 \leq 2d \]
Since $\left(b'_{j_l}\right)^2 \geq 2 b'_{j_l}$ for $1 \leq l \leq n_1$ and $\left(b'_{j_l}\right)^2 = 1$ for $n_1 + 1 \leq l \leq n_1 + n_2$, it follows from equation \ref{eqn27} that it is enough to show that 
\[ n_2 + \mb \leq \sum_{l=n_1 + n_2 +1}^{n_1 + n_2 + n_3} \left( b'_{j_l}\right)^2 + \left( \left( \mb + n_2 + \sum_{l=1}^{n_1} b'_{j_l}\right) + \sum_{l=n_1 + n_2 +1}^{n_1 + n_2 + n_3} b'_{j_l} \right)^2 \]
If $n_2 + \mb \leq n_3$, then we are done because $\left(b'_{j_l}\right)^2 \geq 1$ for all $n_1 + n_2 +1 \leq l \leq n_1 + n_2 + n_3$. Otherwise, it follows from Lemma \ref{lowerbound2} that 
\[ \sum_{l=n_1 + n_2 +1}^{n_1 + n_2 + n_3} \left( b'_{j_l}\right)^2 + \left( \left( \mb + n_2 + \sum_{l=1}^{n_1} b'_{j_l}\right) + \sum_{l=n_1 + n_2 +1}^{n_1 + n_2 + n_3} b'_{j_l} \right)^2 \geq \frac{1}{n_3 +1} \left( \mb + n_2 + \sum_{l=1}^{n_1} b'_{j_l} \right)^2 \]
Since $b'_{j_l} \geq 2$ for $1 \leq l \leq n_1$ and $n_2 + \mb \geq n_3 +1$, we have 
\[ \frac{1}{n_3 +1} \left( \mb + n_2 + \sum_{l=1}^{n_1} b'_{j_l} \right)^2 \geq n_2 + \mb \]

Now we are going to specialize to the case when $r \geq 3$ and $2 \leq \mb \leq r-1$. Clearly, since $\mb \geq 2$, we see that $\frac{\mb^2 - \mb}{2} = 1 + \cdots + (\mb-1)$. We define 
\begin{align*}
b'_j = \begin{cases} b_j, &\text{ if }\; 1 \leq j \leq (r - \mb)\\ b_j +1 , &\text{ if }(r - \mb +1 )\leq j \leq (r-1) \end{cases}
\end{align*}
As a consequence, we see that 
\[ \left(\sum_{j=1}^{r-1} (r-j) b_j\right) + \frac{\mb^2 - \mb}{2} = \sum_{j=1}^{r-1}(r-j)b'_j \] 
Additionally, we can rewrite equation \ref{eqn27} in terms of $b'_j$'s as follows 
\[ \sum_{j=1}^{r-1}\left(b'_j\right)^2 + \left( \sum_{j=1}^{r-1}b'_j \right)^2 + 2 \left( \sum_{j=1}^{r - \mb} b'_j \right) = 2d \]
As a result, to prove our claim, it is enough to show that 
\[ \frac{(r-1)}{2}\left\lbrace \sum_{j=1}^{r-1}\left(b'_j\right)^2 + \left( \sum_{j=1}^{r-1}b'_j \right)^2 + 2 \left( \sum_{j=1}^{r - \mb} b'_j \right) \right\rbrace - \left( \sum_{j=1}^{r-1}(r-j)b'_j \right) \geq 0 \]
for integer values of $b'_j$, for all $1 \leq j \leq r-1$. Consider the smooth polynomial function 
\[ f(x_1, \cdots, x_{r-1}) = \frac{(r-1)}{2}\left\lbrace \sum_{j=1}^{r-1}x_j^2 + \left( \sum_{j=1}^{r-1}x_j \right)^2 + 2\left(\sum_{j=1}^{r-\mb}x_j \right)\right\rbrace - \left( \sum_{j=1}^{r-1} (r-j)x_j \right) \]
We have 
\begin{align*}
\frac{\partial f}{\partial x_k} = \begin{cases} \frac{(r-1)}{2}\left\lbrace 2x_k + 2\left( \sum_{j=1}^{r-1} x_j\right) + 2 \right\rbrace - (r-k), &\text{ if }\; 1 \leq k \leq (r-m) \\
\frac{(r-1)}{2}\left\lbrace 2x_k + 2 \left(\sum_{j=1}^{r-1} x_j\right)\right\rbrace - (r-k), &\text{ if }\; (r- \mb +1) \leq k \leq (r-1) \end{cases}
\end{align*}
and, the second partial derivatives are 
\begin{align*}
\frac{\partial^2 f}{\partial x_l \partial x_k} = \begin{cases}  2\frac{(r-1)}{2}, &\text{ if }\; l \neq k\\
4\frac{(r-1)}{2}, &\text{ if }\; l=k \end{cases}
\end{align*}
Since $r \geq 3$ and the Hessian matrix for $f$ is $\frac{(r-1)}{2}$ times the Hessian matrix in Lemma \ref{lowerbound2}, we conclude that our Hessian matrix is positive definite. Thus, $f$ has a global minimum at the critical point 
\begin{align*}
x_k = \begin{cases} -\frac{\mb}{r} -\frac{1}{2} + \frac{(r-k)}{(r-1)}, &\text{ if }\; 1 \leq k\leq (r-\mb)\\
-\frac{\mb}{r} + \frac{1}{2} + \frac{(r-k)}{(r-1)}, &\text{ if }\; (r-\mb+1) \leq k \leq (r-1)\end{cases}
\end{align*}
It follows from the bounds on $k$ that in either case, we have $-\frac{1}{2} \leq x_k \leq \frac{1}{2}$. Hence, to show that $f$ is non-negative for all integer values of $x_j$, for all $1 \leq j \leq (r-1)$, it is enough to show that $f$ is non-negative for every element of the set $\left\lbrace -1,0,1 \right\rbrace^{r-1}$. Let $(x_1, \cdots, x_{r-1})$ be an element of the set $\left\lbrace -1,0,1 \right\rbrace^{r-1}$. Furthermore, assume that for $1 \leq j \leq (r-\mb)$, $x$ of the $x_j$'s are $(+1)$ and $y$ of the $x_j$'s are $(-1)$. On a similar note, assume that for $(r-\mb +1) \leq j\leq (r-1)$, $z$ of the $x_j$'s are $(+1)$ and $w$ of the $x_j$'s are $(-1)$. It follows from Remark \ref{ordering} that 
\begin{align*} 
\sum_{j=1}^{r-1}(r-j)x_j &\leq (r-1) + \cdots + (r - x) - \left\lbrace \mb + (\mb+1) + \cdots + (\mb+y-1)\right\rbrace \\
&\qquad \qquad + (\mb-1) + \cdots + (\mb - z) - \left\lbrace 1 + \cdots + w \right\rbrace \\
&= rx - \mb y + \mb z - \frac{x^2 +x}{2} - \frac{y^2 -y}{2} - \frac{z^2 +z}{2} - \frac{w^2 +w}{2}
\end{align*}
Therefore, we have 
\begin{align}\label{eqn29}
\begin{aligned}
f(x_1, \cdots, x_{r-1}) &\geq \frac{(r-1)}{2} \left\lbrace (x-y+z-w)^2 + 3x -y + z + w \right\rbrace\\
& - \left\lbrace rx - \mb y + \mb z - \frac{x^2 +x}{2} - \frac{y^2 -y}{2} - \frac{z^2 +z}{2} - \frac{w^2 +w}{2} \right\rbrace
\end{aligned}
\end{align}
For ease of notation, let's call the right hand side of inequality in equation \ref{eqn29} as $g(x,y,z,w)$. Upon further scrutinizing, we deduce that 
\[ 2g(x,y,z,w) = (r-1)(x-y+z-w)^2 + (x^2 + y^2 + z^2 + w^2) + (r-2)x + (2\mb -r)y + (r-2\mb)z + rw \]
If $r = 2\mb$, then $2g(x,y,z,w) \geq 0$ because $x$ and $w$ are non-negative integers. If $r>2\mb$, then we see that 
\[ 2g(x,y,z,w) \geq (r - 2\mb)\left\lbrace (x-y+z-w)^2 + (x-y+z-w) \right\rbrace \geq 0 \]
Similarly, if $r<2\mb$, then using the fact that $(r-1)> (2\mb -r)$, we get 
\[ 2g(x,y,z,w) \geq (2\mb -r) \left\lbrace (-x+y-z+w)^2 + (-x+y-z+w)\right\rbrace \geq 0 \]
In conclusion, the function $f$ is non-negative for all integer values of $x_j$, for all $1 \leq j \leq r-1$.
\end{proof} 

We are finally ready to analyze $(1-q)G_{r,c}(q)$.
\begin{thm}\label{clm14}
If $\Delta > N  + \frac{(2 - 2r)\mb^2 - r\mb}{2r} + C_0$, where $C_0$ is the same constant as in Proposition \ref{boundMF}, then the coefficient of $\L^{-N}q^{\Delta}$ in $(1-q)G_{r,c}(q)$ is zero.
\end{thm}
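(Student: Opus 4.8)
The plan is to combine the blow-up relation \ref{blowupeqn} with the two vanishing estimates already established: Lemma \ref{clm11}, which controls $\big(F_{\mb}(\L^{-2r}q)\big)^{-1}$, and Proposition \ref{boundMF}, which controls $(1-q)\tilde{G}_{r,\tilde{c}}(q)$. First I would record a normalization: since $F_m$ depends only on $\mb$ (equation \ref{Fm}) and since twisting $c$ by a line bundle on $\P$ leaves $G_{r,c}(q)$ unchanged, one may arrange the blow-up data so that the multiplicity $m$ equals $\mb \in \{0,1,\dots,r-1\}$, in which case the prefactor in \ref{blowupeqn} is exactly $(\L^{-2r}q)^{\mb^2/2r} = \L^{-\mb^2}\,q^{\mb^2/2r}$. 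Multiplying \ref{blowupeqn} through by $(1-q)$ then gives
\[ (1-q)G_{r,c}(q) \;=\; \L^{-\mb^2}\,q^{\frac{\mb^2}{2r}}\,\big(F_{\mb}(\L^{-2r}q)\big)^{-1}\,(1-q)\tilde{G}_{r,c-\mb E}(q). \]

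The second step is to read off the coefficient of $\L^{-N}q^{\Delta}$ on the right-hand side. Each of the three factors has been written in Sections \ref{section4} and \ref{section5} as a convergent series in $\L^{-1}$ and $q$ inside $A^-$, so for a fixed pair $(N,\Delta)$ only finitely many terms contribute and the coefficient decomposes as a finite sum: every nonzero contribution to the coefficient of $\L^{-N}q^{\Delta}$ in $(1-q)G_{r,c}(q)$ arises from a monomial $\L^{-N_1}q^{\Delta_1}$ of $\big(F_{\mb}(\L^{-2r}q)\big)^{-1}$ and a monomial $\L^{-N_2}q^{\Delta_2}$ of $(1-q)\tilde{G}_{r,c-\mb E}(q)$, combined with the fixed prefactor, subject to
\[ \Delta \;=\; \frac{\mb^2}{2r} + \Delta_1 + \Delta_2, \qquad N \;=\; \mb^2 + N_1 + N_2. \]

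Then I would apply the two input bounds. By Lemma \ref{clm11} the first monomial can be nonzero only when $\Delta_1 \leq N_1 - \frac{(r-\mb)\mb}{2r}$, and by Proposition \ref{boundMF} the second can be nonzero only when $\Delta_2 \leq N_2 + C_0$. Substituting these inequalities and using $N_1 + N_2 = N - \mb^2$ yields
\[ \Delta \;\leq\; \frac{\mb^2}{2r} + N_1 - \frac{(r-\mb)\mb}{2r} + N_2 + C_0 \;=\; N - \mb^2 + \frac{\mb^2 - (r-\mb)\mb}{2r} + C_0 \;=\; N + \frac{(2-2r)\mb^2 - r\mb}{2r} + C_0, \]
so that when $\Delta > N + \frac{(2-2r)\mb^2 - r\mb}{2r} + C_0$ no admissible splitting exists and the coefficient of $\L^{-N}q^{\Delta}$ in $(1-q)G_{r,c}(q)$ must vanish, which is the assertion.

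The arithmetic here is routine; I expect the real work to be in the bookkeeping. Two points require care: justifying the preliminary reduction to $m = \mb$ (so that the prefactor is $\L^{-\mb^2}q^{\mb^2/2r}$ rather than a larger shift), and verifying that the coefficient of $\L^{-N}q^{\Delta}$ in the product genuinely splits as a finite sum over pairs $(\L^{-N_1}q^{\Delta_1}, \L^{-N_2}q^{\Delta_2})$ — that is, that multiplying the three series term by term is legitimate in $A^-$ and that the fractional $q$-exponent $\mb^2/2r$ introduces no pathology. Both follow from the explicit convergent expansions already derived, namely equation \ref{FmLinv} for $\big(F_{\mb}(\L^{-2r}q)\big)^{-1}$ and the expansion of $(1-q)\tilde{G}_{r,c-\mb E}(q)$ underlying Proposition \ref{boundMF}, so these are matters of care rather than new ideas.
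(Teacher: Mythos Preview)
Your proposal is correct and follows essentially the same approach as the paper's proof: multiply the blow-up relation \ref{blowupeqn} by $(1-q)$, decompose the coefficient of $\L^{-N}q^{\Delta}$ into contributions $(\Delta_1,N_1)$ from $\big(F_{\mb}(\L^{-2r}q)\big)^{-1}$ and $(\Delta_2,N_2)$ from $(1-q)\tilde{G}_{r,c-\mb E}(q)$, then apply Lemma \ref{clm11} and Proposition \ref{boundMF} and add. Your discussion of the reduction to $m=\mb$ and the legitimacy of the coefficient splitting is more explicit than the paper's, but the argument is otherwise identical.
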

\begin{proof}
Recall that if follows from the blow-up equation (equation \ref{blowupeqn}) that 
\[ (1-q)G_{r,c}(q) = \left(\L^{-2r}q \right)^{\frac{m^2}{2r}} \times  \left(F_m(\L^{-2r}q)\right)^{-1} \times (1-q)\tilde{G}_{r,c-mE}(q) \]
Each nonzero term appearing in the co-efficient of $\L^{-N}q^{\Delta}$ arises from a pair of equations 
\begin{align*}
\Delta &= \frac{\mb^2}{2r} + \Delta_1 + \Delta_2 \\
-N &= -\mb^2 + \left(-N_1 \right) + \left( -N_2 \right)
\end{align*}
where $\left( \Delta_1, -N_1 \right)$ accounts for the contribution of terms from the co-efficient of $\L^{-N_1}q^{\Delta_1}$ in $\left(F_{\mb}(\L^{-2r}q)\right)^{-1}$, and $\left( \Delta_2, -N_2 \right)$ accounts for the contribution of terms from the co-efficient of $\L^{-N_2}q^{\Delta_2}$ in $(1-q)\tilde{G}_{r,c - m E}(q)$.

It follows from Lemma \ref{clm11} and Proposition \ref{boundMF} that
\begin{align*}
\Delta_1 \leq N_1 - \frac{(r-\mb)\mb}{2r}\,,\qquad \text{ and } \;\,
\Delta_2 \leq N_2 + C_0
\end{align*}
These inequalities yield
\[ \Delta \leq N +   \frac{(2 - 2r)\mb^2 - r\mb}{2r} + C_0 \]
In conclusion, for $\Delta > N + \frac{(2 - 2r)\mb^2 - r\mb}{2r} + C_0 $, the co-efficient of $\L^{-N}q^\Delta$ in $(1-q) G_{r,c}(q)$ is zero.
\end{proof}

\section{\textsc{Bounds for stabilization of Betti numbers}}\label{section6}
In this section, our goal is to determine lower bounds such that the Betti numbers of the moduli space stabilize. More precisely, we look at $\P$ equipped with the ample divisor $H = c_1( \mathcal{O}_{\mathbb{P}^2}(1))$. We assume that $r$ and $a$ are coprime and consider the moduli space $M_{\mathbb{P}^2,H}(r,aH,c_2)$. Since $r$ and $a$ are coprime, all $\mu_H$-semistable sheaves are $\mu_H$-stable. Using Proposition \ref{proposition10} in conjunction with Theorem \ref{clm14}, we derive the lower bounds such that the Betti numbers of $M_{\P,H}(r,aH,c_2)$ stabilize. Lastly, we investigate some examples and show that we can improve this bound further.
\begin{thm}\label{theorem26}
Let $r$ be at least two. Assume that $r$ and $a$ be coprime. There is a constant $C$ depending only on $r$ and $a$ such that if $c_2 \geq N + C$, the $2N$th Betti number of the moduli space $M_{\P,H}(r,aH,c_2)$ stabilize. Moreover, we can take $C = \left\lfloor \frac{r-1}{2r}a^2 + \frac{1}{2}(r^2 + 1) \right\rfloor$.
\end{thm}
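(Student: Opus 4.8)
The plan is to transfer the vanishing estimate of Theorem~\ref{clm14} for $(1-q)G_{r,aH}(q)$ to the generating function built from the moduli \emph{spaces} $M_{\P,H}(r,aH,c_2)$, and then invoke the formal stabilization criterion of Remark~\ref{remark8}. First, since $r$ and $a$ are coprime, every $\mu_H$-semistable sheaf with Chern character $\gamma=(r,aH,\Delta)$ is $\mu_H$-stable; as $K_\P\cdot H=-3<0$ this makes $M_{\P,H}(\gamma)$ a smooth projective variety of dimension $ext^1(\gamma,\gamma)=1-\chi(\gamma,\gamma)=2r^2\Delta-r^2+1$, and Proposition~\ref{proposition10} gives $[M_{\P,H}(\gamma)]=(\L-1)[\mathcal{M}_{\P,H}(\gamma)]$ in $A$.

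Next I would rewrite $G_{r,aH}(q)$ in the variable $c_2$. On $\P$ one has $H^2=1$, hence $c_1^2=a^2$ and $c_2=r\Delta+\frac{r-1}{2r}a^2$, and moreover $r^2(1-2\Delta)=1-ext^1(\gamma,\gamma)$. Substituting $[\mathcal{M}_{\P,H}(r,aH,\Delta)]=(\L-1)^{-1}[M_{\P,H}(r,aH,\Delta)]$ into the definition of $G_{r,aH}(q)$ and re-indexing the sum by $c_2$ (extending by zero to all $c_2\ge 0$) gives the identity
\[ q^{\frac{r-1}{2r}a^2}\,(1-\L^{-1})\,(1-q)\,G_{r,aH}(q)\;=\;(1-q)\sum_{c_2\ge 0}[M_{\P,H}(r,aH,c_2)]\,\L^{-ext^1(\gamma,\gamma)}\,q^{c_2}. \]

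Now I would apply Theorem~\ref{clm14} to $c=aH$. Choosing the blown-up point $p$ off a representative of the class $aH$, the multiplicity $m$ is $0$ and so $\mb=0$; the $\mb$-dependent term in Theorem~\ref{clm14} then vanishes and the coefficient of $\L^{-N'}q^{\Delta}$ in $(1-q)G_{r,aH}(q)$ is zero whenever $\Delta>N'+\frac{1}{2}(r^2+1)$. Feeding this into the identity above, a nonzero contribution to the coefficient of $\L^{-N}q^{c_2}$ on the right comes from $c_2=\frac{r-1}{2r}a^2+\Delta$ with $-N=\varepsilon-N'$ and $\varepsilon\in\{-1,0\}$ (accounting for the factor $1-\L^{-1}$), so $N'\le N$ and $\Delta\le N'+\frac{1}{2}(r^2+1)\le N+\frac{1}{2}(r^2+1)$, i.e.\ $c_2\le N+\frac{r-1}{2r}a^2+\frac{1}{2}(r^2+1)$. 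As $c_2$ and $N$ are integers, this forces the coefficient of $\L^{-N}q^{c_2}$ to vanish once $c_2\ge N+C+1$, where $C=\big\lfloor\frac{r-1}{2r}a^2+\frac{1}{2}(r^2+1)\big\rfloor$.

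Finally I would conclude with Remark~\ref{remark8}, applied to $X_{c_2}=M_{\P,H}(r,aH,c_2)$ (smooth projective by the first step), $d_{c_2}=ext^1(\gamma,\gamma)$, and $\Delta_0(N)=N+C+1$: the $2N$th Betti number of $M_{\P,H}(r,aH,c_2)$ stabilizes for $c_2\ge\Delta_0(N)-1=N+C$, which is the statement. The argument is short given Theorem~\ref{clm14} and Proposition~\ref{proposition10}; the steps that need care are the exponent bookkeeping in the identity above --- in particular the $q^{\frac{r-1}{2r}a^2}$ shift, which is the source of the $\frac{r-1}{2r}a^2$ summand of $C$ --- and the justification that one may work with $\mb=0$ in Theorem~\ref{clm14}; the passage from a strict inequality with a non-integer bound to the floor $C$ is then elementary.
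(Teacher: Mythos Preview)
Your proposal is correct and follows essentially the same route as the paper's proof: reduce to smoothness via coprimality, use Proposition~\ref{proposition10} to pass from space to stack, establish the identity $(1-q)\sum_{c_2\ge 0}[M_{\P,H}(\gamma)]\L^{-ext^1(\gamma,\gamma)}q^{c_2}=q^{\frac{r-1}{2r}a^2}(1-\L^{-1})(1-q)G_{r,aH}(q)$, apply Theorem~\ref{clm14} with $\mb=0$ together with the value $C_0=\tfrac{1}{2}(r^2+1)$ from Proposition~\ref{boundMF}, and close with Remark~\ref{remark8}. The only cosmetic difference is your geometric gloss on why one may take $m=0$; in the paper this is simply a free choice of the parameter $m$ in the blow-up formula, but either reading leads to the same bound.
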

\begin{proof}
Let $\gamma$ denote the Chern class $(r,aH, c_2)$. By our assumption, $r$ and $a$ are coprime, a posteriori, all $\mu_H$-semistable sheaves are $\mu_H$-stable. In this case, we know that $M_{\P,H}(\gamma)$ is a smooth projective variety of dimension $ext^1(\gamma,\gamma)$. We conclude using Remark \ref{remark8} that to show that the $2N$th Betti number stabilize for $c_2 \geq N + C$, it is enough to show that the coefficient of $\L^{-N}q^d$ in the generating function 
\[ (1-q) \sum_{c_2 \geq 0}[M_{\P,H}(\gamma)] \L^{-ext^1(\gamma,\gamma)}q^{c_2} \]
is zero for $d > N + C$. 

We note that $\chi(\gamma,\gamma) = 1 - ext^1(\gamma,\gamma)$ and $ c_2 = r\Delta + \frac{r-1}{2r} c_1^2$. Proposition \ref{proposition10} yields the following equality in $A$
\[ [M_{\P,H}(r,aH,c_2)] = (\L -1) [\MP (r,aH,c_2)] \]
Thus, we have the following equality of generating functions 
\[ (1-q) \sum_{c_2 \geq 0 } [M_{\P,H}(\gamma)]\L^{-ext^1(\gamma,\gamma)} q^{c_2} = q^{\frac{r-1}{2r}a^2} (1 - \L^{-1})(1-q)G_{r,aH}(q) \]
Each term contributing to the coefficient of $\L^{-N}q^d$ in $q^{\frac{r-1}{2r}a^2} (1 - \L^{-1})(1-q)G_{r,aH}(q) $ arises from a pair of equations 
\begin{align*}
d &= \frac{r-1}{2r}a^2 + \Delta' \\
-N &= \varepsilon - N'
\end{align*}
where $\varepsilon \in \{-1,0 \}$ accounts for the contribution to the coefficient of $\L^{-N}q^d$ coming from $(1-\L^{-1})$, and $(\Delta',N')$ accounts for the contribution coming from the coefficient of $\L^{-N'}q^{\Delta'}$ in $(1-q)G_{r,aH}(q)$. It follows from Theorem \ref{clm14} that for the coefficient of $\L^{-N'}q^{\Delta'}$ to be nonzero, we must have $\Delta' \leq N' + C_0$ (using $m = 0$). Moreover, it follows from Proposition \ref{boundMF} that we can take $C_0 = \frac{1}{2}(r^2 + 1)$. Consequently, for the coefficient of $\L^{-N}q^d$ in $q^{\frac{r-1}{2r}a^2} (1 - \L^{-1})(1-q)G_{r,aH}(q)$ to be nonzero, we must have 
\[ d \leq N +  \left\lfloor \frac{r-1}{2r}a^2 + C_0 \right\rfloor \]
\end{proof}

For the remainder of this section, we look at some examples. Yoshioka \cite{yos94}[Page 194] has computed the Betti numbers $b_{2N}(M_{\P,H} (2,-H,c_2))$, where $M_{\P,H}(2, -H, c_2)$ is the moduli space of $\mu_H$-stable sheaves with Chern classes $(2,-H, c_2)$, which we will denote by $\gamma$. We observe from the table in \cite{yos94}[Page 194] that the Betti numbers $b_{2N}(M_{\P,H} (\gamma))$ stabilize when $c_2 \geq N+1$. Since $r=2$ and $a = -1$, we get from Theorem \ref{theorem26} that the Betti numbers stabilize when $c_2 \geq N + 2$. Therefore, we need to improve our lower bound.
\begin{propn}\label{clm15}
If $c_2 \geq  N+1$, the $2N$th Betti number of the moduli space $M_{\P,H}(2,-H, c_2)$ stabilize.
\end{propn}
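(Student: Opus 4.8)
The plan is to re-run the argument of Theorem~\ref{theorem26} for $r=2$ and $c=-H$, but with a value of the constant $C_0$ of Proposition~\ref{boundMF} that is sharper than the universal $\tfrac12(r^2+1)$ and is adapted to this rank and first Chern class. With the blow-up $\F\to\P$ and the choice $m=0$ one has $\tilde c=\pi^*(-H)=-(E+F)$; since tensoring by a line bundle (which shifts the coefficients of $c_1$ by even amounts) leaves every term of $\tilde G_{2,\tilde c}(q)$ unchanged, we may assume $\tilde c$ corresponds to $a=b=1$ in the notation of Lemma~\ref{lowerbound} --- and it is exactly this parity ($a,b$ odd) that is the new input. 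It then suffices to show that for this $\tilde c$ one may take $C_0=\tfrac34$: Theorem~\ref{clm14} with $m=0$ (so $\bar m=0$, the correction term $\frac{(2-2r)\bar m^2-r\bar m}{2r}$ vanishes, and Lemma~\ref{clm11} gives $\Delta_1\le N_1$ in the inversion of $F_0(\L^{-2r}q)$) would give that the coefficient of $\L^{-N}q^{\Delta}$ in $(1-q)G_{2,-H}(q)$ is zero for $\Delta>N+\tfrac34$; and the final computation of Theorem~\ref{theorem26} would then show the coefficient of $\L^{-N}q^{d}$ in $q^{\frac{r-1}{2r}a^2}(1-\L^{-1})(1-q)G_{2,-H}(q)$ is zero for $d>N+\lfloor\tfrac14+\tfrac34\rfloor=N+1$, which by Remark~\ref{remark8} is precisely the assertion that $b_{2N}(M_{\P,H}(2,-H,c_2))$ stabilizes for $c_2\ge N+1$.

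So the actual work is to redo the estimate of Proposition~\ref{boundMF} for $r=2$ and this $\tilde c$. Since $r=2$, the only decomposition $\sum_{i=1}^l(r_i,c_i)=(2,\tilde c)$ with $l\ge 2$ is $l=2$, $r_1=r_2=1$; the case $l=1$ contributes only $0$, and the ``special'' case $\mu_F(\gamma_2)-\mu_F(\gamma_1)=-1$ already contributes only $\frac{r_1r_2}{2r}+\left(\lceil\frac{br_2}{r}\rceil-\frac{br_2}{r}\right)=\tfrac14+\tfrac12=\tfrac34$, exactly as in Proposition~\ref{boundMF}. For the remaining decompositions, set $\delta=a_1-a_2$ and $\epsilon=b_1-b_2$; both are odd (because $a=a_1+a_2$ and $b=b_1+b_2$ are odd), and non-special means $\delta\ne 1$. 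A direct computation of the Euler characteristics on $\F$, using $E^2=-1$, $F^2=0$, $E\cdot F=1$ and $K_{\F}=-2E-3F$, yields that the left-hand side of equation~\ref{lbterm} specialises to
\[ 2(r-1)\left(r\Delta-\sum_i r_i\Delta_i\right)+\sum_{i<j}r_ir_j(\mu_j-\mu_i)\cdot K_{\F}=\tfrac12\delta^2+\delta+\epsilon(2-\delta). \]
The sign conditions of~\ref{defnSmu} needed for $S^{\mu}(\gamma_1,\gamma_2;F,E+F)\ne 0$ force either Case A (whence $\delta\ge 3$ and $\epsilon\le-1$) or Case B (whence $\delta\le-1$ and $\epsilon\ge 1$), and on either of these two (parity-constrained) ranges one checks that $\tfrac12\delta^2+\delta+\epsilon(2-\delta)$ attains its minimum at $\delta=-1$, $\epsilon=1$, with value $\tfrac52$. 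Hence, in place of the weak bound from~\ref{kapval}, each such decomposition gives $\Delta'\le N'+1+\tfrac12\left(r^2-\sum_i r_i^2\right)-\tfrac12\cdot\tfrac52=N'+\tfrac34$, and taking the supremum over all three types of decomposition shows $C_0=\tfrac34$ is admissible.

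The step I expect to be the main obstacle is exactly this last case analysis. The generic lower bound $\kappa$ produced in Lemma~\ref{lowerbound} is far too crude here --- for these decompositions it can be as small as $0$, which would only yield $C_0\le 2$ and hence the weaker conclusion $c_2\ge N+2$ --- so one must genuinely compute the \emph{exact} minimum of $\tfrac12\delta^2+\delta+\epsilon(2-\delta)$ over the (parity- and sign-)constrained set of admissible pairs $(\delta,\epsilon)$ and verify that it equals $\tfrac52$ rather than $0$; the parity $a\equiv b\equiv 1\pmod 2$ (which rules out $\delta,\epsilon$ even, e.g.\ $\delta=-2$) is what makes this minimum large. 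Everything else is a direct specialisation of the estimates already carried out in Sections~\ref{section4} and~\ref{section5}.
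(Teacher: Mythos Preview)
Your proposal is correct and follows essentially the same approach as the paper: both redo the estimate of Proposition~\ref{boundMF} for $r=2$ and this $\tilde c$, compute the expression~\eqref{lbterm} explicitly, and use the sign/parity constraints from $S^{\mu}$ to show $\kappa=\tfrac52$ and hence $C_0=\tfrac34$. The only differences are cosmetic --- the paper works with the coordinates $(a_1,b_1)$ under the normalization $a_1+a_2=b_1+b_2=-1$, whereas you use the differences $(\delta,\epsilon)=(a_1-a_2,b_1-b_2)$ with $a=b=1$ --- and both lead to the same minimum and the same bound $d\le N+1$.
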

\begin{proof}
Following the proof of Theorem \ref{theorem26}, it is enough to show that when $d > N+1$ ,the coefficient of $\L^{-N}q^d$ in $q^{\frac{1}{4}}(1 - \L^{-1})(1-q)G_{2,-H}(q)$ is zero.

Each term contributing to the coefficient of $\L^{-N}q^d$ in $q^{\frac{1}{4}}(1 - \L^{-1})(1-q)G_{2,-H}(q)$ arises from a pair of equations 
\begin{align*}
d &= \frac{1}{4} + \Delta' \\
-N &= \varepsilon - N'
\end{align*}
where $\varepsilon \in \left\lbrace -1,0 \right\rbrace$ accounts for the contribution to the coefficient coming from $(1 - \L^{-1})$, and $(\Delta', N'')$ accounts for the contribution coming from terms in coefficient of $\L^{-N'}q^{\Delta'}$ in $(1-q)G_{2,-H}(q)$. 

It follows from Theorem \ref{clm14} that for the co-efficient of $\L^{-N'}q^{\Delta'}$ to be nonzero, we must have $\Delta' \leq N' + C_0$. Consequently, we must have 
\begin{equation}
d - \frac{1}{4} = \Delta' \leq N' + C_0 = N + \varepsilon + C_0 \leq N + C_0
\end{equation}
As a result, for $d > N + \left\lfloor \frac{1}{4} + C_0 \right\rfloor$, the coefficient of $\L^{-N}q^d$ in $q^{\frac{1}{4}}(1 - \L^{-1}) (1-q)G_{2,-H}(q)$ must be zero. Therefore, to complete the proof of our Claim, we need to figure out the value of $C_0$.

It follows from the proof of Proposition \ref{boundMF} that to compute $C_0$, we need to compute 
\[  \frac{1}{2}\left(r^2 - \sum_{i=1}^l r_i^2 \right) - \frac{1}{2}\kappa \]
where $l=2$, $r=2$, $r_1 = r_2 = 1$, and $\kappa$ is a lower bound for
\[ 2\left( 2\Delta - \Delta_1 - \Delta_2 \right) + \left(c_2 - c_1 \right)\cdot K_{\F} \]
except for the case $l=2$ and $(c_2 - c_1)\cdot F = -1$.

Let $c_1 = a_1 E + b_1 F$ and $c_2 = a_2 E + b_2 F$. Since $c_1 + c_2 = -E-F$, we have $a_1 + a_2 = -1$ and $b_1 + b_2 = -1$. Moreover, we must have $a_2 - a_1 \neq -1$. Using Yoshioka's relation (equation \ref{eqnyos}) yields 
\[ \left( 2 \Delta - \Delta_1 - \Delta_2 \right) = - \frac{1}{4}\left( c_1 - c_2 \right)^2 
= \frac{1}{4}\left(2a_1 +1\right)^2 - \frac{1}{2}\left( 2a_1 +1\right) \left(2b_1 +1 \right) \]
Since $K_{\F} = -2E -3F$, we see that 
\[ \left( c_2 - c_1 \right) \cdot K_{\F} = \left( 2a_1 +1 \right) + 2 \left( 2b_1 +1 \right) \]
Therefore, we have 
\[ 2(2 \Delta - \Delta_1 - \Delta_2 ) + (c_2 - c_1)\cdot K_{\F} = 2 a_1^2 + 2 a_1 + 2b_1 - 4a_1 b_1 + \frac{5}{2} \]
Clearly $a_1^2 + a_1 \geq 0$ for all integer values of $a_1$. Thus, we need to find a lower  bound for $2b_1(1-2a_1)$.

Recall that as per the definition of $S^{\mu}(\{1,c_1\},\{1,c_2\},F,E+F)$ (see equation \ref{defnSmu}, \ref{eqnSmu}) we have two cases 
\begin{enumerate}[$\qquad \,$ A)]
\item $a_1 > -\frac{1}{2}$ and $b_1 \leq - \frac{1}{2}$
\item $a_1 \leq - \frac{1}{2}$ and $b_1 > - \frac{1}{2}$
\end{enumerate}
Since $a_1$ and $b_1$ are integers, in Case A, we see that $a_1 \geq 0$ and $-b_1 \geq 1$. When $a_1 =0$, we must have $a_2 = -1$, whence $a_2 - a_1 = -1$ which is not possible by our assumption. Hence, we must have $a_1 \geq 1$, which yields
\[ 2b_1(1 - 2a_1) = (2a_1 -1) (-2b_1) \geq \left( 2\left( 1\right) -1 \right) \left(2(1)\right) = 2 \]
Similarly, in Case B, we see that $-a_1 \geq 1$ and $b_1 \geq 0$, thereby yielding
\[ 2b_1(1 - 2a_1) \geq \left(2(0)\right) \left( 1 + 2(1)\right) = 0  \]
In either case we see that $2b_1(1 - 2a_1) \geq 0$, and hence we can take $\kappa = \frac{5}{2}$.

Clearly, in our case $r=2$ and $r_1 = r_2 = 1$, whence $\frac{1}{2}\left( r^2 - r_1^2 - r_2^2 \right) = 1$. Following the proof of Proposition \ref{boundMF}, we see that 
\[ C_0 = \max \left\lbrace 0, 1 +  1 - \frac{1}{2}\kappa, 1 - \frac{3}{4} +\left(\left\lceil \frac{-1}{2}\right\rceil - \frac{-1}{2}\right) \right\rbrace = 2 - \frac{5}{4} \]
In summary, for the coefficient of $\L^{-N}q^d$ to be nonzero, we must have 
\[d \leq N + \frac{1}{4} + 2 - \frac{5}{4} = N+1\]
In conclusion, when $d > N+1$, the coefficient of $\L^{-N}q^d$ in $q^{\frac{1}{4}}(1 - \L^{-1})(1-q)G_{2,-H}(q)$ is zero.
\end{proof}

Manschot \cite{man11}[Table 1], \cite{man}[Table 1] computed the Betti numbers of the moduli space $M_{\P,H}(3,-H,c_2)$ and the virtual Betti numbers of the moduli space $M_{\P,H}(4,2H, c_2)$. We observe from the tables in these papers that the Betti numbers of $M_{\P,H}(3,-H,c_2)$ stabilize when $c_2 \geq N + 2$ and the virtual Betti numbers of $M_{\P,H}(4,2H,c_2)$ stabilize when $c_2 \geq N+ 3$. In the first case, we have $r=3$ and $a = -1$, we get from Theorem \ref{theorem26} that the Betti numbers stabilize when $c_2 \geq N + 5$. 

As our second example, we scrutinize the Betti numbers of the moduli space $M_{\P,H}(4,H,c_2)$. In this case, Theorem \ref{theorem26} yields the stabilization of the Betti numbers when $c_2 \geq N + 8$. We improve this bound in the following Proposition. 
\begin{propn}\label{clm16}
If $c_2 \geq N + 5$, the $2N$-th Betti number of the moduli space $M_{\P,H}(4,H,c_2)$ stabilize.
\end{propn}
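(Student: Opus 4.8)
The plan is to run the argument of Theorem~\ref{theorem26} and Proposition~\ref{clm15} and then carry out explicitly, for $r=4$ and $c=H$, the finite optimization hidden inside Proposition~\ref{boundMF}. Since $r=4$ and $a=1$ are coprime, $M_{\P,H}(4,H,c_2)$ is a smooth projective variety, and by Proposition~\ref{proposition10} together with $c_2=r\Delta+\frac{r-1}{2r}c_1^2=4\Delta+\tfrac38$, it suffices to show that the coefficient of $\L^{-N}q^{d}$ in $q^{3/8}(1-\L^{-1})(1-q)G_{4,H}(q)$ vanishes for $d>N+5$. Exactly as in the proof of Theorem~\ref{theorem26}, with blow-up parameter $m=0$ (so $\mb=0$ and the correction term $\frac{(2-2r)\mb^2-r\mb}{2r}$ of Theorem~\ref{clm14} vanishes), this reduces to the bound $\Delta'\le N'+C_0$, where $C_0$ is the constant of Proposition~\ref{boundMF} for rank $4$ and first Chern class $\pi^\ast H=E+F$ on $\F$. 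So everything comes down to showing $\lfloor\tfrac38+C_0\rfloor\le 5$.

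Recall that $C_0$ is the supremum of $0$, of the numbers $1+\tfrac12\big(r^2-\sum_{i=1}^lr_i^2\big)-\tfrac12\kappa$ indexed by the ordered compositions $r=r_1+\cdots+r_l$ with $l\ge2$ (here $\kappa$ is the lower bound for the quantity in equation~\ref{lbterm} furnished by Lemma~\ref{lowerbound} for that composition), and of the numbers $\frac{r_1r_2}{2r}+\big(\lceil\tfrac{br_2}{r}\rceil-\tfrac{br_2}{r}\big)$ coming from the exceptional case $l=2$, $\mu_F(\gamma_2)-\mu_F(\gamma_1)=-1$. For $r=4$ the partitions with $l\ge2$ are $3+1$, $2+2$, $2+1+1$, $1+1+1+1$. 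Writing $c_i=r_ia_iE+b_iF$ and normalizing $\tilde c=E+F$ (so $a=b=1$), a nonzero summand forces $r_i\mid c_i\cdot F$ and $\sum_i r_ia_i=1$; hence the partition $2+2$ is vacuous by parity, and among the exceptional $l=2$ terms only $(r_1,r_2)=(1,3)$ survives, contributing $\tfrac38+\tfrac14=\tfrac58$. It remains to pin down $\kappa$ for the surviving compositions.

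For each such partition I would reuse the splitting $S_1+S_2$ of equation~\ref{eqn16} from the proof of Lemma~\ref{lowerbound} and compute the exact integer minimum of $S_1+S_2$ over all orderings and all admissible $(a_i)$, $(b_i)$, rather than using the crude uniform bounds. Since $S_1=(r-1)f(a_1,\dots,a_l)$ with $f$ strictly convex (positive definite Hessian, as in Lemma~\ref{lowerbound2}), the minimum of $S_1$ over $\sum r_ia_i=1$ is attained among the finitely many lattice points with every $a_i$ within $\tfrac12$ of $a/r=\tfrac14$; one finds $S_1\ge-\tfrac34$, with equality essentially only at $(a_1,\dots,a_l)=(0,\dots,0,1)$. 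At such a configuration every adjacent difference $a_i-a_{i-1}$ is $\ge0$, so Case~B of equation~\ref{defnSmuIn} holds at each step; using the sign of the summands of $S_2$ and the integrality of the $s_i$, this forces $\sum_{j\ge i}b_j\le0$, so $S_2$ is minimized at $b=(1,0,\dots,0)$, where
\[
S_2=2\sum_{i=2}^{l}\Big((r-1)(a_i-a_{i-1})+r_i+r_{i-1}\Big)\cdot\tfrac14\!\!\sum_{j=i}^{l}r_j ,
\]
and one checks directly that the remaining orderings and configurations give a strictly larger $S_1+S_2$. This yields $\kappa=\tfrac{11}4$, $\tfrac{19}4$, $\tfrac{27}4$ (attained at $(r_\bullet)=(3,1),(2,1,1),(1,1,1,1)$ with $a_\bullet=(0,1),(0,0,1),(0,0,0,1)$), so $1+\tfrac12(r^2-\sum r_i^2)-\tfrac12\kappa=\tfrac{21}8,\tfrac{29}8,\tfrac{29}8$, and hence $C_0=\max\{0,\tfrac{21}8,\tfrac{29}8,\tfrac58\}=\tfrac{29}8$. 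Therefore $\lfloor\tfrac38+C_0\rfloor=\lfloor 4\rfloor=4\le5$, which proves the proposition (indeed with the slightly stronger bound $c_2\ge N+4$).

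The main obstacle is precisely the joint minimization of $S_1+S_2$: if $S_1$ and $S_2$ are bounded separately — $S_1\ge-r+3-\tfrac4r=-2$ and $S_2\ge0$, or even the sharper $S_2$-bound of equation~\ref{kapval} decoupled from $S_1$ — one only gets $C_0$ of size about $\tfrac{27}4$ and the useless conclusion $c_2\ge N+7$. What must be made rigorous is the coupling: every configuration for which $S_2$ is small forces the $a_i$ to be spread out with many Case~A steps, which in turn makes $S_1$ large, so that the configurations realizing the minimum of $S_1+S_2$ are necessarily the balanced ones $(0,\dots,0,1)$ lying in Case~B throughout. Verifying this — via the convexity argument localizing the $a_i$ followed by a short lattice-point check for each of the three surviving compositions and their orderings — is where essentially all the work lies; once it is in place, the evaluation of the $S_2$'s and of $C_0$ is routine arithmetic.
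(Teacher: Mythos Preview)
Your approach is exactly the paper's: reduce via Theorem~\ref{clm14} with $\mb=0$ to estimating $C_0$ in Proposition~\ref{boundMF}, then case--split over the ordered compositions of $r=4$ and, for each, jointly minimize $S_1+S_2$ over the admissible $(a_i)$ and $(s_i)$. The paper does this by running through all seven ordered compositions $(3,1),(1,3),(2,2),(2,1,1),(1,2,1),(1,1,2),(1,1,1,1)$ one by one and recording the resulting $\kappa$; you instead group by unordered partition and treat only one representative ordering explicitly, deferring the rest to ``one checks directly''. That deferral is the only real gap: for orderings such as $(1,1,2)$ the vector $(0,\dots,0,1)$ does \emph{not} satisfy $\sum r_ia_i=1$, so your blanket statement that the $S_1$--minimizer is $(0,\dots,0,1)$ is not literally correct, and you must actually verify (as the paper does) that the omitted orderings $(1,3),(1,2,1),(1,1,2)$ yield larger $\kappa$ and hence do not increase $C_0$.

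Numerically your computations are sharper than the paper's: for $(2,1,1)$ at $a=(0,0,1)$ one has $S_2=2\big(3\cdot\tfrac12+5\cdot\tfrac14\big)=\tfrac{11}{2}$ (the paper records $4$ here and then $\kappa=\tfrac52$, an arithmetic slip), giving $\kappa=\tfrac{19}{4}$ and contribution $\tfrac{29}{8}$; similarly for $(1,1,1,1)$ you get $\kappa=\tfrac{27}{4}$ and again $\tfrac{29}{8}$. Together with your correct observation that the exceptional $l=2$ term only survives for $(r_1,r_2)=(1,3)$ and contributes $\tfrac58$ (the paper's $\tfrac98$ at $r_2=1$ is vacuous), this yields $C_0=\tfrac{29}{8}$ and hence $\lfloor\tfrac38+C_0\rfloor=4$, which is stronger than the stated bound $5$. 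So your argument, once the missing orderings are written out, both proves the proposition and improves it to $c_2\ge N+4$.
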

\begin{proof}
Following the proof of Theorem \ref{theorem26}, it is enough to show that when $d > N + 5$, the coefficient of $\L^{-N}q^d$ in $q^{\frac{3}{8}}(1 - \L^{-1})(1-q)G_{4,H}(q)$ is zero.

Each term contributing to the coefficient of $\L^{-N}q^{d}$ in $q^{\frac{3}{8}}(1 - \L^{-1})(1-q)G_{4,H}(q)$ arises from a pair of equations 
\begin{align*}
d &= \frac{3}{8} + \Delta' \\
-N &= \varepsilon - N'
\end{align*}
where $\varepsilon \in \lbrace -1,0 \rbrace$ accounts for the contribution to the coefficient coming from $(1 - \L^{-1})$, and $(\Delta',N')$ accounts for the contribution coming from the terms in coefficient of $\L^{-N'}q^{\Delta'}$ in $(1-q)G_{4,H}(q)$.

It follows from Theorem \ref{clm14} that if the co-efficient of $\L^{-N'}q^{\Delta'}$ is non-zero, then we must have $\Delta' \leq N' + C_0$, whence, $d \leq N + \left\lfloor \frac{3}{8} + C_0 \right\rfloor$. Consequently, for $d > N + \left\lfloor \frac{3}{8} + C_0 \right\rfloor$, the coefficient of $\L^{-N}q^{d}$ in $q^{\frac{3}{8}}(1 - \L^{-1})(1-q)G_{4,H}(q)$ must be zero. Therefore, to complete our proof, we need to determine the value of $C_0$.

Adopting the notation used in proof of Proposition \ref{boundMF} and Lemma \ref{lowerbound} in our situation, we get $r = 4$, $a = b = 1$. Recall that $C_0$ is the maximum of the terms $1 + \frac{1}{2}\left( r^2 - \sum_{i=1}^l r_i^2\right) - \frac{1}{2}\kappa$ except the case when $l=2$ and $\mu_F(\gamma_2 ) - \mu_F(\gamma_1) = -1$ and the terms $\frac{r_1 r_2}{2r} + \left( \left\lceil \frac{br_2}{r}\right\rceil - \frac{br_2}{r}\right)$ for $r_1 + r_2 = r$, where $r = \sum_{i=1}^l r_i$, $a = \sum_{i=1}^l r_i a_i$, $s_i = \sum_{j=i}^l b_j$, $b=s_1$, and  $ \kappa$ is lower bound for $S_1 + S_2$, where 
\[ S_1 = (r-1)\sum_{i=1}^l r_i a_i^2 - \frac{r-1}{r}a^2 + \sum_{i=1}^l a_i r_i \left( \sum_{j=i+1}^l r_j - \sum_{j=1}^{i-1} r_j \right) \]
and 
\[ S_1 = 2 \sum_{i=2}^l ( (r-1)(a_i - a_{i-1}) + r_i + r_{i-1}) \left( \frac{b}{r}\sum_{j=i}^l r_j - s_i \right) \]

When $l=2$ and $(r_1,r_2) = (3,1)$, we see that $S_1 \geq - \frac{3}{4}$ with equality occurring at $(a_1,a_2) = (0,1)$. At the point $(0,1)$ we get $S_2 \geq \frac{7}{2}$, and hence, $S_1 + S_2 \geq \frac{11}{4}$. Since there are no other points $(a_1,a_2)$ satisfying $3a_1 + a_2 = 1$ at which $S_1 < \frac{11}{4}$, we can take $\kappa = \frac{11}{4}$, and we get $1 + \frac{1}{2}\left( r^2 - r_1^2 - r_2^2 \right) - \frac{1}{2} \kappa = \frac{21}{8}$.

When $l=2$ and $(r_1,r_2) = (1,3)$, we see that $S_1 \geq \frac{21}{4}$ with equality occurring at $(a_1,a_2) = (1,0)$, and $S_2 \geq 1$. Thus, we can take $\kappa = \frac{25}{4}$, and we get $1 + \frac{1}{2}\left( r^2 - r_1^2 - r_2^2 \right) - \frac{1}{2} \kappa = \frac{7}{8}$.

When $l=2$ and $(r_1,r_2) = (2,2)$, there is no integer solution for $2 a_1 + 2a_2 = 1$. Thus, we ignore this case. 

When $l=3$ and $(r_1,r_2,r_3) = (2,1,1)$, we see that $S_1 \geq - \frac{3}{4}$ with equality occurring at $(a_1,a_2,a_3) = (0,0,1)$. At this point we get $S_2 \geq 4$, whence $S_1 + S_2 \geq \frac{5}{2}$. The only other point $(a_1,a_2,a_3)$ with $S_1 \leq \frac{5}{2}$ is $(0,1,0)$ at which $S_1 = \frac{5}{4}$ and $S_2 \geq \frac{9}{2}$, and thus $S_1 + S_2 \geq \frac{23}{4}$. Therefore, we can take $\kappa = \frac{5}{2}$, and we get $1 + \frac{1}{2}\left( r^2 - r_1^2 - r_2^2 - r_3^2 \right) - \frac{1}{2} \kappa = \frac{19}{4}$.

When $l=3$ and $(r_1,r_2,r_3) = (1,2,1)$, we see that $S_1 \geq - \frac{3}{4}$ with equality occurring at $(0,0,1)$. At this point, we see that $S_2 \geq 6$, whence $S_1 + S_2 \geq \frac{21}{4}$. At every other point $(a_1,a_2,a_3)$ with $a_1 + 2a_2 + a_3 = 1$, we have $S_1 \geq \frac{21}{4}$. As a consequence, we can take $\kappa = \frac{21}{4}$, and we get $1 + \frac{1}{2}\left( r^2 - r_1^2 - r_2^2 - r_3^2 \right) - \frac{1}{2} \kappa = \frac{27}{8}$.

When $l =3$ and $(r_1,r_2,r_3) = (1,1,2)$, we see that $S_1 \geq \frac{5}{4}$ with equality occurring at $(a_1,a_2,a_3) = (-1,0,1)$. At this point, we see that $S_2 \geq 6$, and thus $S_1 + S_2 \geq \frac{29}{4}$. The other points $(a_1,a_2,a_3)$ satisfying $a_1 + a_2 + 2a_3 = 1$ at which $S_1 \leq \frac{29}{4}$ are $(0,1,0), \, (0,-1,1),\, (1,0,0)$. Analyzing $S_1$ and $S_2$ at these points, we see that $S_1 + S_2 $ may attain the least possible value $\frac{25}{4}$. Thus, we take $\kappa = \frac{25}{4}$, and we see that $1 + \frac{1}{2}\left( r^2 - r_1^2 - r_2^2 - r_3^2 \right) - \frac{1}{2} \kappa = \frac{23}{8}$.

When $l=4$ and $(r_1,r_2,r_3,r_4) = (1,1,1,1)$, we see that $S_1 \geq - \frac{3}{4}$ with equality occurring at $(a_1,a_2,a_3,a_4) = (0,0,0,1)$. At this point, we see that $S_2 \geq 6$, and thus $S_1 + S_2 \geq \frac{21}{4}$. The other points $(a_1,a_2,a_3,a_4)$ with $a_1 + a_2 + a_3 + a_4 = 1$ at which $S_1 \leq \frac{21}{4}$ are $(-1,0,0,2)$, $(0,-1,1,1)$, $(-1,1,1,0)$, $(-1,1,0,1)$, $(-1,0,1,1)$, $(1,0,0,0)$, $(0,1,0,0)$, and $(0,0,1,0)$. However, we see that at each of these points we have $S_1 + S_1 \geq \frac{21}{4}$. Hence, we can take $\kappa = \frac{21}{4}$, and we get $1 + \frac{1}{2}\left( r^2 - r_1^2 - r_2^2 - r_3^2 - r_4^2 \right) - \frac{1}{2} \kappa =\frac{35}{8}$. 

Finally, since $b=1$, $r = 4$, and $1 \leq r_2 \leq 3$, we see that $\frac{(r-r_2)r_2}{2r} + 1 - \frac{r_2}{r}$ attains maximum value of $\frac{9}{8}$ at $r_2 = 1$.

In conclusion, we can take $C_0 = \frac{19}{4}$, and we get that when $d > N + 5$ the coefficient of $\L^{-N}q^d$ in $q^{\frac{3}{8}}(1 - \L^{-1})(1-q)G_{4,H}(q)$ is zero.
\end{proof}

In summary, as we see in our examples (Proposition \ref{clm15}, \ref{clm16}), the constant $C_0$ in Proposition \ref{boundMF} can be improved further, which will lead to better bounds for the stabilization of Betti numbers in Theorem \ref{theorem26}.
{\small
\renewcommand{\refname}{\textsc{References}}

{\footnotesize \textsc{Department of Mathematics, Statistics and CS, University of Illinois at Chicago, Chicago, IL, 60607}.\\
\textit{E-mail}: \texttt{smanda9@uic.edu}}}
\end{document}